\crefname{equation}{}{} 
\crefname{enumi}{}{} 
\theoremstyle{plain}
\newtheorem{lemma}{Lemma}[section]
\newtheorem{proposition}[lemma]{Proposition}
\newtheorem{theorem}[lemma]{Theorem}
\newtheorem{corollary}[lemma]{Corollary}
\theoremstyle{definition}
\newtheorem{definition}[lemma]{Definition}
\theoremstyle{remark}
\newtheorem{remark}[lemma]{Remark}
\numberwithin{equation}{section}
\newcommand{\bmo}{\mathrm{BMO}}
\newcommand{\loc}{\mathrm{loc}}
\newcommand{\R}{\mathbb{R}}
\newcommand{\N}{\mathbb{N}}
\newcommand{\eps}{{\varepsilon}}
\DeclareMathOperator{\supp}{supp}
\DeclareMathOperator{\dive}{div}
\DeclareMathOperator{\curl}{curl}
\DeclareMathOperator*{\esssup}{ess\,\sup}
\newcommand{\pt}{\partial_t}
\newcommand{\abs}[1]{\left\lvert#1\right\rvert}
\newcommand{\norm}[1]{\left\lVert#1\right\rVert}
\newcommand\skal[1]{\left\langle#1\right\rangle}
\renewcommand{\d}{\mathrm{d}}
\renewcommand{\div}{\operatorname{div}}
\newcommand{\dd}{\,\mathrm{d}}
\newcounter{eq}[section]
\newcommand{\mynorm}{{\vert\kern-0.25ex\vert\kern-0.25ex\vert}}
\newcommand{\ie}{{\itshape i.e.}}
\newcommand{\eg}{{\itshape e.g.}}
\newcommand\blank{{{}\cdot{}}}
\begin{document}

\title[]{Regularity aspects of Leray--Hopf solutions to the 2D inhomogeneous Navier--Stokes system and applications to weak-strong uniqueness}

\author[T.~Crin-Barat]{Timoth\'ee~Crin-Barat} 
\address[T.~Crin-Barat]{Université Paul Sabatier,  Institut de Math\'ematiques de Toulouse, Route de Narbonne 118, 31062 Toulouse Cedex 9, France.}
\email{timothee.crin-barat@math.univ-toulouse.fr}

\author[N.~De~Nitti]{Nicola~De~Nitti}
\address[N.~De~Nitti]{EPFL, Institut de Mathématiques, Station 8, 1015 Lausanne, Switzerland.}
\email{nicola.denitti@epfl.ch}

\author[S.~\v{S}kondri\'c]{Stefan~\v{S}kondri\'c} 
\address[S.~\v{S}kondri\'c]{Friedrich-Alexander-Universit\"at Erlangen-N\"urnberg, Department of Mathematics, Chair of Analysis, Cauerstra{\ss}e 11, 91058 Erlangen, Germany.}
\email{stefan.skondric@fau.de}

\author[A.~Violini]{Alessandro~Violini}
\address[A.~Violini]{Universit\"at Basel,  Department Mathematik und Informatik, Spiegelgasse 1, 4051 Basel, Switzerland. }
\email{alessandro.violini@unibas.ch}

\keywords{Inhomogeneous Navier--Stokes system; regularity of solutions; Leray--Hopf solutions; uniqueness.}
	
\subjclass[2020]{76D03, 76D05.}

\begin{abstract}
We characterize the Leray--Hopf solutions of the 2D inhomogeneous Navier--Stokes system that become strong for positive times. This characterization relies on the strong energy inequality and the regularity properties of the pressure. As an application, we establish a weak–strong uniqueness result and provide a unified framework for several recent advances in the field. 
\end{abstract}

\maketitle

\section{Introduction}
\label{sec:intro}

\subsection{Presentation of the problem}
\label{ssec:problem}

We consider the \emph{inhomogeneous incompressible Navier--Stokes system}, which describes incompressible flows with nonconstant densities (see~\cite[Chapter 1]{LionsVol11996}):
\begin{align}\label{eq:ns}
\begin{cases}
\pt (\rho u) +\div (\rho u \otimes u)-\nu \Delta u+\nabla P=0, &  (t,x) \in (0,\infty) \times \R^d, \\
\pt \rho +\div (\rho u )=0, & (t,x) \in (0,\infty) \times \R^d, \\
\operatorname{div} u=0, & (t,x) \in (0,\infty) \times \R^d, \\
\rho(0,x) = \rho_0(x), & x \in \R^d,\\
u(0,x) = u_0(x), & x \in \R^d,
\end{cases}
\end{align} 
where $\rho \geq 0 $ is the \emph{density function}, $P$ is the \emph{pressure}, $u$ is the \emph{velocity field} and $\nu$ is the \emph{diffusivity} of the fluid. System \cref{eq:ns} satisfies the following scaling invariance: if $(\rho, u, P)$ is a solution of \cref{eq:ns} with initial data $\left(\rho_0, u_0\right)$ then, for all $\lambda>0$, the rescaled triplet 
\begin{align}\label{eq:scalingnse}
\left(\rho_\lambda, u_\lambda, P_\lambda\right) \coloneqq \left(\rho\left(\lambda^2 t, \lambda x\right), \lambda u\left(\lambda^2 t, \lambda x\right),  \lambda^2 P\left(\lambda^2 t, \lambda x\right)\right)
\end{align}
is a solution of \cref{eq:ns} with initial data $(\rho_0(\lambda \blank), \lambda u_0(\lambda \blank))$. Furthermore, provided that a solution $(\rho, u, P)$ is sufficiently regular, the solution also conserves the energy: for every $t \in (0,\infty)$
\begin{align*}
    \frac{1}{2} \int_{\R^d} \rho(t)\abs{u(t,x)}^2 \dd x + \nu \int_0^t \int_{\R^2} \abs{\nabla u(s,x)}^2 \dd x \dd s = \frac{1}{2} \int_{\R^d} \rho_0(x)\abs{u_0(x)}^2 \dd x.
\end{align*}
Scale invariance suggests that the critical requirements for well-posedness are
$\rho_0 \in L^{\infty}(\mathbb{R}^d)$ and $u_0 \in \dot{H}^{\frac{d}{2}-1}(\mathbb{R}^d)$.
In this work, we focus on the two-dimensional case ($d=2$) with a strictly positive, bounded initial density and an $L^2$ divergence-free initial velocity:
\begin{equation}\label{ass:data}
\begin{aligned} 
    & 0 < c_0 \leq \rho_0(x) \leq C_0 < \infty \quad \text{for a.e.~$x \in \R^2$},\\
    &u_0 \in  L^2_\sigma(\R^2).
\end{aligned}
\end{equation}
For $d=2$, in \cite{Kazhikhov1974}, Kazhikhov established the existence Leray--Hopf weak solution for initial data \cref{ass:data} and, in~\cite{Simon1990}, Simon extended this result to the case $\rho_0 \ge 0$ (\ie, allowing for possible vacuum in the initial density).\footnote{~In \cite{LionsVol11996}, Lions demonstrated that the density is a renormalized solution of the mass equation and considered the cases of density-dependent viscosity coefficients and unbounded densities as well.}  
The uniqueness of Leray--Hopf weak solution of \cref{eq:ns} remains an open question (while, in the constant density case, for $d=2$, it  was established by Ladyzhenskaya, Lions, and Prodi (see~\cite{Ladyzhenskaya1959,LionsProdi1959}). However, well-posedness results were proven assuming additional regularity on the initial data. In~\cite{Danchin2003,Danchin2004}, Danchin proved that, if $u_0 \in \dot{B}_{2,1}^{0}(\R^2)$, then \cref{eq:ns} is globally well-posed provided $\rho_0$ is close to some positive constant in the homogeneous Besov space $\dot{B}_{2,1}^{1}(\R^2)$. In~\cite{AbidiGui2021}, Abidi and Gui  proved a similar result for $u_0 \in \dot{B}_{2,1}^0\left(\R^2\right)$ and without the smallness condition but requiring  $\rho_0^{-1}-1 \in \dot{B}_{2\slash \varepsilon,1}^\varepsilon(\R^2)$ for $\varepsilon \in (0,1).$ For merely bounded initial density, bounded away from zero, well-posedness results were first obtained by Danchin and Mucha in~\cite{DanchinMucha2013} and then improved by Paicu, Zhang, and Zhang in~\cite{PaicuZhangZhang2013}, where $u_0\in H^s\left(\mathbb{R}^2\right)$ for some $s>0$. Recently, in~\cite{HaoShaoWeiZhang2024}, Hao, Shao, Wei, and Zhang discussed the existence and uniqueness of solutions arising from initial data satisfying \cref{ass:data}. Additionally, in~\cite{AdogboMuchaSzlenk2024}, Adogbo, Mucha, and Szlenk constructed unique strong solutions for unbounded densities lying within a subspace of BMO.

We recall that a Leray--Hopf weak solution with initial data satisfying \cref{ass:data} is a distributional solution of \cref{eq:ns} that is energy admissible, meaning it satisfies the following energy inequality (see \cref{def:lerayhopfsol}):
\begin{align}\label{eq:enInequality}
    \frac{1}{2}\int_{\R^2} \rho(t) |u(t)|^2 \,\dd x + \nu \int_0^t \int_{\R^2} |\nabla u(s)|^2 
    \,\dd x \,\dd s \leq  \frac{1}{2}\int_{\R^2} \rho_0 |u_0|^2 \,\dd x \quad 
    \text{for every $t \in (0,\infty)$}.
\end{align}
Compared to the homogeneous model, several problems arise in the inhomogeneous case when analyzing Leray--Hopf weak solutions: 
\begin{itemize}[left=1pt]
\item No control of the pressure is guaranteed for Leray--Hopf weak solutions to \cref{eq:ns}: it is not even clear whether the pressure is a distribution. Unlike the homogeneous case, the regularity of the pressure cannot be recovered using Calderón–Zygmund-type estimates (as done in \cite[Theorem 1.13]{BedrossianVicol2022}). Indeed, the pressure $P$, at least for smooth triplet $(\rho,u,P)$ satisfying \cref{eq:ns}, solves
\begin{align}\label{eq:pressurerelation1}
    -  \div \left( \frac{\nabla P}{\rho} \right) = \div \div (u \otimes u) - \div\left( \frac{\nu \Delta u}{\rho}\right),
\end{align}
or, alternatively, the pressure can be recovered from the relation  
\begin{align}\label{eq:pressurerelation2}
    -  \Delta P = \div \div ( \rho u \otimes u) + \div\left( \rho \partial_t u \right).
\end{align}
In both \eqref{eq:pressurerelation1} and \eqref{eq:pressurerelation2}, the interaction between $P$ and $\partial_t u$ or $\Delta u$ makes it difficult to deduce any kind of regularity for $P$ when $\rho$ is merely bounded.

\item Secondly, it remains challenging to establish a strong version of the energy inequality \eqref{eq:enInequality}. In the homogeneous case, applying Ladyzhenskaya's inequality,\footnote{~That is, Gagliardo--Nirenberg's interpolation inequality \[ \|D^{j}u\|_{L^{p}(\mathbb {R} ^{d})}\leq C\|D^{m}u\|_{L^{r}(\mathbb {R} ^{d})}^{\theta }\|u\|_{L^{q}(\mathbb {R} ^{d})}^{1-\theta },\] with $d=2$, $j=0$, $m=1$, $p = 4$, $q = 2$, $r=2$,  and $\theta = 1/2$.\label{ft:gn}} we deduce 
    \[
        u \in L^4((0,\infty) \times \mathbb{R}^2),
    \]
    and conclude that \( u \) satisfies the energy equality (see \cite[Theorem 4.1]{Galdi2000}). The strategy is to test the weak formulation of the momentum equation using a spatial mollification of the velocity.   However, in the inhomogeneous case, this approach cannot be implemented as it is unclear whether \( u \), or any spatially mollified regularization of \( u \), can be used as a test function in the weak formulation of \cref{eq:ns}.\footnote{~The main difficulty here stems from the lack of sufficient time regularity for the velocity field \( u \). This issue arises because \( u \) is coupled with the density \( \rho \), preventing effective use of the divergence-free condition.} As a result, it remains an open question whether the same conclusion holds for non-constant densities.  In particular, it is not even known if every Leray--Hopf solution \( (\rho, u) \) satisfies the strong energy inequality: for almost every \( s > 0 \) and every \( t > s \),
    \begin{align} \label{SEI}
        \frac{1}{2} \int_{\R^2} \rho(t) \abs{u(t)}^2 \,\dd x
        + \nu \int_s^t \int_{\R^2} \abs{\nabla u(\tau)}^2 \,\dd x \,\dd \tau
        \leq \frac{1}{2} \int_{\R^2} \rho(s) \abs{u(s)}^2 \,\dd x.
    \end{align}

\item Due to the lack of a strong energy inequality, it is unclear whether there are any parabolic regularization effects. Given $u_0 \in H^k(\R^2)$, with $k \in \N$, there exists a strong solution $u$ of the homogeneous Navier--Stokes equations such that 
\begin{align*}
    u \in L^\infty((0,\infty); H^k(\R^2)) \cap L^2((0,\infty); \dot{H}^{k+1}(\R^2))
\end{align*}
(see \cite[Theorem 7.1]{RobinsonRodrigoSadowski2016}). In addition, for a Leray--Hopf solution, we have that $u(t) \in H^1(\R^2)$ for almost every $t > 0$. This implies the existence of a strong solution $\Tilde{u}_t$ with initial data $u(t) \in H^1(\R^2)$. By uniqueness, $\widetilde{u}_t$ coincides with a $t$-time translation of $u$, leading to a gain in regularity for $u$ for positive times:
\begin{align*}
    u \in L^\infty((t,\infty); H^1(\R^2)) \cap L^2((t,\infty); \dot{H}^{2}(\R^2)).
\end{align*}
 By the freedom in the choice of $t$ and iterating this procedure, we ultimately obtain 
\begin{align*}
    u \in C^\infty((0,\infty) \times \R^2)
\end{align*}
(see \cite[Theorem 7.5]{RobinsonRodrigoSadowski2016}). However, this result cannot be extended to Leray--Hopf solutions $(\rho,u)$ of \cref{eq:ns} with initial data satisfying \eqref{ass:data}. Although it is true that
\begin{align*}
    u(t) \in H^1(\R^2) \quad \text{for almost every $t > 0$,}
\end{align*}
and that there exists (owing to \cite{PaicuZhangZhang2013}) a strong solution $(\widetilde{\rho}_t, \widetilde{u}_t)$ with initial data $(\rho(t), u(t))$ such that
\begin{align}\label{eq:PZZsolintro}
    \widetilde{u}_t \in L^\infty((0,\infty); H^1(\R^2)) \cap L^2((0,\infty); \dot{H}^2(\R^2)),
\end{align}
it is not clear how to conclude that $(\widetilde{\rho}_t, \widetilde{u}_t)$ is equal to a $t$-time translation of $(\rho, u)$.  The main obstacle is that the weak-strong uniqueness argument (as presented, \eg, in \cite[Theorem 1.6]{CrinBaratSkondricViolini2024}) requires that $(\rho, u)$ is a Leray--Hopf solution on $[t,\infty) \times \R^2$. In particular, we need the energy inequality for $(\rho,u)$ in $[t,\infty)$ and, as already discussed, this is not guaranteed for every Leray--Hopf weak solution.

\end{itemize}

In our main result, \cref{thm:main2D}, we provide a connection between these three aspects.

\subsection{Main results}
\label{ssec:main}

In this paper, we introduce the class of \textit{immediately strong solutions} which is the particular subclass of Leray--Hopf weak solutions that become strong for any positive time. Our goal is to characterize this class and derive a panoply of weak-strong uniqueness results. We say that a Leray--Hopf solution $(\rho, u)$ is an \textit{immediately strong solution}  to \cref{eq:ns} with initial data in \cref{ass:data}  if
\begin{align}\label{eq:immBound}
    \partial_t u, \, \nabla^2 u \in L^2((\eps, \infty);\, L^2(\R^2)) \qquad \text{for every $\eps >0$}
\end{align}
(see \cref{def:Immstrongsol}).
In our first result, we establish a connection between the regularity properties, energy behavior, the existence of a suitable pressure, and the decay at infinity for this class of solutions.

\begin{theorem} \label{thm:main2D}
    Let us assume that the initial data $(\rho_0,u_0)$ satisfy \cref{ass:data} and let $(\rho,u)$ be a Leray--Hopf solution of \cref{eq:ns}. The following statements are equivalent:
    \begin{enumerate}[label=(\roman*)]
       \item \label{it:immediately} $(\rho,u)$ is immediately strong (in the sense of \cref{def:Immstrongsol});
        \item \label{it:strong} $(\rho,u)$ satisfies the strong energy inequality (in the sense of \cref{def:energies});
        \item  \label{it:A1A2A3} There exists a constant 
    $C=C(\nu,\norm{u_0}_{L^2},c_0,C_0)$ such that 
    \begin{align*}
        A_1^0(u), \, A_2^0(u), \, A_3^0(u) \leq C,
    \end{align*}
    where $A^0_i$, for $i \in \{1,2,3\}$, is defined in \cref{def:IS};
      \item \label{it:pressure} There exists an associated pressure $P$ (in the sense of \cref{def:assPress}) such that
        \begin{align*}
                   P \in L^2_\loc((\varepsilon,+\infty);\,\bmo(\R^2)) \cap L^2_\loc((\varepsilon, +\infty) \times \R^2) \qquad \text{for every $\varepsilon >0$}.
        \end{align*}
    \end{enumerate}
\end{theorem}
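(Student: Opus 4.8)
The plan is to establish the cycle of implications \emph{(i)} $\Rightarrow$ \emph{(iv)} $\Rightarrow$ \emph{(ii)} $\Rightarrow$ \emph{(i)} among the three qualitative statements, and then to obtain the quantitative characterization \emph{(i)} $\Leftrightarrow$ \emph{(iii)} as a refinement of the first. For \emph{(i)} $\Rightarrow$ \emph{(iv)} I would first recover the pressure: since $u$ solves the momentum equation in the sense of distributions, the field $F\coloneqq \nu\Delta u-\rho\,\pt u-\rho\,u\cdot\nabla u$ annihilates every divergence-free test function, so by the de Rham decomposition there is a distribution $P$ with $\nabla P=F$, which is the associated pressure of \cref{def:assPress}. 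On $(\eps,\infty)$ the bounds \eqref{eq:immBound} together with $\rho\in L^\infty$ give $\Delta u,\ \rho\,\pt u\in L^2_{\loc}$, and since $H^2(\R^2)\hookrightarrow L^\infty(\R^2)$ the tensor $\rho\,u\otimes u$ lies in $L^\infty_{\loc}$. Writing $P$ through \eqref{eq:pressurerelation2} as $P=R_iR_j(\rho u_iu_j)+(-\Delta)^{-1}\div(\rho\,\pt u)$, with $R_i$ the Riesz transforms, the Calderón--Zygmund bound $R_iR_j\colon L^\infty\to\bmo$ handles the convective part while the second term is controlled in $L^2_{\loc}$ by $\rho\,\pt u\in L^2_{\loc}$, yielding the claimed $\bmo\cap L^2$ local-in-time integrability.

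For \emph{(iv)} $\Rightarrow$ \emph{(ii)} — the conceptual core — the point is that, with a pressure of the stated regularity now available, one can justify testing the momentum equation against a mollification of $u$ itself on a window $[s,t]$. The term that previously obstructed the energy balance is the pressure term, and the regularity $P\in L^2_{\loc}((\eps,\infty)\times\R^2)$ makes $\int \nabla P\cdot u$ well defined and, by incompressibility, equal to $-\int P\,\div u=0$, while the $\bmo$ control supplies the integrability needed to pass to the limit in the mollified convective term. Carrying out this computation from an arbitrary admissible starting time $s$ produces precisely the strong energy inequality \eqref{SEI}, that is, \cref{def:energies}.

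For \emph{(ii)} $\Rightarrow$ \emph{(i)} I would run the bootstrap previewed in the introduction: the strong energy inequality makes $(\rho,u)$ a Leray--Hopf solution on $[s,\infty)$ for almost every $s>0$, and since $u(s)\in H^1(\R^2)$ for almost every such $s$, the strong-solution theory of \cite{PaicuZhangZhang2013} furnishes $(\widetilde\rho_s,\widetilde u_s)$ with the regularity \eqref{eq:PZZsolintro}; weak--strong uniqueness as in \cite[Theorem 1.6]{CrinBaratSkondricViolini2024}, now applicable because the energy inequality holds on $[s,\infty)$, identifies $u$ with the $s$-translate of $\widetilde u_s$, and letting $s\to0$ along admissible times gives \eqref{eq:immBound}. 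The equivalence \emph{(i)} $\Leftrightarrow$ \emph{(iii)} is then handled separately: the quantities $A_1^0,A_2^0,A_3^0$ of \cref{def:IS} are quantitative norms measuring exactly the objects in \eqref{eq:immBound} (and the associated pressure), so \emph{(iii)} $\Rightarrow$ \emph{(i)} is immediate, whereas \emph{(i)} $\Rightarrow$ \emph{(iii)} requires the \emph{uniform} bound by $C(\nu,\norm{u_0}_{L^2},c_0,C_0)$. This I would derive from the a priori estimates for the Stokes system—testing with $\pt u$ (the pressure term drops since $\div\pt u=0$) to control $\nabla u$ in $L^\infty_tL^2$ and $\sqrt\rho\,\pt u$ in $L^2_{t,x}$, then invoking maximal elliptic regularity for the stationary Stokes operator to bound $\nabla^2u$ and $\nabla P$—closing the nonlinear terms via Ladyzhenskaya's inequality in dimension two.

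The main obstacle I anticipate is \emph{(iv)} $\Rightarrow$ \emph{(ii)}: converting merely $L^2_{\loc}$–$\bmo$ pressure regularity into a rigorous energy identity is precisely the step that fails for a generic Leray--Hopf solution, since $u$ lacks the time regularity to be an admissible test function. Making the pressure term vanish cleanly in the limit, and controlling the commutator arising from mollifying the convective term $\rho\,u\cdot\nabla u$, is where the genuine analytic work concentrates; every other implication essentially assembles the pressure construction, Calderón--Zygmund theory, and the known strong-solution and weak--strong-uniqueness results into the stated equivalence.
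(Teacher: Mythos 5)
Your overall architecture — the cycle among the qualitative statements, with \cref{it:strong} $\Rightarrow$ \cref{it:immediately} obtained from the semi-flow property, the $H^1$ theory of \cite{PaicuZhangZhang2013}, and weak--strong uniqueness — coincides with the paper's, and you correctly single out \cref{it:pressure} $\Rightarrow$ \cref{it:strong} as the hard step. But precisely there your argument has a genuine gap. You claim the pressure contribution vanishes because $\int \nabla P\cdot u = -\int P\,\div u = 0$ ``by incompressibility.'' That integration by parts is over all of $\R^2$, and nothing in \cref{it:pressure} makes $Pu$ integrable at infinity; this is exactly the obstruction that the $\bmo$ hypothesis is there to remove, and you instead assign the $\bmo$ bound to the convective term, where it plays no role (the convective flux is handled by the $L^p$ bounds on $u$ from the energy inequality). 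The paper's \cref{lem:StrongIneq} localizes with $\varphi_n(x)=\varphi(x/n)$, uses $\div u=0$ to replace $P$ by $P-\fint_{B(0,n)}P$ in the flux term $\int P\,u\cdot\nabla\varphi_n$, and then the $\bmo$ norm gives $\|P-\fint_{B(0,n)}P\|_{L^2(B(0,n))}\lesssim n\|P\|_{\bmo}$, which exactly cancels the $1/n$ from $\nabla\varphi_n$; without this mechanism the pressure term does not close. Likewise, the preliminary ``suitability'' step cannot be done by testing with a mollification of $u$ alone: the lack of time regularity of $u$ forces one to mollify the equation and multiply by $(\rho u)_\gamma/\rho_\gamma$, with DiPerna--Lions commutator estimates (\cref{lem:commutator1}) controlling the error; you flag this difficulty but do not resolve it.

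Two further points. For \cref{it:immediately} $\Rightarrow$ \cref{it:pressure} your Riesz-transform decomposition $P=R_iR_j(\rho u_iu_j)+(-\Delta)^{-1}\div(\rho\,\pt u)$ is a different route from the paper's, which instead realizes $P$ as the potential of the curl-free field $G=-\rho\dot u+\nu\Delta u\in L^2\cap L^4$ via the anti-gradient operator of \cref{prop:antigradop}; your route leaves open the identification of the singular-integral solution of \cref{eq:pressurerelation2} with the de Rham pressure (the Poisson equation determines $P$ only up to harmonic functions, and $R_iR_j$ on $L^\infty$ is defined only modulo constants), so some decay or normalization argument is missing. Finally, for \cref{it:immediately} $\Rightarrow$ \cref{it:A1A2A3}, testing with $\pt u$ and Stokes regularity cannot by itself produce the $A_3^0$ bound, which involves $\nabla^2\dot u$, $\nabla\dot P$ and $\ddot u$, and in any case these computations are not justified directly on a weak solution; the paper instead invokes the construction of \cite{Danchin2024} with these bounds built in and transfers them to $(\rho,u)$ by weak--strong uniqueness applied to the time translates.
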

\begin{remark}[{BMO regularity for the pressure}]
The pressure gradient of strong solutions belongs to \( L^2(\R^2) \), and by the critical Sobolev embedding, see \cref{lem:embbmo}, this ensures that the pressure is in \( \mathrm{BMO}(\R^2) \). Theorem \ref{thm:main2D} establishes the converse: if the pressure is in \( \mathrm{BMO}(\R^2) \), then the pressure gradient is in \( L^2(\R^2) \), which makes it a natural regularity condition.
\end{remark}

We then turn to the question of the existence of solutions satisfying one of the equivalent conditions in \cref{thm:main2D}. Every Leray--Hopf weak solution produced by a smooth approximation argument is an immediately strong solution. In particular, for all initial data satisfying \cref{ass:data}, there exists an immediately strong solution of \cref{eq:ns}.

\begin{proposition}[Existence of immediately strong weak solution]\label{lem:exist}
    Let us assume that $(\rho_0,u_0)$ satisfies \cref{ass:data}. Then there exists an immediately strong solution $(\rho,u)$ of \cref{eq:ns}. 
\end{proposition}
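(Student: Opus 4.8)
The plan is to construct an immediately strong solution by a smooth approximation scheme and show that the standard parabolic estimates for the approximants survive the limit, at least on time intervals bounded away from the initial time. First I would regularize the initial data: smooth the velocity $u_0$ to $u_0^n \in H^\infty_\sigma(\R^2)$ with $u_0^n \to u_0$ in $L^2$, and smooth the density while preserving the two-sided bounds, taking $\rho_0^n$ with $c_0 \leq \rho_0^n \leq C_0$ and $\rho_0^n \to \rho_0$ in an appropriate sense (e.g.\ $L^p_\loc$ and weak-$*$ in $L^\infty$). For this regularized data one obtains (via \cite{PaicuZhangZhang2013} or a Friedrichs/Galerkin scheme) a sequence of strong solutions $(\rho^n, u^n)$ satisfying the energy equality and enjoying the parabolic smoothing \eqref{eq:PZZsolintro}.

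The heart of the argument is to establish estimates on $\partial_t u^n$ and $\nabla^2 u^n$ that are uniform in $n$ on each interval $(\eps,\infty)$ and hence produce the bound \eqref{eq:immBound} in the limit. The natural strategy is to exploit the fact that, by the energy equality, $u^n(t) \in H^1(\R^2)$ with $\nu\int_0^\infty \norm{\nabla u^n}_{L^2}^2 \leq \tfrac{1}{2}C_0\norm{u_0^n}_{L^2}^2$ bounded uniformly; therefore for each $n$ there is a set of times of full measure at which $\norm{\nabla u^n(t)}_{L^2}$ is finite, and one would like a differential inequality controlling the growth of $\norm{\nabla u^n(t)}_{L^2}^2$ that forces it to be bounded on $(\eps,\infty)$ by a constant depending only on $\eps, \nu, \norm{u_0}_{L^2}, c_0, C_0$ and not on $n$. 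I would obtain this by testing the momentum equation for the approximants with $\partial_t u^n$ (or with $-\Delta u^n$ after dividing by $\rho^n$), using Ladyzhenskaya's inequality to control the nonlinear term $\div(\rho^n u^n \otimes u^n)$, and absorbing the resulting quartic term in $\norm{\nabla u^n}_{L^2}$ via the already-integrable $L^2_t L^2_x$ bound on $\nabla u^n$; a Gr\"onwall-type argument combined with the integrability in time then upgrades the almost-everywhere finiteness into a uniform $L^\infty((\eps,\infty))$ bound on $\norm{\nabla u^n(t)}_{L^2}$, from which the uniform $L^2((\eps,\infty);L^2)$ bounds on $\partial_t u^n$ and $\nabla^2 u^n$ follow.

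With these uniform-in-$n$ bounds in hand, the final step is a compactness/limit-passage argument: the uniform energy bound gives $u^n \weaks u$ in the relevant spaces, the density estimates give $\rho^n \weaks \rho$ with the two-sided bounds preserved, and standard Aubin--Lions compactness (using the time-derivative control) yields strong convergence of $u^n$ sufficient to pass to the limit in the nonlinear term $\rho^n u^n \otimes u^n$. The limit $(\rho,u)$ is then a Leray--Hopf solution of \eqref{eq:ns}, and the uniform estimates \eqref{eq:immBound} are inherited by weak lower semicontinuity of the norms, so $(\rho,u)$ is immediately strong in the sense of \cref{def:Immstrongsol}.

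The main obstacle I anticipate is making the differential inequality for $\norm{\nabla u^n(t)}_{L^2}^2$ genuinely uniform in $n$ while only assuming merely bounded density. The coupling of $u$ with $\rho$ obstructs the clean manipulations available in the homogeneous case (precisely the difficulty flagged in the introduction around \eqref{eq:pressurerelation1}--\eqref{eq:pressurerelation2}): one must control terms involving $\partial_t u^n$ tested against itself with the weight $\rho^n$, and the pressure enters the estimate. The delicate point is to arrange the testing so that the pressure term either vanishes by incompressibility or is controlled, and so that no derivative of $\rho^n$ appears with a constant blowing up as the regularization is removed; getting the constant in the Gr\"onwall argument to depend only on $\nu, \norm{u_0}_{L^2}, c_0, C_0$ (and on $\eps$), and not on higher norms of $\rho_0^n$, is where the real work lies.
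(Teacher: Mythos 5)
Your outline is correct and is essentially the paper's approach: the paper proves \cref{lem:exist} by invoking \cite[Theorem 1.1]{Danchin2024}, whose proof is exactly the smooth-approximation scheme you describe (regularized data, strong solutions via \cite{PaicuZhangZhang2013}, uniform time-weighted energy estimates obtained by testing with $\partial_t u^n$ so that the pressure drops out by incompressibility and no derivatives of $\rho^n$ appear, then compactness and weak lower semicontinuity). The uniform-in-$n$ weighted bounds that you flag as ``where the real work lies'' are precisely the quantities $A_1^0$, $A_2^0$, $A_3^0$ of \cref{def:IS} established in \cite{Danchin2024}, so your proposal correctly identifies, but does not itself close, the one step the paper delegates to that reference.
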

 \cref{lem:exist} follows directly from the existence result established in \cite[Theorem 1.1]{Danchin2024}: indeed, the bounds derived therein imply \eqref{eq:immBound}. 
 
 \begin{remark}
    The existence of solutions that fail to satisfy one (and, therefore, all) of the properties mentioned in \cref{thm:main2D} remains an open question.
 \end{remark}

Finally, as a corollary of \cref{thm:main2D}, we observe that $u$ actually satisfies the energy equality, provided that the pressure satisfies the regularity assumption in \cref{it:pressure} up to time $t=0$.

\begin{corollary}[Energy equality]\label{cor:equality}
    Let us assume that the initial data $(\rho_0,u_0)$ satisfy \cref{ass:data} and let 
    $(\rho,u)$ be a Leray--Hopf solution of \cref{eq:ns}. Let us assume, in addition, that 
    \begin{align}
        P \in L^2_\loc([0,+\infty); \, \bmo(\mathbb{R}^2)) \cap L^2_\loc([0, +\infty) \times \mathbb{R}^2).
    \end{align}
    Then $u$ satisfies the energy equality.
\end{corollary}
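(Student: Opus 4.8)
The plan is to deduce the energy equality from \cref{thm:main2D} combined with the additional hypothesis that the pressure regularity holds up to $t=0$. First I would invoke the equivalence $\ref{it:pressure} \Leftrightarrow \ref{it:immediately}$ in \cref{thm:main2D}: the assumed bound $P \in L^2_\loc([0,+\infty);\bmo(\R^2)) \cap L^2_\loc([0,+\infty)\times\R^2)$ is stronger than the condition in \ref{it:pressure} (it includes the endpoint $t=0$, hence in particular holds for every $\varepsilon>0$), so $(\rho,u)$ is immediately strong. Thus $\partial_t u, \nabla^2 u \in L^2((\varepsilon,\infty);L^2(\R^2))$ for every $\varepsilon>0$, and the solution is regular on $(0,\infty)$.

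The key point is that the extra control up to $t=0$ should upgrade the immediately-strong bounds so that they are uniform down to the initial time, i.e.\ yield $\partial_t u, \nabla^2 u \in L^2((0,T);L^2(\R^2))$ and $\nabla P \in L^2((0,T)\times\R^2)$ for each finite $T$, rather than merely on $(\varepsilon,T)$. With this integrability in hand, I would justify using $u$ itself as a test function in the weak formulation of the momentum equation on $[0,T]$. Concretely, pairing the equation $\pt(\rho u)+\div(\rho u\otimes u)-\nu\Delta u+\nabla P=0$ against $u$, the viscous term gives $\nu\|\nabla u\|_{L^2}^2$, the pressure term vanishes after integration by parts using $\div u=0$ together with $P\in\bmo$ and $\nabla P\in L^2$ to control the boundary/decay contributions, and the convective term $\int \div(\rho u\otimes u)\cdot u$ cancels once the transport structure and $\div u = 0$ are exploited. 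The transport identity for $\rho$ then converts the remaining time-derivative terms into the exact time derivative of the kinetic energy $\tfrac12\int\rho|u|^2$, producing the equality rather than the inequality.

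The main obstacle will be rigorously justifying that $u$ is an admissible test function and that each formal integration by parts is legitimate at the level of Leray--Hopf regularity near $t=0$. The subtlety flagged in the introduction is precisely the lack of time regularity for $u$ when coupled to a merely bounded density; here that difficulty is resolved because the pressure hypothesis forces $\pt u \in L^2((0,T);L^2)$ down to the initial time, which supplies the missing temporal integrability. I would make the pairing rigorous by a mollification-in-time argument or by approximating $u$ with admissible test functions and passing to the limit, using the established $L^2$-bounds on $\pt u$, $\nabla^2 u$, and $\nabla P$ to control all terms. The product $\rho u$ has an absolutely continuous representative with values in $L^2$, so evaluating the kinetic energy at $t=0$ recovers exactly the initial energy $\tfrac12\int\rho_0|u_0|^2$, closing the equality. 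The one delicate estimate is handling the pressure term: I would use the duality between $\bmo$ and the Hardy space, or directly the $L^2$ integrability of $\nabla P$ paired against $u\in L^2$ with $\div u=0$, to show it contributes nothing.
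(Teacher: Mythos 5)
There is a genuine gap in the middle of your argument. Your "key point" --- that the pressure control up to $t=0$ upgrades the immediately-strong bounds to $\partial_t u,\,\nabla^2 u \in L^2((0,T);L^2(\R^2))$ and $\nabla P \in L^2((0,T)\times\R^2)$ --- is asserted without any mechanism and cannot be correct in the stated generality. By \cref{thm:pzz}, having $\partial_t u,\,\nabla^2 u \in L^2((0,\infty);L^2(\R^2))$ is \emph{equivalent} to $u_0 \in H^1(\R^2)$, which is not assumed in \cref{cor:equality}: the corollary is meant for $u_0 \in L^2_\sigma$ only, and the weights $s,\,s^2,\,s^3$ in $A_1^0,\,A_2^0,\,A_3^0$ degenerate at $s=0$ precisely because no such unweighted bound is available for merely $L^2$ data. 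If your claimed upgrade were true, the corollary would reduce to \cref{lem:energyeq1} for strong solutions, but nothing in \cref{thm:main2D} or in the pressure hypothesis produces it. Once this step fails, the rest of your plan (testing the momentum equation against $u$ itself, controlling the pressure via $\nabla P \in L^2$) has no foundation; indeed, testing with $u$ or a mollification of $u$ is exactly the strategy that \cref{ssec:problem} flags as unavailable in the inhomogeneous setting, because the coupling with a merely bounded $\rho$ destroys the temporal regularity needed to make $u$ admissible near $t=0$.

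The paper's proof takes a different and much lighter route that never needs $\partial_t u$ near $t=0$: one reruns the two lemmas of \cref{ssec:pr1} with test functions $\varphi \in C^\infty_c([0,\infty)\times\R^2)$ that are allowed to be nonzero at the origin of time. Concretely, the suitability argument of \cref{lem:suitable} (spatial mollification of the momentum equation, multiplication by $(\rho u)_\gamma/\rho_\gamma\,\varphi$, DiPerna--Lions commutator estimates) goes through verbatim on $[0,T]\times\R^2$ because the hypothesis gives $P \in L^2_\loc([0,+\infty)\times\R^2)$ including the endpoint; this yields the local energy \emph{equality} with the initial-time boundary term $\tfrac12\int\rho_0|u_0|^2\varphi(0)$. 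Then the cut-off argument of \cref{lem:StrongIneq} with $\varphi_n(x)=\varphi(x/n)$, where the $L^2_\loc([0,+\infty);\bmo)$ bound kills the pressure flux $\int P\,u\cdot\nabla\varphi_n$, passes to the limit $n\to\infty$ on the whole interval $[0,t]$ and gives the energy equality from $s=0$. You correctly identified that the added value of the hypothesis is its validity up to $t=0$, but the way to exploit it is through the test functions in the localized energy identity, not through an (unavailable) regularity upgrade of $u$ at the initial time.
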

This corollary take inspiration from the result \cite[Theorem 1.1]{BieWang2023}, where Wang and Bie prove the energy equality for weak solutions of the inhomogeneous incompressible Hall--MHD system in a bounded domain. Here, combining their method with the additional $\bmo$-bound on the pressure allows us deal with the $[0,\infty) \times \R^2$ setting for the incompressible inhomogeneous Navier--Stokes equations.
\subsubsection{Application to weak-strong uniqueness}
\label{ssec:applications}

As mentioned above, in the case of constant densities, the uniqueness of Leray--Hopf solutions in two spatial dimensions has been known since the works by Ladyzhenskaya, Lions, and Prodi (see~\cite{Ladyzhenskaya1959,LionsProdi1959}). However, this remains an open problem in the inhomogeneous framework. One of the primary reasons for this difficulty is the following observation. Let $(\rho_1,u_1,P_1)$ and $(\rho_2,u_2,P_2)$ be two sufficiently regular solutions of \cref{eq:ns} with respect to the same initial data $(\rho_0,u_0).$ A simple calculation shows that, for every $t >0$, 
\begin{align}\label{eq:energydiffintro}
    \frac{1}{2}\norm{\sqrt{\rho_1(t)} \delta u(t)}_{L^2}^2 + \nu \int_0^t 
    \norm{\nabla \delta u}^2_{L^2} \dd s  &= -\int_0^t \int_{\R^2} \delta \rho 
    \dot{ u_2} \cdot \delta u -\int_0^t \int_{\R^2}  \rho_1 \delta u \otimes \delta u : 
    \nabla u_2,
\end{align}
where we used the notations
\begin{align*}
    \dot{u}_2 \coloneqq \partial_t u_2 + (u_2 \cdot \nabla) u_2, \quad \delta \rho \coloneqq \rho_1 - \rho_2, \quad \text{and  }\: \delta u \coloneqq u_1 - u_2.
\end{align*}
In the case of constant density, the first term on the right-hand side of \eqref{eq:energydiffintro} does not appear, and it is precisely this term that introduces the main technical difficulties. In order to get uniqueness, one usually relies on Grönwall's inequality. To this end, it would be necessary find a function $f \in L^1_\loc([0,\infty))$ such that the right-hand side of \eqref{eq:energydiffintro} is bounded by
\begin{align}\label{eq:f}
    \int_0^t f(s)\norm{\sqrt{\rho_1(s)} \delta u(s)}_{L^2}^2 \dd s + \frac{\nu}{2} \int_0^t 
    \norm{\nabla \delta u}^2_{L^2} \dd s.
\end{align}
Then, by Grönwall's inequality, we would deduce that $\delta u \equiv 0$, which implies that
\begin{align*}
    \partial_t \delta \rho + \div(u_2 \delta \rho ) = 0,\quad \delta \rho(0) = 0.
\end{align*}
Hence, by assuming, for instance, that $u_2$ is Lipschitz continuous in space, we can conclude that $\delta \rho (t)=0$ for almost every $t>0$.

However, the first term on the right-hand side of \eqref{eq:energydiffintro} is not quadratic in $\delta u$ and it is therefore unclear how to rewrite it in the form \eqref{eq:f}. To capture the quadratic structure in terms of $\delta u$, one way is to perform a stability analysis of the continuity equation 
\begin{align}\label{eq:deltarhointro}
    \partial_t \delta \rho + \div (\delta \rho u_2) = \div(\rho_1 \delta u) = - \nabla \rho_1 \cdot \delta u,
\end{align}
so that the $\delta \rho$ term can be bounded in term of $\delta u$. In doing so, the right-hand side of \eqref{eq:deltarhointro} causes a loss of one derivative, and, as discussed in \cite[Section 1.4]{CrinBaratSkondricViolini2024}, this requires estimating higher derivatives of the material derivative \( \dot{u}_2 \). Indeed, following this procedure, we obtain 
\begin{align}\label{eq:productestimateintro}
\begin{aligned}
    - \int_0^t \int_{\R^2} \delta \rho \dot{u}_2 \cdot \delta u \dd x \dd s &\lesssim \int_0^t \int_0^s \int_{\R^2}(\dive\left(\rho_1 \delta u))(\tau,X(\tau,x)
    \right)\dot{u}_2\cdot \delta u  \dd\tau \dd x\dd s
    \\&\lesssim 
     \int_0^t \int_0^s \norm{(\rho_1 \delta u)(\tau)}_{L^4} \dd \tau \norm{\nabla(\dot{u}_2(s) \cdot \delta u(s))}_{L^{4\slash 3}} \dd s, \end{aligned}
\end{align} where $X$ is the flow associated with the continuity equation \eqref{eq:deltarhointro}; see \cite{CrinBaratSkondricViolini2024} for more details.
If \((\rho_2, u_2)\) is an immediately strong solution, the higher derivative estimates for the material derivative \( \dot{u}_2 \) are ensured by \cref{it:A1A2A3} and we are able to conclude the uniqueness argument with Gronwäll's inequality provided that $u_2$ is continuous in time at \( t = 0 \). In this spirit, we establish the following weak-strong uniqueness result.

\begin{theorem}[Critical weak-strong uniqueness]\label{thm:weakstrong2D}
    Let us suppose that the initial data $(\rho_0,u_0)$ satisfy \cref{ass:data}, and let $(\rho_1,u_1)$ and $(\rho_2,u_2)$ be two Leray--Hopf solutions of \cref{eq:ns}. Let us assume, moreover, that $(\rho_2,u_2)$ is immediately strong (or one of the other
    equivalent conditions stated in  \cref{thm:main2D}) and that 
    \begin{align}\label{ass:data-thu}
        \partial_t u_2 \in L^1_\loc([0,\infty);L^2(\R^2)), 
        \quad \nabla u_2 \in L^1_\loc([0,\infty);L^\infty(\R^2)).
    \end{align}
    Then, 
    \begin{align*}
        (\rho_1,u_1) = (\rho_2,u_2) \quad \textrm{a.e in $[0,\infty)\times \R^2$}.
    \end{align*}
\end{theorem}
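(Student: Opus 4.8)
The plan is to establish uniqueness through an energy estimate on the differences $\delta u \coloneqq u_1 - u_2$ and $\delta\rho \coloneqq \rho_1 - \rho_2$, culminating in a Gr\"onwall argument that forces $\delta u \equiv 0$ and subsequently $\delta\rho \equiv 0$. The starting point is the energy-difference identity \eqref{eq:energydiffintro}, whose derivation requires that $(\rho_2,u_2)$ be sufficiently regular to serve as a legitimate test function in the weak formulation of both the momentum and continuity equations for $(\rho_1,u_1)$; this is precisely where the immediately strong hypothesis enters, supplemented by the time-continuity assumption \eqref{ass:data-thu} near $t=0$. The two terms on the right-hand side must then be controlled. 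The second term, $-\int_0^t\int_{\R^2}\rho_1\,\delta u\otimes\delta u:\nabla u_2$, is already quadratic in $\delta u$: using $\nabla u_2 \in L^1_{\loc}([0,\infty);L^\infty(\R^2))$ together with $0 < c_0 \le \rho_1 \le C_0$, I would bound it directly by $\int_0^t \norm{\nabla u_2(s)}_{L^\infty}\norm{\sqrt{\rho_1(s)}\,\delta u(s)}_{L^2}^2\dd s$, which is already of the form \eqref{eq:f} with $f(s) = C\norm{\nabla u_2(s)}_{L^\infty} \in L^1_{\loc}$.

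The crux is the first term $-\int_0^t\int_{\R^2}\delta\rho\,\dot u_2\cdot\delta u$, which is not quadratic in $\delta u$. Following the strategy outlined around \eqref{eq:deltarhointro}--\eqref{eq:productestimateintro}, I would perform a stability analysis of the continuity equation for $\delta\rho$ along the flow $X$ generated by $u_2$: writing $\partial_t\delta\rho + \div(\delta\rho\,u_2) = -\nabla\rho_1\cdot\delta u = -\div(\rho_1\delta u)$ (using $\div\delta u = 0$ only partially, and $\div u_2 = 0$), one represents $\delta\rho(s)$ as a time integral of $\div(\rho_1\delta u)$ transported along $X$. Substituting this representation recovers the estimate \eqref{eq:productestimateintro}, reducing the first term to a product of $\int_0^s\norm{(\rho_1\delta u)(\tau)}_{L^4}\dd\tau$ and $\norm{\nabla(\dot u_2(s)\cdot\delta u(s))}_{L^{4/3}}$. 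The factor $\norm{\nabla(\dot u_2\cdot\delta u)}_{L^{4/3}}$ expands by the product rule into $\norm{\nabla\dot u_2\,\delta u}_{L^{4/3}} + \norm{\dot u_2\,\nabla\delta u}_{L^{4/3}}$, and here the higher-derivative control of the material derivative $\dot u_2 = \partial_t u_2 + (u_2\cdot\nabla)u_2$ is indispensable: the quantities $A_1^0, A_2^0, A_3^0$ from \cref{it:A1A2A3}, finite by hypothesis, supply exactly the $L^2$-type bounds on $\dot u_2$ and its gradient needed to close these estimates via H\"older and the two-dimensional Ladyzhenskaya/Gagliardo--Nirenberg interpolation (footnote~\ref{ft:gn}).

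The main obstacle, and the delicate point of the whole argument, will be assembling these pieces into a genuine Gr\"onwall-type inequality rather than merely a product bound. The issue is that \eqref{eq:productestimateintro} produces a \emph{double} time integral $\int_0^t\big(\int_0^s\norm{(\rho_1\delta u)(\tau)}_{L^4}\dd\tau\big)(\cdots)\dd s$, whose inner integral couples the history of $\delta u$ in a non-local way; I must convert this into the standard form $\int_0^t f(s)\norm{\sqrt{\rho_1}\,\delta u(s)}_{L^2}^2\dd s + \tfrac{\nu}{2}\int_0^t\norm{\nabla\delta u}_{L^2}^2\dd s$ of \eqref{eq:f}. The plan is to absorb one gradient factor of $\delta u$ into the dissipative term $\nu\int_0^t\norm{\nabla\delta u}_{L^2}^2\dd s$ on the left-hand side via Young's inequality, control $\norm{\rho_1\delta u}_{L^4}$ by interpolation between $\norm{\delta u}_{L^2}$ and $\norm{\nabla\delta u}_{L^2}$, and then invoke a suitable integral (Bihari/Gr\"onwall-type) inequality adapted to the nested-integral structure. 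The continuity of $u_2$ at $t=0$ guaranteed by \eqref{ass:data-thu} is what ensures the relevant integrals are finite as $s\downarrow 0$ and that the energy identity \eqref{eq:energydiffintro} holds with vanishing initial contribution $\delta u(0)=0$; without it the boundary term at $t=0$ could not be discarded. Once $\delta u\equiv 0$ is established, the transport equation $\partial_t\delta\rho + \div(u_2\delta\rho)=0$ with $\delta\rho(0)=0$ and the spatial regularity $\nabla u_2\in L^1_{\loc}([0,\infty);L^\infty)$ yields $\delta\rho\equiv 0$ by the well-posedness of the linear transport equation, completing the proof.
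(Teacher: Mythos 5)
Your plan follows essentially the same route as the paper: the energy-difference inequality \eqref{eq:energydiff} obtained by using $u_2$ as a test function (justified by the continuity at $t=0$ and the finiteness of $A^0_1,A^0_2,A^0_3$), a direct quadratic absorption of the convective term, the Lagrangian representation of $\delta\rho$ leading to \eqref{eq:productestimateintro}, Gr\"onwall, and finally DiPerna--Lions uniqueness for the transport equation to kill $\delta\rho$. Your treatment of $I_2$ via $\nabla u_2\in L^1_\loc L^\infty$ is a harmless variant of the paper's, which instead uses $\nabla u_2\in L^2L^2$ together with Ladyzhenskaya's inequality and Young's inequality so that only critical quantities appear.

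The one place where your proposal stops short of a proof is exactly the point you flag as ``the main obstacle'': converting the nested double integral into Gr\"onwall form. Invoking ``a suitable Bihari/Gr\"onwall-type inequality adapted to the nested-integral structure'' is a placeholder, not an argument, and no nonlinear (Bihari) inequality is in fact needed. The paper's concrete mechanism is: rewrite the inner integral as $s\cdot\frac{1}{s}\int_0^s\norm{\nabla\delta u(\tau)}_{L^2}^{1/2}\dd\tau$, attach the factor $s$ (respectively $s^2$) to the material-derivative norms, where the weighted bounds $s^2\norm{\nabla\dot u_2(s)}_{L^2}^2\in L^1$ and $s^2\norm{\dot u_2(s)}_{L^2}^2\leq C$ supplied by $A^0_2$ and $A^0_3$ control them; apply Young's inequality with exponents $(4,4,2)$ to the resulting four-factor product; and then absorb the averaged history term through the Hardy/Jensen-type bound
\begin{align*}
    \int_0^t \Bigl( \frac{1}{s}\int_0^s \norm{\nabla \delta u(\tau)}_{L^2}^{1/2} \dd \tau\Bigr)^4 \dd s \lesssim \int_0^t \norm{\nabla \delta u(s)}^2_{L^2} \dd s,
\end{align*}
which places it into the dissipation on the left-hand side. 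After these steps the inequality is \emph{linear} in $\sup_{\tau\leq s}\norm{\sqrt{\rho_1(\tau)}\,\delta u(\tau)}_{L^2}^2$ with an $L^1$ coefficient, and the classical Gr\"onwall lemma applies. Without this (or an equivalent) device, your outline does not close, so you should supply it explicitly.
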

\begin{remark}

    All the quantities appearing in \cref{thm:weakstrong2D} are critical with respect to the scaling in \cref{eq:scalingnse}, which is an improvement compared to the weak-strong uniqueness results in \cite[Theorem 1.6]{CrinBaratSkondricViolini2024}. 
    \end{remark}
   \begin{remark}
   To motivate the assumptions \cref{ass:data-thu}, we recall that, in \cite{Danchin2024}, Danchin proved that two solutions \( (\rho_1, u_1, P_1) \) and \( (\rho_2, u_2, P_2) \) of \cref{eq:ns} coincide if they satisfy the following conditions:
\begin{align*}
   \quad \quad \quad \partial_t u_i \in L^1_\loc([0,\infty);L^2(\R^2)),\quad
    \nabla u_i \in L^1((0,\infty); L^\infty(\R^2)), \quad 
    A_1^0( u_i), \, A_2^0( u_i), \, A_3^0(u_i) \leq C,
\end{align*}
for $i\in \{1,2\}$, where $A^0_j$, for $j \in \{1,2,3\}$, is defined in \cref{def:IS}. In \cref{thm:weakstrong2D}, the regularity assumptions are required only for one of the two solutions extending the uniqueness result in \cite{Danchin2024}. This improvement stems from avoiding the transition to Lagrangian coordinates by employing the relative energy method, which allows for relaxed regularity assumptions on one of the two solutions.

\end{remark}

Numerous classes of strong solutions studied in the literature verify the hypothesis imposed on $(\rho_2,u_2)$ in \cref{thm:weakstrong2D}. Thus, as a direct corollary of \cref{thm:weakstrong2D}, we deduce that the strong solutions constructed in~\cite[Theorem 1.1]{AbidiGuiZhang2023},~\cite[Theorem 1.4]{DanchinWang2023}, and~\cite[Theorem 1.3]{Danchin2024} are unique in the class of Leray--Hopf solutions.

\begin{corollary}[Uniqueness]\label{cor:uniqueness-ll}
Let us suppose that the initial data $(\rho_0,u_0)$ satisfy \cref{ass:data}. Let us assume, in addition, that one of the following conditions holds:
\begin{enumerate}
    \item $u_0\in \dot{B}^{-1+\frac2p}_{p,1}$ and $1-\rho_0^{-1}\in \dot{B}^{\frac2\lambda}_{\lambda,2}$, for $p\in[2,+\infty)$ and $\lambda\in[1,\infty)$ such that $1/2<1/p+1/\lambda\leq1$ (framework of~\cite{AbidiGuiZhang2023});
    \item $u_0\in \dot{B}^{-1+\frac2p}_{p,1}$,  $\|\rho_0-1\|_{L^\infty}$ small enough, with $p\in (1,2)$ (framework of~\cite{DanchinWang2023}); 
    \item $u_0\in \widetilde{B}^{0,s}_{\rho_0,s}$ for some $s\in(0,1)$ (framework of~\cite{Danchin2024}).
\end{enumerate}
Then \cref{eq:ns} admits a global-in-time strong solution that is unique in the class of Leray--Hopf solutions.
\end{corollary}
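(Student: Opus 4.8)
The plan is to prove \cref{cor:uniqueness-ll} by verifying, for each of the three frameworks, that the strong solution constructed in the corresponding reference satisfies all the hypotheses imposed on the second solution $(\rho_2,u_2)$ in \cref{thm:weakstrong2D}, and then invoking that theorem directly. Recall that \cref{thm:weakstrong2D} requires $(\rho_2,u_2)$ to be \emph{immediately strong} (equivalently, by \cref{thm:main2D}, to satisfy the strong energy inequality, or the bounds $A_1^0,A_2^0,A_3^0\leq C$, or the $\bmo$-pressure regularity) together with the two continuity-type assumptions in \cref{ass:data-thu}, namely $\partial_t u_2\in L^1_\loc([0,\infty);L^2(\R^2))$ and $\nabla u_2\in L^1_\loc([0,\infty);L^\infty(\R^2))$. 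Thus the corollary is essentially a bookkeeping exercise: for each listed data class I must (a) confirm the cited existence theorem produces a global strong solution, and (b) extract from its a priori bounds the two facts that it is immediately strong and that \cref{ass:data-thu} holds.

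First I would recall that the strong solutions in all three frameworks are a fortiori Leray--Hopf solutions, so both $(\rho_1,u_1)$ (any Leray--Hopf solution) and $(\rho_2,u_2)$ (the constructed strong one) fit the setup of \cref{thm:weakstrong2D}. For framework (iii) — the setting of \cite[Theorem 1.3]{Danchin2024} — the verification is most transparent, since the whole notion of immediately strong solution and the quantities $A_1^0,A_2^0,A_3^0$ in \cref{def:IS} are taken from exactly that paper; the construction there directly yields the bounds $A_1^0(u_2),A_2^0(u_2),A_3^0(u_2)\leq C$ (so condition \ref{it:A1A2A3} of \cref{thm:main2D} holds, i.e.\ $(\rho_2,u_2)$ is immediately strong) as well as $\partial_t u_2\in L^1_\loc([0,\infty);L^2(\R^2))$ and $\nabla u_2\in L^1((0,\infty);L^\infty(\R^2))\subset L^1_\loc([0,\infty);L^\infty(\R^2))$, which is precisely \cref{ass:data-thu}. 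Hence \cref{thm:weakstrong2D} applies verbatim.

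For frameworks (i) and (ii) I would argue similarly but trace the Besov regularity through. The key point is that the Besov-space bounds on $u_0$ propagate to a time-weighted parabolic smoothing estimate for $u_2$: in the framework of \cite{AbidiGuiZhang2023} (respectively \cite{DanchinWang2023}), the constructed solution enjoys maximal-regularity bounds of the form $\partial_t u_2,\,\nabla^2 u_2\in L^2((\eps,\infty);L^2(\R^2))$ for every $\eps>0$, which is exactly the defining property \eqref{eq:immBound} of an immediately strong solution. To obtain \cref{ass:data-thu}, I would use the embedding of the relevant Besov regularity class into $\partial_t u_2\in L^1_\loc([0,\infty);L^2)$ and $\nabla u_2\in L^1_\loc([0,\infty);L^\infty)$; here the integrability near $t=0$ is guaranteed because the initial velocity lies in a Besov space with summation index $1$ (the $\dot B^{-1+2/p}_{p,1}$ data), which is precisely the borderline regularity ensuring $L^1$-in-time control of $\nabla u_2$ up to the initial time rather than merely $L^2_\loc$ away from it.

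The main obstacle I anticipate is the near-$t=0$ time integrability in frameworks (i) and (ii): away from $t=0$ the parabolic smoothing makes everything smooth, but establishing $\partial_t u_2\in L^1_\loc([0,\infty);L^2)$ and $\nabla u_2\in L^1_\loc([0,\infty);L^\infty)$ \emph{up to the initial time} requires the sharp time-weighted estimates from the cited constructions and careful checking that the singularity of $\nabla u_2(t)$ as $t\to 0^+$ is integrable. Concretely, one typically has $\nabla u_2(t)\in L^\infty$ with a bound like $t^{-\alpha}$ for some $\alpha<1$ coming from the gain of regularity off the $\dot B^{-1+2/p}_{p,1}$ data, and one must confirm $\alpha<1$ so that $\int_0^T \norm{\nabla u_2(t)}_{L^\infty}\dt<\infty$. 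Once these integrability facts are read off from \cite{AbidiGuiZhang2023,DanchinWang2023,Danchin2024}, each case reduces to a direct application of \cref{thm:weakstrong2D}, giving $(\rho_1,u_1)=(\rho_2,u_2)$ and hence uniqueness in the Leray--Hopf class.
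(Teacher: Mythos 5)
Your proposal matches the paper's approach exactly: the paper gives no separate proof of \cref{cor:uniqueness-ll}, treating it as a direct consequence of \cref{thm:weakstrong2D} once one observes that the strong solutions constructed in \cite{AbidiGuiZhang2023}, \cite{DanchinWang2023}, and \cite{Danchin2024} satisfy the immediately-strong property and the assumptions \cref{ass:data-thu}. Your additional care about the $L^1$-in-time integrability of $\nabla u_2$ near $t=0$ (coming from the third Besov index being $1$) is exactly the point that makes the cited constructions fit the hypotheses, so the argument is correct and essentially identical to the paper's.
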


\begin{remark}
\cref{cor:uniqueness-ll} extends the uniqueness result of \cite[Theorem 1.1, Corollary 1.1]{AbidiGuiZhang2023} to a wider range of indices.
\end{remark}

\subsection{Strategy of proof of \texorpdfstring{\cref{thm:main2D}}{Theorem 1.1}}
\label{ssec:diff}

Let us outline the key elements of the strategy used in the proof of \cref{thm:main2D}.

Immediately strong solutions satisfy the energy equality on $(0, \infty)$, see \cref{lem:energyeq1}, and,  together with the energy inequality, this implies the strong energy inequality. This shows that  \cref{it:immediately} implies \cref{it:strongIn}. For the reverse implication, we demonstrate that the strong energy inequality grants a semi-flow property to the Leray--Hopf solutions; that is, any time translation of such a solution remains a Leray--Hopf solution with respect to the corresponding translated initial data, and, at the same time, the time translation is much more regular. More precisely, given a Leray--Hopf solution $(\rho,u)$ which satisfies the strong energy inequality, and a $t>0$ such that $u(t) \in H^1(\R^2)$, then, by \cref{thm:weakstrong2D}, the $t$-time translation of $(\rho,u)$ coincides with the more regular solution $(\widetilde{\rho}_t,\widetilde{u}_t)$, see \cref{eq:PZZsolintro}, and, by repeating this procedure for arbitrarily small times, we conclude that $(\rho,u)$ has the same regularity as $(\widetilde{\rho}_t,\widetilde{u}_t)$ for every $t>0.$ The crucial point is that the strong energy inequality guarantees that any $t$-time translation of $(\rho,u)$ is a Leray--Hopf solution with respect to the initial data $(\rho(t),u(t))$, which allows us to use the \cite[Theorem 1.6]{CrinBaratSkondricViolini2024}.

The implication from \cref{it:A1A2A3} to \cref{it:immediately} is a direct consequence of the definition of $A^0_1$. For the reverse implication, assuming both \cref{it:immediately} and \cref{it:strongIn}, we make use of the result in \cite[Theorem 1.1]{Danchin2024}. 

Let us describe how the equivalence between \cref{it:immediately}, \cref{it:strongIn}, \cref{it:A1A2A3} and \cref{it:pressure} can be established. To this end, let $(\rho,u,P)$ be a smooth triplet satisfying
\begin{align*}
    \nabla P = - \rho \dot{u} + \nu \Delta u \coloneqq G.
\end{align*}
Clearly, this condition implies that $\curl G = 0,$ and the (unique, up to a constant) potential, still denoted by $P$, can be computed by the formula
\begin{align}\label{eq:antigradientintro}
    G(t) \mapsto P(t) = \int_0^1 x \cdot G(t,\tau x) \dd \tau.
\end{align}
Note that, when assuming that $G$ is an immediately strong Leray--Hopf solution, we also have that, for every $t>0$, $G(t) \in L^2(\R^2) \cap L^4(\R^2)$. In \cref{app:grad}, we prove that the formula in \cref{eq:antigradientintro} can be extended to an operator
\begin{align}
    \Phi \colon L^2(\R^2) \cap L^4(\R^2) \to C^{1/2}_0(\R^2) \cap \bmo(\R^2),
\end{align}
which maps every smooth, curl-free vector field in $L^2(\R^2) \cap L^4(\R^2)$ to its normalized potential, i.e., $P(0) = 0.$ This shows that, when treating the time as parameter, the associated pressure to an immediately strong solution $(\rho,u)$ can be obtained by setting $P = \Phi G.$ Then, due to the properties of the operator $\Phi$, we have the desired bounds
\begin{align}\label{eq:pressureregintro}
    P \in L^2_\loc((0,\infty)\times \R^2) \cap L^2_\loc((0,\infty);\bmo(\R^2)).
\end{align}
This shows that \cref{it:A1A2A3} $\implies$ \cref{it:pressure}. Due to the equivalence of \cref{it:immediately}, \cref{it:strongIn}, and \cref{it:A1A2A3}, it suffices to show that \cref{it:pressure} implies \cref{it:strongIn} in order to complete the proof of \cref{thm:main2D}. 

This is done in two steps. First, inspired by \cite{BieWang2023}, we use a regularization procedure and commutator estimates to show that, under the assumption  \cref{eq:pressureregintro},  $(\rho,u,P)$ is a suitable solution: namely, for every $\varphi \in C^\infty_c((0,\infty)\times \R^2)$,
\begin{align}\label{eq:suitsolintro}
    \begin{aligned}
      &- \int_0^\infty\int_{\R^2} \frac{1}{2} \rho |u|^2 \partial_s \varphi  \, \mathrm d x \, \mathrm d s - \nu  
        \int_0^\infty\int_{\R^2} \frac{1}{2}|u|^2 \Delta \varphi \, \mathrm d x \, \mathrm d s \\&\quad\quad\quad\quad+ \nu 
        \int_0^\infty\int_{\R^2} |\nabla u|^2 \varphi \, \mathrm d x \, \mathrm d s  - \int_0^\infty\int_{\R^2} \left(\frac{1}{2} \rho |u|^2  u + P  u \right) \cdot \nabla \varphi \, \mathrm d x \, \mathrm d s = 0.
    \end{aligned}
\end{align}
This shows that a Leray--Hopf solution with a more regular pressure satisfies the local energy inequality. Unlike in the homogeneous case, where (owing to \cite{Kukavica2006}) the energy equality is true when just assuming $P \in L^2_\loc((0,\infty)\times \R^2)$, an additional assumption is needed to conclude the strong energy inequality. Then, a possible way to deduce the strong energy inequality from \cref{eq:suitsolintro} is to choose a sequence of test functions converging to $1$ which are independent of time. The additional bounds in $\bmo$ on the pressure and the bounds on $u$ ensure that all terms on the right-hand side of \cref{eq:suitsolintro} vanish in the limit.

\subsection{Outline of the paper} The paper is organized as follows. In \cref{sec:solutions}, we discuss the notion of solutions and their main properties. In \cref{sec:i-ii-iii}, we prove the equivalence between \cref{it:immediately}, \cref{it:strong}, and \cref{it:A1A2A3}. More precisely, in \cref{subsec:SEI}, we prove the semi-flow property of immediately strong solution and the equivalence between \cref{it:immediately} and \cref{it:strong}; in \cref{subse:Timedec}, we discuss the additional regularity properties given by \cref{it:A1A2A3} and prove that \cref{it:A1A2A3} implies \cref{it:strong} and that  \crefrange{it:immediately}{it:strong} imply \cref{it:A1A2A3}. In \cref{sec:pressure}, we study the pressure and prove the equivalence between \crefrange{it:A1A2A3}{it:A1A2A3} and \cref{it:pressure}. More precisely, in \cref{ssec:pr1}, we prove that \cref{it:pressure} implies \cref{it:strong}; in \cref{ssec:pr2}, we prove that \cref{it:immediately} and \cref{it:A1A2A3} imply \cref{it:pressure}. In \cref{ssec:pr1}, we also present the proof of \cref{cor:equality}. In \cref{sec:uniqueness}, we prove \cref{thm:weakstrong2D}. Finally, in \crefrange{app:conv-comm}{app:grad}, we collect several technical results needed throughout the paper. 

\subsection{Notations}

Throughout the paper, we use  $\mathscr{L}^d$ to denote the $d$-dimensional \textit{Lebesgue measure} and the standard notation for \textit{Lebesgue spaces}: for a domain $\Omega \subset \mathbb{R}^d$, we let  $L^p(\Omega)$, for $p \in [1,\infty]$, denote space of all measurable functions $f$ with finite norm 
$$
\begin{aligned}
& \|f\|_{L^p(\Omega)}\coloneqq \left(\int_{\Omega}|f(x)|^p\, \mathrm d x \right)^{1 / p}, \qquad  \text{if } p \in [1,\infty),  \\
& \|f\|_{L^{\infty}(\Omega)}\coloneqq \underset{x \in \Omega}{\operatorname{ess} \sup }|f(x)|\coloneqq \inf \{C:\, |f| \leq C \text { a.e.~in } \Omega\} .
\end{aligned}
$$
We also use the notation $L^p_\sigma(\Omega)$ to denote the space of $L^p$-functions with zero (distributional) divergence. We use $L^p_{\text{loc}}(\Omega)$ for the space of locally \( p \)-integrable, functions, 
\[
L_{\mathrm{loc}}^p(\Omega) = \left\{ f : \Omega \to \mathbb{R} \, \text{ measurable}: \,  \ f|_K \in L^p(K),  \text{ for all }  K \subset \Omega, \, K \text{ compact} \right\}.
\]
When the domain $\Omega$ is clear from the context, we omit it and write only $L^p$, $L^p_{\mathrm{loc}}$ or $L^p_\sigma$.

Given a Banach space $X$, we define the usual \textit{Lebesgue--Bochner space} $L^p([0, T]; X)$: we denote the usual  of strongly measurable maps $f : [0, T] \to X$ with 
\[
\|f\|_{L^p([0, T]; X)} \coloneqq \left( \int_{0}^T \|f\|_X^p \, \mathrm{d}t \right)^{1/p} < \infty.
\]
Similarly, \( C_w([0,T]; X) \) or $C_{w^*}([0,T]; X)$ denote the space of functions \( f : [0,T] \to X \) that are continuous with respect to the weak or weak-$\ast$ topology of \( X \), which means that, for all \( \phi \in X' \),
\(
t \mapsto \langle f(t), \phi \rangle
\)
is a continuous function on \( [0,T] \). We also adopt the symbols $C_{c}$ or $C_{c,\sigma}$ to denote continuous functions with compact support or zero (distributional) divergence.

We define by $f_\gamma$ the mollification of $f$ in space, \ie, the convolution of $f$, with respect to the space variable, with the standard mollifier $\eta_\gamma$.

For a set $M$ and functions $f,\, g: M \rightarrow \mathbb{R}_{+}$, we  write $f(m) \lesssim g(m)$ if there exists a constant $C>0$, independent of $m$, such that $f(m) \leq C g(m)$ for all $m \in M$ (and similarly for $\gtrsim$).
For \textit{Besov spaces}, we adopt the notation of \cite{BahouriCheminDanchin2011}.
Finally, we let $C$ denote a generic constant that might change from line to line.

\section{Notions of solutions}
\label{sec:solutions}

In this section, we recall the notions of solutions used throughout the manuscript.

\subsection{Leray--Hopf solutions}\label{ssec:lh}

First, we recall the definition of Leray--Hopf weak solutions. 

\begin{definition}[Leray--Hopf weak solution]\label{def:lerayhopfsol}
A pair $(\rho,u)$ is a \textit{Leray--Hopf} weak solution of \cref{eq:ns} with initial data $(\rho_0,u_0)$ 
satisfying \cref{ass:data} if
    \begin{itemize}
        \item[(i)] The solution satisfies
        \begin{align*}
           & \sqrt{\rho} u \in L^\infty((0,\infty);\,L^2(\R^2)),
            \quad \rho \in L^\infty((0,\infty)\times \R^2),
            \\ &  u \in L^2_{\loc}([0,\infty)\times\R^2)  \quad \text{and} \quad \nabla u \in L^2((0,\infty) \times \R^2); 
        \end{align*}
        \item[(ii)] The pair $(\rho,u)$ is a distributional solution of \cref{eq:ns}:
        \smallbreak
        \begin{itemize}
            \item[$\bullet$] For every  $\varphi \in C_c^\infty([0,\infty) \times \R^2; \R)$,
        \begin{align*}
             \int_0^\infty \int_{\R^2} \rho \partial_s \varphi \dd x \dd s
            +  \int_0^\infty \int_{\R^2} \rho u \cdot \nabla \varphi \dd x \dd s 
            =- \int_{\R^2} \rho_0 \varphi(0) \dd x;
        \end{align*}
        \item[$\bullet$] For every $\varphi \in C_{c,\sigma}^\infty([0,\infty) \times \R^2;\R^2)$;
        \begin{align*}
            \int_0^\infty \int_{\R^2} &\rho \partial_s \varphi \cdot u+
            \rho u \otimes u : \nabla \varphi \dd x \dd s  
            - \nu \int_0^\infty \int_{\R^2} \nabla u : \nabla \varphi \dd x \dd s = -\int_{\R^2} \rho_0 u_0 \cdot \varphi(0)\dd x;
        \end{align*}
        \item[$\bullet$]For every  $\varphi \in C_c^\infty([0,\infty) \times\R^2;\R)$,
        \begin{equation*}
            \int_0^\infty\int_{\R^2} u\cdot \nabla \varphi \dd x\dd s =0;
        \end{equation*}
  \end{itemize}
         \item[(iii)] The energy inequality holds:
        \begin{align*}
           \frac{1}{2} \int_{\R^2} \rho(t) |u(t)|^2 \dd x + \nu \int_0^t \int_{\R^2} |\nabla u(s)|^2 \dd x \dd s \leq 
           \frac{1}{2} \int_{\R^2} \rho_0|u_0|^2 \dd x \quad \text{for every $t \in (0,\infty)$}.
        \end{align*}
     
    \end{itemize}
\end{definition}
\begin{remark}
Leray--Hopf weak solutions starting from  a no-vacuum state do not develop vacuum (see~\cite[Theorem 1.8]{CrinBaratSkondricViolini2024}), \ie, if $(\rho,u)$ is a Leray--Hopf weak solution, then
    \begin{align*}
        0 < c_0 \leq \rho_0(x) \leq C_0 \quad \text{for~a.e.~$x \in \R^2$}  \implies  0 < c_0 \leq \rho(t,x) \leq C_0 \quad \text{for~a.e.~$x \in \R^2$ and every $t>0$}.
    \end{align*}
\end{remark}
\begin{definition}[Strong energy inequality and energy equality]\label{def:energies}
    We say that a Leray--Hopf solution $(\rho,u)$ of \cref{eq:ns}, with initial data $(\rho_0,u_0)$ satisfying  \cref{ass:data}, satisifies   
    \begin{enumerate}
        \item the \emph{strong energy inequality} if there exists a set $J \subset (0,\infty)$ such that $\mathscr{L}^1((0,\infty) \setminus J) = 0$ and, for every $t,\,s \in J$, with $t>s$:
    \begin{align*}
        \frac{1}{2} \int_{\R^2} \rho(t) |u(t)|^2 \dd x +\nu \int_s^t\int_{\R^2} |\nabla u(\tau)|^2 \dd x \dd \tau 
        \leq \frac{1}{2} \int_{\R^2} \rho(s) |u(s)|^2 \dd x;
    \end{align*}
    \item the \emph{energy equality} if, for every $t,s \in [0,+
    \infty)$, with $s<t$,
    \begin{align*}
        \frac{1}{2} \int_{\R^2} \rho(t) |u(t)|^2 \dd x + \nu\int_s^t\int_{\R^2} |\nabla u(\tau)|^2 \dd x \dd \tau 
        = \frac{1}{2} \int_{\R^2} \rho(s) |u(s)|^2 \dd x.
    \end{align*}
    \end{enumerate}
\end{definition}
\begin{remark}[Continuity up to the initial time]
   Our definition of \textit{strong energy inequality} is slightly weaker than the classical one, where the inequality is required for every \( s \in J \) and every \( t > s \). We will see that, for immediately strong solutions, the two definitions are equivalent. Moreover, for immediately strong solutions, the energy equality holds for every \( s, t \in (0, \infty) \) (see the proof of \cref{prop:contitimeSEI} and \cref{lem:StrongIneq}). 
\end{remark}

\subsection{Strong solutions} \label{ssec:strong}

We define a subclass of strong solutions, within the Leray--Hopf ones.

\begin{definition}[Strong solutions]\label{def:strongsol}
    We say that a Leray--Hopf solution \( (\rho, u) \) with initial data \( (\rho_0, u_0) \) satisfying \cref{ass:data} is a \textit{strong solution} to \cref{eq:ns} if 
    \begin{align}\label{eq:strongCond}
        \partial_t u, \, \nabla^2 u \in L^2((0, \infty);\, L^2(\R^2)).
    \end{align}
    Up to a modification on a negligible set of times, we can assume also $u\in C([0,\infty);L^2(\R^2))$.
\end{definition}

Let us compare the notion of strong solution we adopt in \cref{def:strongsol} with the solutions constructed in some classical well-posedness results from the literature. In~\cite{PaicuZhangZhang2013}, the authors build a solution from initial data $(\rho_0,u_0)$ in \cref{ass:data} with the additional assumption that $u_0 \in H^1$. Their solution satisfies the following regularity estimates:
\begin{align}\label{def:S}
\begin{aligned}
    & A_1(u) \coloneqq  \esssup_{s \in (0, \infty)} \int_{\R^2} |\nabla u|^2 \dd x + \int_0^\infty  \int_{\R^2} |\partial_s u|^2 + |\nabla^2 u|^2 + \abs{\nabla P}^2 \dd x \dd s \leq C,\\
    & A_2(u) \coloneqq \esssup_{s \in (0, \infty)} s \int_{\R^2}  |\partial_s u|^2 + |\nabla^2 u|^2 + \abs{\nabla P}^2 \dd x + \int_0^\infty s \int_{\R^2}  |\partial_s \nabla u|^2 \dd x \dd s \leq C.
    \end{aligned}
\end{align} 
\begin{remark}\label{rk:matDer}
    The regularity in  \cref{def:S} implies that\footnote{~The key idea is to use Agmon's inequality (that is, Gagliardo--Nirenberg's interpolation inequality 
    $ \|D^{j}u\|_{L^{p}(\mathbb {R} ^{d})}\leq C\|D^{m}u\|_{L^{r}(\mathbb {R} ^{d})}^{\theta }\|u\|_{L^{q}(\mathbb {R} ^{d})}^{1-\theta },$
    with $d=2$, $j=0$, $m=0$, $p = \infty$, $q = 2$, $r=2$,  and $\theta = 1/2$). We sketch the first estimate: \begin{align*}
     \int_0^\infty \norm{ u \cdot \nabla u}^2_{L^2} \dd s  \leq \int_0^\infty \norm{ u}^2_{L^\infty} \norm{\nabla u}^2_{L^2} \dd s  &\leq \int_0^\infty \norm{ u}_{L^2} \norm{\nabla^2 u}_{L^2} \norm{\nabla u}^2_{L^2} \dd s\\ &\leq \norm{ u}_{L^\infty L^2} \norm{ \nabla u}_{L^\infty L^2} \norm{ \nabla^2 u}_{L^2 L^2}.
\end{align*}}
   $u \cdot \nabla u \in L^2((0, \infty);\, L^2(\R^2))$. 
   Combining this with the time-regularity in \cref{eq:strongCond} also yields 
  $\dot{u} \in L^2((0, \infty);\, L^2(\R^2))$. In other words, a strong solution in the sense of \cite{PaicuZhangZhang2013} is a Leray--Hopf weak solution where every term of the momentum equation belongs to $L^2((0, \infty);\, L^2(\R^2))$.
\end{remark}
It is clear that the solution built in~\cite{PaicuZhangZhang2013} is a strong solution according to \cref{def:strongsol}, but it is not immediate to see that our notion of strong solution satisfies \cref{def:S}. To prove this fact (and thus that the two notions are equivalent), we need the following special case of~\cite[Theorem 1.6]{CrinBaratSkondricViolini2024}.

\begin{theorem}[Weak-strong uniqueness in 2D]\label{thm:weakstrong1}
        Let us assume that the initial data $(\rho_0,u_0)$ satisfies \cref{ass:data} and let $(\rho_1, u_1)$ be a {Leray--Hopf} weak 
    solution of \cref{eq:ns}. Let $(\rho_2, u_2)$ be a strong solution of \cref{eq:ns} such that \cref{def:S} holds and with the same initial data 
    $(\rho_0, u_0)$. Then we have $(\rho_2, u_2) = (\rho_1, u_1).$
\end{theorem}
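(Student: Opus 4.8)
The plan is to run a relative energy (weak--strong) argument based on the relative kinetic energy
\[
    \mathcal{E}(t) \coloneqq \frac12 \int_{\R^2} \rho_1(t)\,\abs{u_1(t)-u_2(t)}^2 \dx,
\]
for which I would establish a Grönwall-type inequality with $\mathcal{E}(0)=0$, forcing $\mathcal{E}\equiv 0$. Writing $\delta\rho \coloneqq \rho_1-\rho_2$ and $\delta u \coloneqq u_1-u_2$, I would expand $\mathcal{E}=\tfrac12\int\rho_1\abs{u_1}^2-\int\rho_1 u_1\cdot u_2+\tfrac12\int\rho_1\abs{u_2}^2$ and reconstruct each piece from the equations: the first is controlled by the energy inequality for the Leray--Hopf solution $(\rho_1,u_1)$; the mixed term by testing the momentum equation for $(\rho_1,u_1)$ against the divergence-free field $u_2$; and the last term by testing the continuity equation for $\rho_1$ against $\tfrac12\abs{u_2}^2$, while using that $(\rho_2,u_2)$, being strong in the sense of \cref{def:strongsol} with every term of the momentum equation in $L^2((0,\infty);L^2)$ (see \cref{rk:matDer}), satisfies the energy \emph{equality}. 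The admissibility of $u_2$ as a test field is precisely what \eqref{def:S} provides, namely $u_2\in L^\infty((0,\infty);H^1)$ with $\partial_t u_2,\nabla^2 u_2,\dot u_2\in L^2((0,\infty);L^2)$. Since $(\rho_1,u_1)$ is only energy-admissible, this yields the analogue of the identity \eqref{eq:energydiffintro} but with $\le$ in place of $=$:
\begin{align*}
    \frac12\norm{\sqrt{\rho_1(t)}\,\delta u(t)}_{L^2}^2 + \nu\int_0^t \norm{\nabla\delta u}_{L^2}^2\ds
    &\le -\int_0^t\!\!\int_{\R^2}\delta\rho\,\dot u_2\cdot\delta u \\
    &\quad - \int_0^t\!\!\int_{\R^2}\rho_1\,\delta u\otimes\delta u:\nabla u_2.
\end{align*}
Making this rigorous requires the usual time-mollification and density arguments together with the weak-in-time continuity of $\sqrt{\rho_1}\,u_1$, but presents no essential difficulty.

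The second term on the right is quadratic in $\delta u$ and is the easy one: using $\norm{\rho_1}_{L^\infty}\le C_0$, Ladyzhenskaya's inequality $\norm{\delta u}_{L^4}^2\lesssim \norm{\delta u}_{L^2}\norm{\nabla\delta u}_{L^2}$, and Young's inequality, I would bound it by $\frac{\nu}{2}\int_0^t\norm{\nabla\delta u}_{L^2}^2\ds + C\int_0^t\norm{\nabla u_2}_{L^2}^2\norm{\delta u}_{L^2}^2\ds$. The first piece is absorbed on the left; since the data contain no vacuum, $\rho_1\ge c_0>0$, so $\norm{\delta u}_{L^2}^2\le c_0^{-1}\int\rho_1\abs{\delta u}^2=2c_0^{-1}\mathcal{E}$, and the second piece becomes $\int_0^t g(s)\,\mathcal{E}(s)\ds$ with $g\coloneqq 2c_0^{-1}C\norm{\nabla u_2}_{L^2}^2\in L^1(0,\infty)$ (as $\nabla u_2\in L^2((0,\infty)\times\R^2)$).

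The main obstacle is the first term, $-\int_0^t\int\delta\rho\,\dot u_2\cdot\delta u$, which is only \emph{linear} in $\delta\rho$ and hence not directly comparable to $\mathcal{E}$. To recover a quadratic structure in $\delta u$ I would use the transport equation \eqref{eq:deltarhointro} for $\delta\rho$ and pass to the flow $X$ of $u_2$: representing $\delta\rho(s)$ as a time integral of $\div(\rho_1\delta u)$ along the characteristics of $u_2$ and integrating by parts to transfer the spatial derivative onto $\dot u_2\cdot\delta u$, one reaches the estimate \eqref{eq:productestimateintro}, in which the factor $\norm{\nabla(\dot u_2(s)\cdot\delta u(s))}_{L^{4/3}}$ appears. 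The derivative $\nabla\dot u_2$ occurring there is controlled precisely by the weighted bound $A_2(u_2)$ from \eqref{def:S}, the singular time weight near $s=0$ being compensated by the vanishing of $\delta u$ at the initial time. This is the crux of the argument and the only place where the \emph{full} strong regularity of $(\rho_2,u_2)$ is genuinely used; for the Lagrangian bookkeeping I would follow \cite[Section~1.4]{CrinBaratSkondricViolini2024}. The outcome is a bound of the whole right-hand side by $\frac{\nu}{2}\int_0^t\norm{\nabla\delta u}_{L^2}^2\ds+\int_0^t f(s)\,\mathcal{E}(s)\ds$ with $f\in L^1_\loc([0,\infty))$.

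Finally, after absorbing the remaining gradient term, Grönwall's inequality with $\mathcal{E}(0)=0$ gives $\mathcal{E}\equiv 0$, hence $u_1=u_2$ almost everywhere. Substituting $\delta u\equiv 0$ into the continuity equation yields $\partial_t\delta\rho+\div(u_2\,\delta\rho)=0$ with $\delta\rho(0)=0$; since $u_2$ is divergence-free and Sobolev-regular and $\delta\rho$ is bounded, the DiPerna--Lions uniqueness theory for the linear transport equation forces $\delta\rho\equiv 0$. Therefore $(\rho_1,u_1)=(\rho_2,u_2)$, which is the claim of \cref{thm:weakstrong1}.
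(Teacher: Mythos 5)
Your proposal is correct and follows essentially the same route as the paper: the paper does not reprove this statement but imports it as a special case of \cite[Theorem~1.6]{CrinBaratSkondricViolini2024}, whose strategy --- the relative-energy inequality \eqref{eq:energydiffintro}, Ladyzhenskaya/Young absorption of the quadratic term, the Lagrangian representation of $\delta\rho$ leading to \eqref{eq:productestimateintro} with $\nabla\dot u_2$ controlled by the weighted bounds in \eqref{def:S}, and finally Gr\"onwall plus DiPerna--Lions for the density --- is exactly what you sketch and is also how the paper proves the critical variant \cref{thm:weakstrong2D} in \cref{sec:uniqueness}. Like the paper, you defer the genuinely technical step (the Lagrangian bookkeeping and the weighted estimate of $\norm{\nabla(\dot u_2\cdot\delta u)}_{L^{4/3}}$) to \cite{CrinBaratSkondricViolini2024}, so nothing essential is missing.
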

We are now able to show that our notion of strong solutions is consistent with the one introduced in~\cite{PaicuZhangZhang2013}. Moreover, a solution which satisfies \cref{eq:strongCond} always arises from an $H^1$ initial velocity.
\begin{theorem}\label{thm:pzz}
     Let us assume that the initial data $(\rho_0,u_0)$ satisfy \cref{ass:data} and let $(\rho,u)$ be a Leray--Hopf solution of \cref{eq:ns}. The following two conditions are equivalent:
   \begin{enumerate}[label=(\roman*)]
        \item \label{it:strongC}$(\rho,u)$ is a strong solution;
        \item\label{it:strongIn} $u_0 \in H^1(\R^2)$.
    \end{enumerate}
Furthermore, if one of the two previous conditions holds, then the (unique) Leray--Hopf solution $(\rho, u)$ satisfies \cref{def:S} and, after the modification on a negligible set of times, we have that 
\begin{align*}
    u \in C([0,\infty);H^1(\R^2)).
\end{align*}
\end{theorem}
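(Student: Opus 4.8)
The plan is to prove the two implications separately and then read off the additional regularity. The implication \cref{it:strongIn} $\Rightarrow$ \cref{it:strongC} will be \emph{soft}, resting on the existence theory of \cite{PaicuZhangZhang2013} together with the weak–strong uniqueness in \cref{thm:weakstrong1}, whereas the reverse implication \cref{it:strongC} $\Rightarrow$ \cref{it:strongIn} carries the analytic content and is obtained by a time-regularity/trace argument. The \enquote{furthermore} part then follows by combining the two.

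For \cref{it:strongIn} $\Rightarrow$ \cref{it:strongC}, I would argue as follows. Since $u_0 \in H^1(\R^2)$ and the density is bounded and bounded away from zero, \cite{PaicuZhangZhang2013} produces a solution $(\widetilde\rho,\widetilde u)$ with the same initial data satisfying the bounds \cref{def:S}; in particular $\pt\widetilde u,\ \nabla^2\widetilde u\in L^2((0,\infty);L^2(\R^2))$, so $(\widetilde\rho,\widetilde u)$ is a strong solution in the sense of \cref{def:strongsol}. As $(\rho,u)$ is a Leray--Hopf solution with the same data, \cref{thm:weakstrong1} forces $(\rho,u)=(\widetilde\rho,\widetilde u)$; hence $(\rho,u)$ is strong and, moreover, satisfies \cref{def:S}. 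This already yields the $\bmo$-free first half of the additional regularity claim.

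For the converse \cref{it:strongC} $\Rightarrow$ \cref{it:strongIn}, suppose $(\rho,u)$ is strong. Combining \cref{eq:strongCond} with the Leray--Hopf bounds (and using $\rho\ge c_0>0$ to pass from $\sqrt{\rho}\,u$ to $u$) gives
\[ u\in L^\infty((0,\infty);L^2(\R^2)),\quad \nabla u\in L^2((0,\infty)\times\R^2),\quad \nabla^2 u,\ \pt u\in L^2((0,\infty);L^2(\R^2)). \]
First, $u\in L^2(0,T;L^2)$ together with $\pt u\in L^2(0,T;L^2)$ gives $u\in H^1(0,T;L^2)\hookrightarrow C([0,T];L^2)$ for every $T>0$; since every Leray--Hopf solution attains $u_0$ weakly in $L^2$ at $t=0$, this strong continuity forces $u(0)=u_0$ in $L^2$. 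Next, $\nabla u\in L^2(0,T;H^1)$ (because $\nabla u,\ \nabla^2 u\in L^2 L^2$) together with $\pt\nabla u=\nabla\pt u\in L^2(0,T;H^{-1})$ lets me invoke the Lions--Magenes lemma for the Gelfand triple $H^1(\R^2)\hookrightarrow L^2(\R^2)\hookrightarrow H^{-1}(\R^2)$, which yields $\nabla u\in C([0,T];L^2)$. Combining the two continuity statements gives $u\in C([0,T];H^1)$ and, in particular, a well-defined trace $\nabla u(0)\in L^2$ which, by the identification $u(0)=u_0$, equals $\nabla u_0$. Hence $u_0\in H^1(\R^2)$. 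The same two steps, applied once either condition is known to hold (so that $\pt u,\ \nabla^2 u\in L^2 L^2$ and the Leray--Hopf bounds are available), produce $u\in C([0,T];H^1)$ for every $T>0$, i.e.\ $u\in C([0,\infty);H^1(\R^2))$ after modification on a null set of times.

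The main obstacle is precisely this converse implication: the only genuine work is to equip $\nabla u$ with a continuous $L^2$-trace at $t=0$ and to identify it with $\nabla u_0$. This is where the time regularity $\pt u\in L^2 L^2$ is indispensable, since without it one cannot upgrade the merely weak attainment of the initial datum in the Leray--Hopf class to the strong $H^1$-convergence $u(t)\to u_0$ needed to conclude $u_0\in H^1$; the Lions--Magenes lemma is exactly the device that converts the two one-sided regularities $\nabla u\in L^2 H^1$ and $\pt\nabla u\in L^2 H^{-1}$ into the required continuity.
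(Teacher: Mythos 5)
Your implication \cref{it:strongIn} $\Rightarrow$ \cref{it:strongC} coincides with the paper's (existence from \cite{PaicuZhangZhang2013} plus \cref{thm:weakstrong1}), but your converse takes a genuinely different and softer route. The paper proves \cref{it:strongC} $\Rightarrow$ \cref{it:strongIn} by a backward-in-time energy estimate on the mollified momentum equation: it picks $T$ with $u(T)\in H^1$, multiplies $\nu\Delta u_\gamma=(\rho\dot u)_\gamma+\nabla P_\gamma$ by $\partial_t u_\gamma$, integrates from $t$ to $T$, kills the pressure term with the divergence-free condition, and obtains $\norm{\nabla u_\gamma(t)}_{L^2}^2\leq C$ uniformly in $\gamma$ and $t$, hence $\nabla u_0\in L^2$. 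You instead never touch the equation: from $\nabla u\in L^2(0,T;H^1)$ and $\partial_t\nabla u=\nabla\partial_t u\in L^2(0,T;H^{-1})$ you invoke the Lions--Magenes lemma for the triple $H^1\hookrightarrow L^2\hookrightarrow H^{-1}$ to get $\nabla u\in C([0,T];L^2)$ and read off the trace at $t=0$. Both arguments are valid; the paper's buys an explicit quantitative bound on $\sup_t\norm{\nabla u(t)}_{L^2}$ (part of \cref{def:S}, though this is recovered anyway in Part 2 via weak--strong uniqueness), while yours is shorter, avoids the mollification and the pressure, and in fact gives a cleaner justification of the \enquote{furthermore} claim $u\in C([0,\infty);H^1)$ than the paper's remark that $C_tL^2\cap L^\infty_tH^1$ implies $C_tH^1$ (which, taken literally, only yields weak $H^1$-continuity). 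One step you should spell out: the identification $u(0)=u_0$. The Leray--Hopf formulation only gives weak attainment of the \emph{momentum} against divergence-free test functions, so a priori you get $\rho_0\big(u(0)-u_0\big)=\nabla q$ for some $q$ with $\nabla q\in L^2$; since $u(0)-u_0\in L^2_\sigma$, the Helmholtz orthogonality and $\rho_0\geq c_0>0$ give $\int\rho_0|u(0)-u_0|^2=\int\nabla q\cdot(u(0)-u_0)=0$, whence $u(0)=u_0$. (The paper sidesteps this by building $u\in C([0,\infty);L^2)$ with the correct initial value into \cref{def:strongsol}.) With that line added, your proof is complete.
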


The proof of \cref{thm:pzz} relies on the use of \cref{thm:weakstrong1}. To do so, it is necessary to verify that strong solutions satisfy the energy equality. For the sake of completeness, we present this result in the following lemma.

\begin{lemma}\label{lem:energyeq1}
     Let us assume that the initial data $(\rho_0,u_0)$ satisfy \cref{ass:data} and let $(\rho,u)$ be a strong solution of \cref{eq:ns}. Then $(\rho,u)$ satisfies the energy equality.
\end{lemma}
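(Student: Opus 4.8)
The plan is to integrate in time the pointwise-in-$t$ energy balance obtained by pairing the momentum equation with the velocity $u$ itself. The whole point is that the extra regularity of a strong solution makes $u$ an admissible test function in the inertia term, which is precisely what fails for a generic Leray--Hopf solution (as stressed in the introduction).

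First I would upgrade the regularity to the one recorded in \cref{rk:matDer}. From $\partial_t u,\nabla^2 u\in L^2((0,\infty);L^2(\R^2))$ one computes $\frac{\d}{\dd t}\norm{\nabla u}_{L^2}^2=-2\int_{\R^2}\Delta u\cdot\partial_t u\dd x$, whose right-hand side lies in $L^1(0,\infty)$ by Cauchy--Schwarz; since $\norm{\nabla u(t)}_{L^2}^2\in L^1(0,\infty)$ is finite for a.e.\ $t$, its absolutely continuous representative is bounded, so $\nabla u\in L^\infty((0,\infty);L^2(\R^2))$. Feeding this into the Gagliardo--Nirenberg interpolation of \cref{rk:matDer} gives $u\cdot\nabla u\in L^2((0,\infty);L^2(\R^2))$, hence $\dot u\in L^2((0,\infty);L^2(\R^2))$ and, because $0<c_0\le\rho\le C_0$, every term of the momentum equation written as $\rho\dot u-\nu\Delta u+\nabla P=0$ belongs to $L^2((0,\infty);L^2(\R^2))$; in particular the equation holds pointwise for a.e.\ $t$.

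Next, for a.e.\ $t$ I pair this identity with $u(t)$ and integrate over $\R^2$. The pressure term vanishes, $\int_{\R^2}\nabla P\cdot u\dd x=0$, by the $L^2$-orthogonality of gradients and divergence-free fields (equivalently, an integration by parts using $\div u=0$, legitimate since $\nabla P\in L^2$ and $u\in H^1$); the viscous term gives $-\nu\int_{\R^2}\Delta u\cdot u\dd x=\nu\norm{\nabla u}_{L^2}^2$ after an integration by parts justified by $u(t)\in H^2(\R^2)$ for a.e.\ $t$; and there remains the inertia term $\int_{\R^2}\rho\dot u\cdot u\dd x$.

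The heart of the matter, and the main obstacle, is to identify $\int_s^t\int_{\R^2}\rho\dot u\cdot u\dd x\dd\tau$ with the increment $E(t)-E(s)$ of the kinetic energy $E(t)\coloneqq\frac12\int_{\R^2}\rho|u|^2\dd x$. The underlying algebraic identity, using the continuity equation $\partial_t\rho+\div(\rho u)=0$ together with $\div u=0$, is
\[
\partial_t\Big(\tfrac12\rho|u|^2\Big)+\tfrac12\div\big(\rho u\,|u|^2\big)=\rho\dot u\cdot u,
\]
which upon spatial integration makes the transport term disappear (for a.e.\ $t$ one has $\rho u\,|u|^2\in L^1(\R^2)$ since $u(t)\in L^2\cap L^\infty$), giving $E'(t)=\int_{\R^2}\rho\dot u\cdot u\dd x$ in the sense of distributions in time. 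Making this chain rule and, above all, the cancellation between $\tfrac12\partial_t\rho\,|u|^2$ and $\tfrac12\div(\rho u)|u|^2$ rigorous for a merely bounded density requires the renormalization of the continuity equation: one mollifies in space, writes the equation for $\tfrac12\rho_\gamma|u|^2$, and controls the resulting commutators by the estimates of \cref{app:conv-comm} before letting $\gamma\to0$. Once this is in place, $E\in W^{1,1}_{\loc}([0,\infty))$ with $E'=-\nu\norm{\nabla u}_{L^2}^2$, so $E$ is absolutely continuous and integrating over $[s,t]$ yields the energy equality for all $0\le s\le t$; the inclusion of the endpoint $s=0$ uses that $E$ is continuous up to $t=0$, which follows from $u\in C([0,\infty);L^2(\R^2))$ and the weak continuity in time of $\rho$.
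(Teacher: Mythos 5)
Your proof is correct in strategy, but it takes a genuinely different route from the paper's. The paper never passes to the strong form of the momentum equation: it tests the weak formulations of the mass and momentum equations directly with the spatially mollified velocity $u_\gamma$, which is admissible because $u_\gamma\in H^1((0,T);H^1)\cap L^2((0,T);H^2)$ (this admissibility is outsourced to \cite[Lemma 3.3]{CrinBaratSkondricViolini2024}), and then lets $\gamma\to0$ using $u_\gamma\to u$ in $L^2_\loc L^2$, $L^4L^4$ and $C_\loc([0,\infty);L^2)$. You instead derive the pointwise momentum balance, pair it with $u$ over $\R^2$, and recover $E'(t)=\int\rho\dot u\cdot u\,\dd x$ by renormalizing the continuity equation; this is essentially a global-in-space version of the suitability argument of \cref{lem:suitable}, simplified by the fact that integrating over all of $\R^2$ kills the pressure via Helmholtz orthogonality (so no $\bmo$ bound is needed, unlike in \cref{lem:StrongIneq}). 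Two remarks. First, your renormalization step is actually easier than you make it sound: once you write $\partial_t\rho_\gamma=-\div((\rho u)_\gamma)$ and integrate by parts against $\tfrac12|u|^2\in H^1(\R^2)$ (valid for a.e.\ $t$ since $u(t)\in H^2\subset L^\infty$), the dangerous term $\div(\rho u)|u|^2$ never appears as a product of two distributions, and the limit $\gamma\to0$ goes through without invoking the commutator cancellations of \cref{app:conv-comm}; the commutators are only needed for the \emph{local} energy balance. Second, be aware that your assertion that the momentum equation holds a.e.\ in the form $\rho\dot u-\nu\Delta u+\nabla P=0$ already conceals the same product-rule manipulations with the merely bounded $\rho$ (to pass from $\partial_t(\rho u)$ and $\div(\rho u\otimes u)$ to $\rho\dot u$), so the renormalization machinery is needed earlier than the step you flag as the ``heart of the matter''; the paper glosses over this too (\cref{rk:matDer}), so it is not a gap relative to the paper's own standard. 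Your Step 1 ($\nabla u\in L^\infty((0,\infty);L^2)$) is not needed for the paper's proof of this lemma and in fact reproves Part 1 of \cref{thm:pzz}.
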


\begin{proof}
   By \cite[Lemma 1.31]{RobinsonRodrigoSadowski2016}, we have that, for every $t \geq 0$, 
    \begin{align*}
        u(t) = u_0 + \int_0^t \partial_s u(s) \dd s,
    \end{align*}
    and, for every $\gamma > 0$,
    \begin{align}\label{eq:molliftimederi}
        u_\gamma(t) = (u_0)_\gamma + \int_0^t (\partial_s u(s))_\gamma \dd s.
    \end{align}
    From this we see that, for every $T>0$,
    \begin{align*}
        u_\gamma \in H^1((0,T);H^1(\R^2)) \cap L^2((0,T);H^2(\R^2)).
    \end{align*}
    Hence, $u_\gamma$ is an admissible test function in the weak formulation for $(\rho,u),$ see \cite[Lemma 3.3]{CrinBaratSkondricViolini2024}. Using first the weak formulation of the conservation of mass and then the weak formulation of the momentum equation, we obtain
    \begin{align*}
        \frac{1}{2} \int_{\R^d} &\rho(t) |u_\gamma(t)|^2 \dd x - \frac{1}{2} \int_{\R^d} \rho(0) |u_\gamma(0)|^2 \dd x 
      \\& = \int_0^t\int_{\R^d} \rho \partial_s u_\gamma \cdot u_\gamma + \rho u \otimes u_\gamma : 
        \nabla u_\gamma \dd x \dd s\\
        &= \int_0^t\int_{\R^d} \rho \partial_s u_\gamma \cdot (u_\gamma - u) + \rho u \otimes 
        (u_\gamma - u) : \nabla u_\gamma \dd x \dd s\\
        &\quad + \int_0^t \int_{\R^2} \rho \partial_s u_\gamma \cdot u 
        + \rho u \otimes u : \nabla u_\gamma \dd x \dd s\\
        &= \int_0^t\int_{\R^d} \rho \partial_s u_\gamma \cdot (u_\gamma - u) + \rho u \otimes 
        (u_\gamma - u) : \nabla u_\gamma \dd x \dd s\\
        &\quad + \nu \int_0^t \int_{\R^2} \nabla u : \nabla u_\gamma \dd x \dd s + 
        \int_{\R^d} \rho(t) u_\gamma(t) \cdot u(t) \dd x- \int_{\R^d} (u_0)_\gamma  u_0  \dd x.
    \end{align*}
    Employing the a priori bound on $u$ and \cref{eq:molliftimederi}, we get
    \begin{align*}
        u_\gamma \to u \qquad \text{in  $L^2_\loc((0,\infty);L^2(\R^2))$
         and  $L^4((0,\infty);L^4(\R^2))$} \quad \text{as $\gamma \to 0$}.
    \end{align*}
    Furthermore, since $(\partial_t u)_\gamma \to \partial_t u$ in $L^2((0,\infty);L^2(\R^2))$, we deduce that 
    \begin{align*}
        u_\gamma \to u \qquad \text{in $C_\loc([0,\infty);L^2(\R^2))$} \quad \text{as $\gamma \to 0$}.
    \end{align*}
    This allows to pass to the limit and deduce that, \text{for every $t \in (0,\infty)$}, 
    \begin{align*}
       & \frac{1}{2} \int_{\R^2} \rho(t) |u(t)|^2 \dd x - \frac{1}{2} \int_{\R^2} \rho(0) 
        |u(0)|^2 \dd x
       \\ &= \nu \int_0^t \int_{\R^2} |\nabla u(s)|^2 \dd x \dd s
        + \int_{\R^2} \rho(t) |u(t)|^2 \dd x -\int_{\R^2} \rho(0) |u(0)|^2 \dd x, 
    \end{align*}
    which is equivalent to the energy equality.
\end{proof}

\begin{proof}[Proof of \cref{thm:pzz}]
We prove the two implications.

\uline{Part 1:} \emph{\cref{it:strongC} $\implies$ \cref{it:strongIn}.} Since $u \in C([0,\infty); L^2(\R^2))$, if we prove that $u \in L^\infty((0,\infty); H^1(\R^2))$, then it follows that $u \in C([0,\infty); H^1(\R^2))$, which concludes the proof. By the energy equality, there exists a time $T > 0$ for which $u(T) \in H^1(\R^2)$. Due to \cref{rk:matDer}, the momentum equation holds almost everywhere. Convolving it with a spatial mollification kernel, we obtain the equality 
\[
\nu \Delta u_\gamma = (\rho \dot{u})_\gamma + \nabla P_\gamma.
\]
Scalar-multiplying this equation by \( \partial_t u_\gamma \) and integrating in space and time, we get, for every \( t \geq 0 \),
\[
\int_t^T \int_{\R^2} \nu \Delta u_\gamma \cdot \partial_s u_\gamma \, \dd x \, \dd s = \int_t^T \int_{\R^2} (\rho \dot{u})_\gamma \cdot \partial_t u_\gamma + \nabla P_\gamma \cdot \partial_s u_\gamma \, \dd x \, \dd s.
\]
Using integration by parts on the left-hand side and the divergence-free condition yields  
\[
- \frac{\nu}{2} \int_t^T \frac{\dd}{\dd s} \norm{\nabla u(s)}^2_{L^2(\R^2)} \, \dd \tau = \int_t^T \int_{\R^2} (\rho \dot{u})_\gamma \cdot \partial_s u_\gamma \, \dd x \, \dd s.
\]
Since $\partial_t u$ and $\dot{u}$ belong to $L^2((0,\infty)\times \R^2)$, the right-hand side is uniformly bounded in $\gamma$. Hence, for every \( t \geq 0 \),
\[
\norm{\nabla u_\gamma(t)}^2_2 \leq C_1 + \norm{\nabla u_\gamma(T)}^2_2 \leq C_1 + C_2,
\]
where $C_1$ and $C_2$ do not depend on $\gamma$ because \( u(T) \in H^1(\R^2) \). This concludes the proof.

\uline{Part 2:} \emph{\cref{it:strongIn} $\implies$ \cref{it:strongC}}. Owing to~\cite[Theorem 1.1]{PaicuZhangZhang2013} (case $s=1$),  there exists a Leray--Hopf weak solution $(\widetilde{\rho}, \widetilde{u})$ with initial data $(\rho_0, u_0)$ such that $A_1(\widetilde{\rho}, \widetilde{u})$ and $A_2(\widetilde{\rho}, \widetilde{u})$ are bounded. Using \cref{thm:weakstrong1}, we obtain that $(\rho, u) = (\widetilde{\rho}, \widetilde{u})$, as $(\rho,u)$ satisfies the energy equality by \cref{lem:energyeq1}. Then $A_1(\rho, u) = A_1(\widetilde{\rho}, \widetilde{u})$ and $A_2(\rho, u) = A_2(\widetilde{\rho}, \widetilde{u})$, which proves the implication and the additional regularity \cref{def:S}.
\end{proof}

\begin{remark}\label{rk:finrk}
    We can weaken the assumption on $(\rho, u)$ in \cref{thm:pzz}: it is sufficient that $(\rho, u)$ is a Leray--Hopf weak solution that satisfies the energy inequality for almost every \( t > 0 \). Indeed, in \cref{thm:weakstrong1}, it is possible to require only that the weak solution satisfies the energy inequality for almost every \( t > 0 \) (see \cite[Lemma 4.1.]{CrinBaratSkondricViolini2024}). 
\end{remark}

\subsection{Immediately strong solutions} 
\label{ssec:Immstrong}
The definition we adopt for immediately strong solutions is a modification of the definition of strong solutions in \cref{def:strongsol}. In this case, the momentum equation holds in $ L^2((0, \infty);\, L^2(\R^2))$ and not at $t=0$.

\begin{definition}[Immediately strong solutions]\label{def:Immstrongsol}
 We say that a Leray--Hopf solution \( (\rho, u) \) with initial data \( (\rho_0, u_0) \) satisfying \cref{ass:data} is an \textit{immediately strong solution} to \cref{eq:ns} if
\begin{align}\label{eq:ImmstrongCond}
    \partial_t u,\, \nabla^2 u \in L^2((\eps, \infty);\, L^2(\R^2)) \qquad \text{for every $\varepsilon >0$}.
\end{align}
\end{definition}
Unlike the strong solutions that can be produced by $H^1$ initial velocities (see \cref{thm:pzz}), the immediately strong solutions can be derived from $L^2$ initial velocities (see \cref{lem:exist}). As stated in \cref{thm:main2D}, we are interested in the properties satisfied by the immediately strong solutions compared to the ones satisfied by the strong solutions. The counterpart of \cref{def:S} are the following quantities\footnote{~In the presence of vacuum, one can only find bounds for the quantities $\sqrt{\rho} \pt u$ and $\sqrt{\rho} \pt {u}$; however, as we work with densities that are bounded away from $0$, we can drop the prefactor $\sqrt{\rho}.$}:
\begin{align}\label{def:IS}
\begin{aligned}
    & A^0_1(u) \coloneqq  \esssup_{s \in (0, \infty)} s\int_{\R^2} |\nabla u|^2 \dd x + \int_0^\infty s \int_{\R^2} |\partial_s u|^2 + |\nabla^2 u|^2 + \abs{\nabla P}^2 \dd x \dd s,\\
    & A^0_2(u) \coloneqq \esssup_{s \in (0, \infty)} s^2 \int_{\R^2}  |\partial_s u|^2 + |\nabla^2 u|^2 + \abs{\nabla P}^2 \dd x + \int_0^\infty s^2 \int_{\R^2}  |\partial_s \nabla u|^2 \dd x \dd s,\\
    &  A^0_3(u) \coloneqq  \esssup_{s \in (0, \infty)} s^3 \int_{\R^2}  |\nabla \dot{u}|^2 \dd x + \int_0^\infty s^3\int_{\R^2}  |\nabla^2 \dot{u}|^2 + |\nabla \dot{P}|^2 + |\ddot{u}|^2  \dd x \dd s.
    \end{aligned}
\end{align}
The quantity $A^0_3(u)$ was introduced in~\cite[Eq.~(3.33)]{Danchin2024}.
\begin{remark}
It is also possible to consider $u_0 \in H^\eta(\R^2)$, for $\eta>0$, instead of $u_0 \in H^1(\R^2)$. Indeed, in~\cite[Theorem 1.1]{PaicuZhangZhang2013}, the authors prove that starting from $u_0 \in H^\eta(\R^2)$, the estimates \cref{def:S} still hold with the weight $\sigma^{1-\eta}(s)$ in $A_1(u)$ and $\sigma^{2-\eta}(s)$ in $A_2(u)$. In this sense, $A^0_1(u)$ and $A^0_2(u)$ correspond to the limit case $\eta=1$.   
 \end{remark}

\section{Equivalence among \texorpdfstring{\cref{it:immediately}, \cref{it:strong}, and \cref{it:A1A2A3}}{(i), (ii), and (iii)}}
\label{sec:i-ii-iii}

\subsection{Strong energy inequality}\label{subsec:SEI}

In this section, we show that a Leray--Hopf weak solution is immediately strong if and only if it satisfies the strong energy inequality, \ie, \cref{it:immediately} $\iff$ \cref{it:strong}. This is strongly related to the semi-flow property of Leray--Hopf weak solution (which holds if they satisfy the strong energy inequality): if we consider a time translation of the solution, it is still a Leray--Hopf weak solution with translated initial data. 

We start with some remarks on the set of continuity points of the momentum.

\begin{lemma}\label{lem:semi-flow}
    Let $(\rho,u)$ be a Leray--Hopf solution of \cref{eq:ns}. There exists a measurable set $I \subset [0,\infty)$ such that $0 \in I$, $\mathscr{L}^1([0,\infty) \setminus I)=0$ and \footnote{~By $ \rho u \in C_w(I;\,L^2_\sigma)$, we mean that, for every sequence $\{ t_n \}_{n\in \mathbb N} \subset I$ such that $ t_n \to t \in I$ and for every $\varphi \in L_\sigma^2(\mathbb{R}^2)$, we have
    \begin{align*}
        \int_{\mathbb{R}^2} \rho (t_n,x) u (t_n,x) \cdot \varphi (x) \, \mathrm{d}x \rightarrow \int_{\mathbb{R}^2} \rho(t,x) u(t,x) \cdot \varphi (x) \, \mathrm{d}x.
    \end{align*}}
    \begin{align*}
        \rho u \in C_w(I;\,L^2_\sigma).    
    \end{align*}
    Furthermore, we can modify $\rho$ on a negligible set of times such that
    \begin{align*}
        \rho \in C_{w^*}([0,\infty);\,L^\infty(\R^2)).
    \end{align*}
\end{lemma}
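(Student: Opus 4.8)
The plan is to exploit separately the two weak formulations in \cref{def:lerayhopfsol}: the solenoidal momentum identity, in which the uncontrolled pressure is annihilated by divergence-free test functions, and the mass identity. In each case I first reduce the time-continuity to a one-dimensional statement by freezing the spatial test function, and then upgrade to all test functions by a density plus uniform-boundedness argument. As preliminary \textbf{a priori bounds}, from \cref{ass:data} and the energy inequality (item (iii) of \cref{def:lerayhopfsol}) together with $c_0\le\rho\le C_0$ I obtain $\rho u\in L^\infty((0,\infty);L^2(\R^2))$ with $\norm{\rho u(t)}_{L^2}^2\le C_0\int_{\R^2}\rho\abs{u}^2\,\dx\le 2C_0E_0=:M^2$ for every $t$, that $\rho\abs{u}^2\in L^\infty((0,\infty);L^1(\R^2))$, and that $\nabla u\in L^2((0,\infty)\times\R^2)$. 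Note that no $L^4$ bound on $u$ is needed.

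For the \textbf{momentum}, I fix $\psi\in C^\infty_{c,\sigma}(\R^2;\R^2)$ and set $g_\psi(t):=\int_{\R^2}\rho(t)u(t)\cdot\psi\,\dx$. Testing the momentum weak formulation with $\varphi(s,x)=\chi(s)\psi(x)$ for $\chi\in C_c^\infty([0,\infty))$, the pressure drops out and I read off that $g_\psi$ has distributional time-derivative $F_\psi(s)=\int_{\R^2}\rho u\otimes u:\nabla\psi\,\dx-\nu\int_{\R^2}\nabla u:\nabla\psi\,\dx$, where $F_\psi\in L^2_\loc([0,\infty))$ (the first term is bounded pointwise in $t$ by $M^2\norm{\nabla\psi}_{L^\infty}$, the second lies in $L^2_t$ since $\nabla u\in L^2_{t,x}$), together with initial value $g_\psi(0)=\int_{\R^2}\rho_0u_0\cdot\psi\,\dx$. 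Hence $g_\psi=\int_{\R^2}\rho_0u_0\cdot\psi\,\dx+\int_0^{\,\cdot}F_\psi$ admits a continuous representative $\overline{g_\psi}$ on $[0,\infty)$. To produce a \textbf{single null set}, I choose a countable family $\{\psi_k\}\subset C^\infty_{c,\sigma}$ dense in $L^2_\sigma(\R^2)$; each $g_{\psi_k}$ agrees with $\overline{g_{\psi_k}}$ off a null set $N_k$, and I put $I:=\{0\}\cup\big([0,\infty)\setminus\bigcup_k N_k\big)$, so that $\mathscr{L}^1([0,\infty)\setminus I)=0$ and $0\in I$. For $t_n\to t$ in $I$ one has $g_{\psi_k}(t_n)\to g_{\psi_k}(t)$ for every $k$; since $\norm{\rho u(t)}_{L^2}\le M$ uniformly on $I$, a standard three-$\eps$ argument extends this to $\int_{\R^2}\rho u\cdot\varphi\,\dx$ for all $\varphi\in L^2_\sigma$, which is exactly $\rho u\in C_w(I;L^2_\sigma)$ in the sense of the footnote, including continuity at $t=0$ against $\rho_0u_0$.

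For the \textbf{density}, I proceed analogously using the mass equation: for $\phi\in C^\infty_c(\R^2)$ set $h_\phi(t):=\int_{\R^2}\rho(t)\phi\,\dx$; testing with $\chi(s)\phi(x)$ gives $h_\phi'=\int_{\R^2}\rho u\cdot\nabla\phi\,\dx\in L^\infty_\loc$ (bounded since $\rho u\in L^\infty_tL^2_x$ and $\nabla\phi\in L^2$), so $h_\phi\in W^{1,\infty}_\loc\subset C$. Modifying $\rho$ on a negligible set of times to select these continuous representatives, and using the uniform bound $\norm{\rho(t)}_{L^\infty}\le C_0$ together with the density of $C_c^\infty$ in $L^1$, the same density argument (now with $L^\infty=(L^1)^*$) yields $\rho\in C_{w^*}([0,\infty);L^\infty(\R^2))$.

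The hard part is conceptual rather than computational: the complete absence of pressure control. This is precisely why the conclusion is weak continuity \emph{against divergence-free test functions}, i.e.\ continuity into $L^2_\sigma$ in the sense of the footnote, rather than into all of $L^2$ --- only solenoidal test functions remove $\nabla P$ from the momentum identity, and $\rho u$ is itself not expected to be divergence-free. The remaining subtlety is the bookkeeping required to obtain a single exceptional null set valid for every test function simultaneously, which is handled by the separability of $L^2_\sigma$ and $L^1$ as used above; the rest of the argument is routine.
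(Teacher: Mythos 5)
Your proof is correct and is essentially the argument the paper delegates to its two references: the weak continuity of the momentum is exactly the content of the cited Lemma~3.2 of \cite{CrinBaratSkondricViolini2024} (proved, as you do, by testing with $\chi(s)\psi(x)$ so that the pressure never appears, observing that $t\mapsto\int\rho u\cdot\psi\,\dx$ has an $L^1_\loc$ distributional derivative and hence a continuous representative, and upgrading to all of $L^2_\sigma$ via a countable dense family of solenoidal test functions together with the uniform $L^\infty_tL^2_x$ bound on $\rho u$), while the weak-$*$ continuity of $\rho$ is the standard statement for bounded distributional solutions of the continuity equation cited from \cite{Crippa2008}. The only step worth spelling out a little more is the final modification of $\rho$: at each exceptional time $t$ one should \emph{define} $\rho(t)$ as the weak-$*$ limit of $\rho(t_n)$ along good times $t_n\to t$, which exists and is independent of the sequence because $\norm{\rho(t_n)}_{L^\infty}\leq C_0$ gives weak-$*$ compactness and the limits of $\int\rho(t_n)\phi_j\,\dx$ are already determined for a countable family $\{\phi_j\}$ dense in $L^1$.
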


\begin{proof}
    For the first part, we refer to~\cite[Lemma 3.2]{CrinBaratSkondricViolini2024} and for the second part, we note that $\rho$ is a solution of the transport equation and apply~\cite[Remark 2.2.2]{Crippa2008}.
\end{proof}
\begin{corollary}\label{cor:distSol}
    Let $(\rho,u)$ be a Leray--Hopf solution of \cref{eq:ns} and let $t_0 \in I$, with $I$ is as in \cref{lem:semi-flow}. Let us define 
    $(\rho_{t_0},u_{t_0})$ by
    \begin{align*}
        \rho_{t_0}(t)\coloneqq \rho(t+t_0),\qquad u_{t_0}\coloneqq u(t+t_0).
    \end{align*}
    Then $(\rho_{t_0},u_{t_0})$ is a distributional solution of \cref{eq:ns} with initial data $(\rho (t_0), u(t_0)) $.
\end{corollary}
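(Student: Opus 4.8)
The plan is to verify directly that the time-translated pair $(\rho_{t_0}, u_{t_0})$ satisfies the weak formulation in \cref{def:lerayhopfsol}(ii) with the shifted initial data $(\rho(t_0), u(t_0))$. Since $(\rho, u)$ is already a distributional solution on $(0,\infty) \times \R^2$, the interior equations are satisfied after translation by a straightforward change of variables $s \mapsto s + t_0$; the only genuine content is to identify the correct initial-data terms. First I would fix a test function $\varphi \in C_{c,\sigma}^\infty([0,\infty) \times \R^2; \R^2)$ for the momentum equation (the mass and divergence equations are handled identically) and define the shifted test function $\psi(s,x) \coloneqq \varphi(s - t_0, x)$, which belongs to $C^\infty_{c,\sigma}$ but is supported in a neighbourhood of $\{s = t_0\}$ rather than vanishing for $s < t_0$. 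Because $\psi$ does not have compact support away from $t_0$, it is not directly admissible in the weak formulation for $(\rho,u)$, so I would instead use the formulation tested against functions supported in $(0,\infty)$ together with a temporal cutoff.

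The key technical step is a limiting argument across the level $s = t_0$. Concretely, I would introduce a family of temporal cutoffs $\chi_\delta$ that are $0$ for $s \le t_0$, rise to $1$ on $[t_0 + \delta, \infty)$, and test the momentum equation for $(\rho,u)$ against $\chi_\delta(s)\,\psi(s,x)$, which is a legitimate test function compactly supported in the open interval $(0,\infty)$. Writing out the weak formulation produces the interior terms plus an extra contribution involving $\partial_s \chi_\delta$, namely $\int \int \rho u \cdot \psi\, \chi_\delta'(s)\, \dd x\, \dd s$. As $\delta \to 0$, this concentrates near $s = t_0$, and here I invoke \cref{lem:semi-flow}: since $t_0 \in I$ and $\rho u \in C_w(I; L^2_\sigma)$, the map $s \mapsto \int_{\R^2} \rho(s) u(s) \cdot \psi(t_0, x)\, \dd x$ is continuous at $s = t_0$, so the concentrating average converges to $\int_{\R^2} \rho(t_0) u(t_0) \cdot \varphi(0,x)\, \dd x$, producing exactly the desired initial-data term. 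Undoing the translation $s \mapsto s + t_0$ in the surviving interior integrals then yields the weak formulation for $(\rho_{t_0}, u_{t_0})$ with data $(\rho(t_0), u(t_0))$.

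I would handle the continuity equation in the same manner, using that $\rho \in C_{w^*}([0,\infty); L^\infty)$ from \cref{lem:semi-flow} (so that $\rho(t_0)$ is well-defined pointwise in time and the boundary term converges) together with $\rho u \in L^2_{\loc}$ to justify passing to the limit in the flux term $\int\int \rho u \cdot \nabla \psi\, \chi_\delta$. The divergence-free constraint is preserved trivially under time translation. Finally, I would note that the regularity requirements of \cref{def:lerayhopfsol}(i) for $(\rho_{t_0}, u_{t_0})$ follow immediately by restriction, since the global bounds on $\sqrt{\rho}\,u$, $\rho$, $u$, and $\nabla u$ are inherited on any translated time interval.

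The main obstacle I anticipate is the careful justification of the limit $\delta \to 0$ in the cutoff term: one must confirm that the weak-in-time continuity of $\rho u$ at the single point $t_0$ is enough to evaluate the concentrating integral against the \emph{fixed} spatial profile $\psi(t_0, \cdot)$, while controlling the error from replacing $\psi(s,\cdot)$ by $\psi(t_0,\cdot)$ over the shrinking support — this error is controlled by the smoothness of $\varphi$ and the uniform $L^2$ bound on $\rho u$. No energy inequality is needed for this corollary, since we only claim that $(\rho_{t_0}, u_{t_0})$ is a \emph{distributional} solution; the energy admissibility (and hence the full Leray--Hopf property) is precisely what requires the strong energy inequality and is addressed separately in \cref{subsec:SEI}.
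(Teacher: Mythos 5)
Your proof is correct, and it hinges on the same essential fact as the paper's argument: that $t_0 \in I$ gives weak $L^2$-continuity of $\rho u$ at $t_0$ (\cref{lem:semi-flow}), which is exactly what identifies $\int_{\R^2}\rho(t_0)u(t_0)\cdot\varphi(0)\,\dd x$ as the correct initial-data term. The execution, however, is genuinely different. You re-derive the trace identity at $t_0$ by hand, inserting a temporal cutoff $\chi_\delta$ and letting the term $\iint \rho u\cdot\psi\,\chi_\delta'$ concentrate at $s=t_0$; as you note, this requires checking that continuity of $s\mapsto\int\rho(s)u(s)\cdot\varphi(0)$ along the full-measure set $I$, together with the uniform bound $\rho u\in L^\infty_t L^2_x$ and the $O(\delta)$ error from freezing $\psi$ at time $t_0$, suffices to evaluate the concentrating average — all of which goes through. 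The paper instead avoids any $\delta\to 0$ limit: it extends the test function \emph{backwards} in time as the constant profile $\varphi(t_0)$ on $[0,t_0)$, applies the weak formulation on all of $[0,\infty)$, and then cancels the resulting integrals over $(0,t_0)$ against a ready-made identity for $\int_{\R^2}\rho(t_0)u(t_0)\cdot\varphi(t_0)\,\dd x$ valid at points of $I$ (\cite[Lemma 3.2]{CrinBaratSkondricViolini2024}, which itself encodes precisely the concentration argument you carry out explicitly). Your version is self-contained at the cost of the limiting analysis; the paper's is shorter on the page but outsources that analysis to the companion lemma. Both correctly observe that no energy inequality is needed at this stage.
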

\begin{proof}
Let $t_0 \in I$ with $t_0>0.$ We have to show that, for every $\varphi \in C^\infty_{c,\sigma}([0,\infty)\times \R^2 )$, 
\begin{align*}
   - \int_{\R^2}&  \rho(t_0) u(t_0) \cdot \varphi(t_0) \, \dd x  \\
    &= \int_{t_0}^\infty \int_{\R^2} \rho \, \partial_s \varphi \cdot u +
    \rho u \otimes u : \nabla \varphi \, \dd x \, \mathrm{d} s
    - \nu \int_{t_0}^\infty \int_{\R^2} \nabla u : \nabla \varphi \, \dd x \, \dd s.
\end{align*}
To this end, we let $\varphi \in C^\infty_{c,\sigma}([0,\infty)\times \R^2)$ and define
\begin{align*}
    \widetilde{\varphi}(t)\coloneqq
    \begin{cases}
        \varphi(t_0) , & \text{if } t<t_0,\\
        \varphi(t) , & \text{if } t_0 \leq t.
    \end{cases}
\end{align*}
Then $\widetilde{\varphi}$ is an admissible test function for the weak formulation of the momentum equation for $(\rho,u)$ (see \cite[Lemma 3.3]{CrinBaratSkondricViolini2024}), and we have 
\begin{align*}
    \partial_t \widetilde{\varphi} = \mathds{1}_{[t_0,\infty)} \partial_t \varphi.
\end{align*}
We obtain
\begin{align*}
    - \int_{0}^{\infty} \int_{\R^2} \rho \partial_s \widetilde{\varphi} \cdot u + \rho u \otimes u : \nabla \widetilde{\varphi} \dd x \dd s
    = - \nu \int_{0}^\infty \int_{\R^2} \nabla u : \nabla \widetilde{\varphi} \dd x \dd s
    + \int_{\R^2} \rho_0 u_0 \cdot \widetilde{\varphi}(0) \dd x,
\end{align*}
which is equivalent to
\begin{align}\label{eq:weakformvarphi1}
\begin{aligned}
    \int_{\R^2} \rho_0 u_0 \cdot \varphi(t_0) \dd x =
    - & \int_{t_0}^{\infty} \int_{\R^2} \rho \partial_s \varphi \cdot u + \rho u \otimes u : \nabla \varphi \dd x \dd s + \nu \int_{t_0}^\infty \int_{\R^2} \nabla u : \nabla \widetilde{\varphi} \dd x \dd s\\
    & = - \nu \int_{0}^{t_0} \int_{\R^2} \nabla u : \nabla \varphi(t_0) \dd x \dd s
    + \int_{0}^{t_0} \int_{\R^2}  \rho u \otimes u : \nabla \varphi(t_0) \dd x \dd s.
\end{aligned}
\end{align}
Since $0,t_0 \in I$, we know, from \cite[Lemma 3.2]{CrinBaratSkondricViolini2024}, that
\begin{align}\label{eq:weakformvarphi2}
\begin{aligned}
    \int_{\R^2} \rho(t_0) u(t_0) \cdot \varphi(t_0) \dd x =&
    - \nu \int_{0}^{t_0} \int_{\R^2} \nabla u : \nabla \varphi(t_0) \dd x \dd s\\
    &+ \int_{0}^{t_0} \int_{\R^2}  \rho u \otimes u : \nabla \varphi(t_0) \dd x \dd s
    + \int_{\R^2} \rho_0 u_0 \cdot \varphi(t_0) \dd x.
\end{aligned}
\end{align}
Inserting \cref{eq:weakformvarphi2} into \cref{eq:weakformvarphi1} completes the proof.
\end{proof}

We are now ready to prove that \cref{it:strong} $\implies$ \cref{it:immediately}. A priori, $(\rho_{t_0},u_{t_0})$ needs not be a Leray--Hopf weak solution (indeed, it is unclear whether the strong energy inequality holds in this case). However, if we assume that $(\rho, u)$ satisfies the strong energy inequality, then  $(\rho_{t_0},u_{t_0})$ is a Leray--Hopf weak solution, which, in turn, yields the semi-flow property. This semi-flow property, from a different perspective, can be interpreted as a regularization of $(\rho, u)$ immediately after time $0$ (see the proof of \cref{prop:contitimeSEI}). This regularization is the rationale behind the definition of an immediately strong solution.

\begin{proposition}[Implication \cref{it:strong} $\implies$ \cref{it:immediately}] \label{prop:contitimeSEI}
    Let $(\rho,u)$ be a Leray--Hopf solution of \cref{eq:ns} and assume that $(\rho,u)$ satisfies the strong energy inequality. Then
    \begin{enumerate}
        \item $(\rho,u)$ is an immediately strong solution;
        \item  we can modify $u$ on a negligible set of times such that $u \in C((0,\infty);\,L^2(\R^2));$
        \item after the modification above, for every  $\eps > 0$, the translation 
        \begin{align*}
         \rho_\eps(t,x) \coloneqq \rho(t+\eps,x),\quad u_\eps(t,x)\coloneqq u(t+\eps,x)
        \end{align*}
        is a strong solution of \cref{eq:ns} with initial data $(\rho(\eps,x),u(\eps,x))$.
    \end{enumerate}
\end{proposition}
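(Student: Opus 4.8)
The plan is to establish the three claims in order, leveraging the semi-flow mechanism made available by the strong energy inequality together with the weak-strong uniqueness theorem \cref{thm:weakstrong1} and the regularization result \cref{thm:pzz}. The guiding idea is that the strong energy inequality upgrades each time-translation of $(\rho,u)$ from a mere distributional solution (as given by \cref{cor:distSol}) into a genuine \emph{Leray--Hopf} solution, after which the $H^1$-regularity available at almost every positive time lets us invoke \cref{thm:pzz} to gain strong regularity.

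\textbf{Step 1: the translated solution is Leray--Hopf.} Let $J \subset (0,\infty)$ be the full-measure set from the strong energy inequality (\cref{def:energies}), and let $I$ be the set of weak-continuity points of $\rho u$ from \cref{lem:semi-flow}. Fix any $\eps \in I \cap J$ with $\eps>0$; such times form a full-measure set. By \cref{cor:distSol}, the translation $(\rho_\eps,u_\eps)$ is a distributional solution of \cref{eq:ns} with initial data $(\rho(\eps),u(\eps))$. I claim it is in fact a Leray--Hopf solution: the integrability conditions (i) of \cref{def:lerayhopfsol} transfer immediately from $(\rho,u)$ by translation, and the energy inequality (iii) for $(\rho_\eps,u_\eps)$ starting from time $\eps$ is \emph{exactly} the strong energy inequality of $(\rho,u)$ read between $s=\eps$ and $t=\eps+\tau$ (this uses $\eps \in J$, and one passes to arbitrary $t>\eps$ via the weak lower semicontinuity of $\tau \mapsto \tfrac12\int \rho(\tau)|u(\tau)|^2$ guaranteed by $\rho u \in C_w(I;L^2_\sigma)$, so that the inequality holds for \emph{every} upper endpoint, not merely those in $J$).

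\textbf{Step 2: the translation is strong.} Since $\nabla u \in L^2((0,\infty)\times\R^2)$, we have $u(t) \in H^1(\R^2)$ for a.e.\ $t>0$; combined with $\rho u \in C_w(I;L^2_\sigma)$, we may choose $\eps \in I \cap J$ with $u(\eps) \in H^1(\R^2)$. Then $(\rho_\eps,u_\eps)$ is a Leray--Hopf solution whose initial velocity $u(\eps)$ lies in $H^1(\R^2)$, so \cref{thm:pzz} (implication \cref{it:strongIn} $\implies$ \cref{it:strongC}) applies and yields that $(\rho_\eps,u_\eps)$ is a strong solution satisfying \cref{def:S}. In particular $\partial_t u_\eps, \nabla^2 u_\eps \in L^2((0,\infty);L^2(\R^2))$, which in the original variables reads $\partial_t u, \nabla^2 u \in L^2((\eps,\infty);L^2(\R^2))$. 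Because the set of admissible $\eps$ is dense (indeed full-measure) in $(0,\infty)$, the bound \cref{eq:ImmstrongCond} holds for every $\eps>0$, establishing claim (1), and claim (3) follows with the specific $\eps$ chosen. For the continuity statement (2), each strong solution $(\rho_\eps,u_\eps)$ satisfies $u_\eps \in C([0,\infty);H^1(\R^2)) \subset C([0,\infty);L^2(\R^2))$ by \cref{thm:pzz}; letting $\eps \to 0$ along admissible times and using that these local continuity representatives agree on overlaps (they are all time-translates of the same $u$), we glue them into a representative with $u \in C((0,\infty);L^2(\R^2))$.

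\textbf{Main obstacle.} The delicate point is Step 1, verifying the energy inequality for $(\rho_\eps,u_\eps)$ for \emph{every} $t>\eps$ rather than only for $t \in J$. The strong energy inequality in \cref{def:energies} is stated only for endpoints in the full-measure set $J$, whereas the Leray--Hopf definition demands the inequality at every time. Closing this gap requires the weak continuity $\rho u \in C_w(I;L^2_\sigma)$ to control the kinetic energy $\tfrac12\int\rho(t)|u(t)|^2$ from above by $\liminf$ along $J \ni \tau \to t$, together with the fact that the dissipation term $\nu\int_\eps^t\!\int|\nabla u|^2$ is monotone and continuous in $t$; one must check that the left-hand side evaluated at a general $t$ does not exceed the value obtained by approximating $t$ from within $J$. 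A subtlety worth flagging is the compatibility of the various full-measure sets ($I$, $J$, and $\{t : u(t)\in H^1\}$) so that a single admissible $\eps$ can be selected, and that the chosen time-continuous representatives of $u$ are mutually consistent across different choices of $\eps$.
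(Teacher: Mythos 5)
Your overall strategy is the same as the paper's: use \cref{cor:distSol} plus the strong energy inequality to show that time-translations from suitable times are Leray--Hopf solutions, feed the a.e.\ $H^1$-regularity of $u$ into \cref{thm:pzz} to make these translations strong, and glue the resulting continuous representatives. Two remarks. First, the ``main obstacle'' you flag --- upgrading the energy inequality of $(\rho_\eps,u_\eps)$ from $t$ in a full-measure set to every $t$ --- is not actually needed: as the paper points out in \cref{rk:finrk}, the weak-strong uniqueness input behind \cref{thm:pzz} only requires the energy inequality for almost every $t$, so one can apply \cref{thm:pzz} directly to a translation satisfying the inequality a.e.\ without any lower-semicontinuity argument. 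Your proposed fix via weak lower semicontinuity of the kinetic energy along $\rho u\in C_w(I;L^2_\sigma)$ is plausible but left as a sketch, and it is a detour the paper's own machinery renders unnecessary.

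Second, and this is the genuine gap: claim (3) asserts that $(\rho_\eps,u_\eps)$ is a strong solution \emph{for every} $\eps>0$, whereas you only establish it ``with the specific $\eps$ chosen'', i.e.\ for $\eps$ in the full-measure set $I\cap J\cap\{t: u(t)\in H^1\}$. Density of admissible times is enough for claim (1) (the $L^2((\eps,\infty);L^2)$ bounds are monotone in $\eps$), but not for claim (3) as stated. The missing step is the paper's Step 3: after the continuity modification, one shows that \emph{every} $t>0$ is admissible. Concretely, given an arbitrary $\eps>0$, pick an admissible $t_\eps\in(0,\eps)$; then $(\rho_{t_\eps},u_{t_\eps})$ is strong, so by \cref{lem:energyeq1} the energy \emph{equality} holds on $[t_\eps,\infty)$ (hence the energy inequality from time $\eps$ onwards holds, i.e.\ $\eps$ behaves as a point of $J$), and by \cref{thm:pzz} one has $u\in C([t_\eps,\infty);H^1(\R^2))$, so $u(\eps)\in H^1(\R^2)$. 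Only then can you run your Step 2 at the arbitrary time $\eps$. You have all the ingredients for this upgrade in hand, but as written the proof does not deliver the quantifier in the statement.
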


\begin{proof}
   Let $J$ be the set of time points for which the strong energy inequality holds. Denote by $Z$ the set of points $s$ for which $u(s) \in H^1(\mathbb{R}^2)$, and by $I$ the set of points introduced in \cref{lem:semi-flow}. Let
    $I_w \coloneqq I \cap J \cap Z$.

   \textbf{Step 1.} We claim that, for every $t_0 \in I_w$, the time translation
    \begin{align*}
        \rho_{t_0}(t,x)\coloneqq \rho(t+t_0,x), \quad u_{t_0}(t,x)\coloneqq u(t+t_0,x)
    \end{align*}
    is a strong solution of \cref{eq:ns} in $[0,\infty)$ with initial data $(\rho(t_0),u(t_0))$. First, we prove that $(\rho_{t_0},u_{t_0})$ is a Leray--Hopf weak solution in $[t_0,\infty) \times \R^2.$ By \cref{cor:distSol}, $(\rho_{t_0}, u_{t_0})$ is a distributional solution, and since $t_0 \in J$, the pair $(\rho_{t_0}, u_{t_0})$ satisfies the energy inequality, \ie, for every $t > 0$ such that $t+t_0 \in J$, we have 
    \begin{align*}
        \frac{1}{2} \int_{\R^2} \rho_{t_0}(t) |u_{t_0}(t)|^2 \dd x + \nu \int_0^t \int_{\R^2} |\nabla u_{t_0}(s)|^2 \dd x \dd s \leq \frac{1}{2} \int_{\R^2} \rho(t_0) |u(t_0)|^2 \dd x.
    \end{align*}
    Hence, $(\rho_{t_0},u_{t_0})$ is a Leray--Hopf weak solution in $[t_0,\infty) \times \R^2.$ Since $u(t_0) \in H^1(\R^2)$, we deduce that $(\rho_{t_0},u_{t_0})$ is also a strong solution in $[t_0,\infty) \times \R^2$ by \cref{thm:pzz}.

\textbf{Step 2.} We claim that we can modify $u$ on a negligible set of times such that $u \in C((0,\infty);L^2(\R^2))$ and $(0,\infty) \subset I$. To do so, notice that, as discussed before, for every $\eps > 0$, we can modify $u$ to get $u \in C([\eps,\infty);\, L^2_\sigma(\R^2))$. This follows from the fact that $I_w$ is dense in $[0,\infty)$ and, for every $\eps>0$, we can find a $t_\eps \in (0,\eps)$ and consider a strong solution which is continuous in time and coincides with $(\rho,u)$ almost everywhere in $[t_\eps,\infty) \times \R^2.$
     
To prove that the modification does not depend on the choice of $\eps > 0$, let us consider $0 < \eps_1 < \eps_2$ and the respective modifications $u^1$ and $u^2$. Since $u^i \equiv u$ almost everywhere in $(0,\eps_i) \times \R^2$ and since $u_{\eps_i} \in C([\eps_i,\infty);\, L^2(\R^2))$ for $i \in \{1,2\}$, we conclude that
\begin{align*}
        u_{\eps_1} \equiv u_{\eps_2} \quad \text{in } [\eps_2,\infty) \times \R^2.
\end{align*}
Moreover, since $\rho \in C_{w^*}((0,\infty);L^\infty(\R^2))$, we get $\rho u \in C_w((0,\infty);L_\sigma^2(\R^2))$ and so $(0,\infty) \subset I$.

\textbf{Step 3.} We prove that $(0,\infty) \subset I_w$. Since $\mathscr{L}^1([0,\infty) \setminus I_w) = 0$, for every $\eps > 0$, there exists $t_\eps \in I$ such that $0 < t_\eps < \eps$. Since $(\rho_{t_\eps}, u_{t_\eps})$ is a strong solution, using \cref{lem:energyeq1}, we obtain the energy equality in $[0,\infty)$ for $(\rho_{t_\eps}, u_{t_\eps})$, which implies the energy equality for $(\rho, u)$ in $[t_\eps,\infty)$, hence $\eps \in J$. By \cref{thm:pzz} $(\rho_{t_\eps}, u_{t_\eps}) \in C([0,\infty);H^1(\R^2))$ which leads to $(\rho, u) \in C([t_\eps,\infty);H^1(\R^2))$ and thus $\eps \in Z$.
    
\end{proof}

The proof of the reverse implication, \cref{it:immediately} $\implies$ \cref{it:strong}, follows the line of \cref{lem:energyeq1} and~\cite[Lemma 3.3]{CrinBaratSkondricViolini2024}.

\begin{proposition}[Implication \cref{it:immediately} $\implies$ \cref{it:strong}] \label{prop:energySIn}
    Let $(\rho,u)$ be an immediately strong solution of \cref{eq:ns}. Then $(\rho,u)$ satisfies the strong energy inequality.
\end{proposition}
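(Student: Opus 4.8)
The plan is to prove something stronger than \cref{it:strong}, namely the energy \emph{equality} on every compact subinterval of $(0,\infty)$, from which the strong energy inequality follows at once. The argument mirrors the proof of \cref{lem:energyeq1}, but it must be carried out on intervals $[s,t]\subset(\eps,\infty)$ for arbitrary $\eps>0$, precisely because the immediately strong bounds $\partial_t u,\nabla^2 u \in L^2((\eps,\infty);L^2(\R^2))$ degenerate as $\eps\to0$ and are therefore not available up to $t=0$.

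First I would fix $\eps>0$ and extract the time regularity of $u$ on $(\eps,\infty)$. From $\partial_t u\in L^2((\eps,\infty);L^2(\R^2))$ together with $u\in L^\infty((0,\infty);L^2(\R^2))$ (which follows from $\sqrt\rho\,u\in L^\infty L^2$ and $\rho\ge c_0>0$), one obtains, as in \cite[Lemma 1.31]{RobinsonRodrigoSadowski2016}, the representation
\[
    u(t) = u(s) + \int_s^t \partial_\tau u(\tau)\dd\tau \qquad \text{for all } \eps\le s\le t .
\]
Hence $u$ admits an absolutely continuous representative $[\eps,\infty)\to L^2(\R^2)$; two such representatives for $\eps_1<\eps_2$ agree a.e.\ on $(\eps_2,\infty)$ and, being continuous, coincide there, so letting $\eps\to0$ yields a single representative with $u\in C((0,\infty);L^2(\R^2))$. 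The spatial mollification $u_\gamma$ satisfies the same identity with $(\partial_\tau u)_\gamma$, whence
\[
    u_\gamma\in H^1((\eps,T);H^1(\R^2))\cap L^2((\eps,T);H^2(\R^2)) \qquad \text{for every } T>\eps,
\]
so that $u_\gamma$ is an admissible test function in the weak formulations of the mass and momentum equations on subintervals $[s,t]\subset(\eps,\infty)$ (see \cite[Lemma 3.3]{CrinBaratSkondricViolini2024}).

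Next I would reproduce the computation of \cref{lem:energyeq1} over $[s,t]$ rather than $[0,t]$: testing the momentum equation with $u_\gamma$, and using the weak formulation of the continuity equation to rewrite $\int\rho\,\partial_\tau u_\gamma\cdot u_\gamma$ as the time derivative of $\tfrac12\int\rho\abs{u_\gamma}^2$, produces an energy balance for $u_\gamma$ on $[s,t]$. I would then pass to the limit $\gamma\to0$ using
\[
    u_\gamma\to u \ \text{ in } L^2_\loc((\eps,\infty);L^2)\ \text{and}\ L^4((0,\infty)\times\R^2), \qquad u_\gamma\to u \ \text{ in } C_\loc([\eps,\infty);L^2),
\]
where the $L^4$ convergence rests on $u\in L^4((0,\infty)\times\R^2)$ (Ladyzhenskaya's inequality with $\sqrt\rho\,u\in L^\infty L^2$ and $\nabla u\in L^2 L^2$) together with dominated convergence, and the uniform-in-time convergence follows from the representation above and $(\partial_\tau u)_\gamma\to\partial_\tau u$ in $L^2((\eps,\infty);L^2)$. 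This gives the energy equality
\[
    \frac12\int_{\R^2}\rho(t)\abs{u(t)}^2\dx + \nu\int_s^t\int_{\R^2}\abs{\nabla u(\tau)}^2\dx\dd\tau = \frac12\int_{\R^2}\rho(s)\abs{u(s)}^2\dx
\]
for all $\eps<s<t$. Since $\eps>0$ is arbitrary, the identity holds for all $0<s<t$, and in particular the strong energy inequality of \cref{def:energies} holds with $J=(0,\infty)$.

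The main obstacle is the interchange of mollification with the time integration needed to make $u_\gamma$ a legitimate test function on $(\eps,\infty)$ — this requires the absolute-continuity-in-time representation and the $\eps$-independence of the continuous representative — together with the passage to the limit in the cubic term $\int\rho\,u\otimes u:\nabla u_\gamma$, which is precisely where the $L^4$-in-spacetime control of $u$ (available only away from $t=0$) is used.
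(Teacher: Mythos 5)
Your proposal is correct and follows essentially the same route as the paper: the paper's proof of this proposition is a one-line reduction ("pick $\eps<s$ and argue as in \cref{lem:energyeq1} on $(\eps,\infty)$"), and you have simply spelled out the details of that adaptation — the $\eps$-independent continuous $L^2$-valued representative, the admissibility of $u_\gamma$ as a test function on $[s,t]\subset(\eps,\infty)$, and the passage to the limit $\gamma\to0$ — exactly as intended. The conclusion that the energy equality on every $[s,t]\subset(0,\infty)$ yields the strong energy inequality with $J=(0,\infty)$ is also consistent with \cref{def:energies}.
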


\begin{proof}
  Let us consider $s>0$ and pick $\eps$ such that $0<\eps<s$. By definition, we have
    \begin{align*}
        \pt u,\, \nabla^2 u \in L^2((\eps,\infty);L^2(\R^2)),
    \end{align*}
    and, similarly to \cref{lem:energyeq1}, we get that $(\rho,u)$ satisfies the energy equality in $(\eps,\infty)$. 

\end{proof}
\begin{remark}\label{rk:AdmCont}
    We have seen that if $(\rho,u)$ is an immediately strong solution then $u \in C((0,\infty);L^2(\R^2))$ and this continuity plays a crucial role in the energy estimates. Indeed, following the proof of~\cite[Lemma 3.3]{CrinBaratSkondricViolini2024}, it guarantees us that the solution is an admissible test function in the weak formulation of both transport and momentum equations in \cref{eq:ns} for positive times. Unfortunately, the continuity at the initial time for the velocity is unknown for immediately strong solutions and this prevents us from proving the energy equality. This continuity at time $t=0$ is also a crucial assumption to prove uniqueness in the class of immediately strong solutions as required in \cref{thm:weakstrong2D}.
\end{remark}

\subsection{Time-decay estimates}\label{subse:Timedec}

In this subsection, we show that \cref{it:A1A2A3} is also equivalent to \cref{it:immediately} and \cref{it:strong}. First, we note that the implication \cref{it:A1A2A3} $\implies$ \cref{it:strong} is directly encoded in the definition of $A_1$. The reverse implication is proven in the following proposition.

\begin{proposition}[Implication \cref{it:strong} $\implies$ \cref{it:A1A2A3}]
Let $(\rho,u)$ be a Leray--Hopf solution of \cref{eq:ns} tsatisfying the strong energy inequality (in the sense of \cref{def:energies}). There exists a constant 
    $C=C(\nu,\norm{u_0}_{L^2},c_0,C_0)$ such that
    \begin{align*}
        A_1^0(u), \, A_2^0(u), \, A_3^0(u) \leq C,
    \end{align*}
    where $A^0_i$, for $i \in \{1,2,3\}$, is defined in \cref{def:IS}.
\end{proposition}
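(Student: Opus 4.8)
The plan is to combine the semi-flow structure furnished by \cref{prop:contitimeSEI} with the time-weighted a priori estimates of \cite[Theorem 1.1]{Danchin2024}. The essential point is that the temporal weights $s,s^2,s^3$ appearing in \cref{def:IS} are precisely what makes the bounds depend only on $\norm{u_0}_{L^2}$ (together with $\nu,c_0,C_0$), rather than on higher-order norms of the initial velocity; this is what will let me run a translation argument with an $\eps$-uniform constant and then send $\eps\downarrow0$.

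First I would invoke \cref{prop:contitimeSEI}: since $(\rho,u)$ satisfies the strong energy inequality, it is immediately strong, $u\in C((0,\infty);L^2(\R^2))$, and for every $\eps>0$ the translate $(\rho_\eps,u_\eps)$, with $\rho_\eps(t)=\rho(t+\eps)$ and $u_\eps(t)=u(t+\eps)$, is a strong solution with initial data $(\rho(\eps),u(\eps))$, where $u(\eps)\in H^1(\R^2)$. The density bounds are preserved, $c_0\le \rho(\eps)\le C_0$, and the energy inequality yields the $\eps$-uniform bound $\norm{u(\eps)}_{L^2}^2\le \tfrac{C_0}{c_0}\norm{u_0}_{L^2}^2=:M$, since $\tfrac{c_0}{2}\norm{u(\eps)}_{L^2}^2\le\tfrac12\int_{\R^2}\rho(\eps)\abs{u(\eps)}^2\le\tfrac12\int_{\R^2}\rho_0\abs{u_0}^2\le\tfrac{C_0}{2}\norm{u_0}_{L^2}^2$.

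Next I would apply \cite[Theorem 1.1]{Danchin2024} to the datum $(\rho(\eps),u(\eps))$, which satisfies \cref{ass:data} with the same constants $c_0,C_0$. This produces a Leray--Hopf solution whose quantities $A_1^0,A_2^0,A_3^0$ are controlled by a constant depending only on $\nu,\norm{u(\eps)}_{L^2}\le\sqrt M,c_0,C_0$, hence by a constant $C_\ast=C(\nu,\norm{u_0}_{L^2},c_0,C_0)$ independent of $\eps$. Because $u(\eps)\in H^1(\R^2)$, \cref{thm:pzz} (through \cref{thm:weakstrong1}) guarantees that the Leray--Hopf solution issued from $(\rho(\eps),u(\eps))$ is unique, so this solution coincides with the strong solution $(\rho_\eps,u_\eps)$. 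Thus
\[
A_1^0(u_\eps),\,A_2^0(u_\eps),\,A_3^0(u_\eps)\le C_\ast \qquad\text{for every }\eps>0 .
\]
Finally, I would undo the translation and let $\eps\downarrow0$. Via the change of variables $\sigma=s+\eps$ the bounds for $u_\eps$ become, for $k\in\{1,2,3\}$, controls of the form $\int_\eps^\infty (\sigma-\eps)^k\int_{\R^2}(\cdots)\dd x\,\dd\sigma\le C_\ast$ for the integral contributions and $\esssup_{\sigma>\eps}(\sigma-\eps)^k\int_{\R^2}(\cdots)\dd x\le C_\ast$ for the supremum contributions. Choosing a sequence $\eps_n\downarrow0$, the weights $(\sigma-\eps_n)^k\mathds{1}_{\{\sigma>\eps_n\}}$ increase pointwise to $\sigma^k$, so monotone convergence gives $\int_0^\infty \sigma^k\int_{\R^2}(\cdots)\dd x\,\dd\sigma\le C_\ast$; for the essential suprema, off a single null set one has $(\sigma-\eps_n)^k\int_{\R^2}(\cdots)\le C_\ast$ for all large $n$, and $n\to\infty$ yields $\sigma^k\int_{\R^2}(\cdots)\le C_\ast$ for a.e.\ $\sigma$. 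This proves $A_1^0(u),A_2^0(u),A_3^0(u)\le C$ with $C=C(\nu,\norm{u_0}_{L^2},c_0,C_0)$.

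The crux of the argument is the scale-critical dependence of Danchin's constant: I must use that \cite[Theorem 1.1]{Danchin2024} controls $A_1^0,A_2^0,A_3^0$ purely through the $L^2$-norm of the velocity datum, so that the uniform energy bound on $u(\eps)$ makes $C_\ast$ independent of $\eps$; any residual dependence on $\norm{\nabla u(\eps)}_{L^2}$, which is expected to blow up as $\eps\downarrow0$, would be fatal. A secondary (but genuinely elementary) point is the $\eps\downarrow0$ passage, where the monotonicity of the shifted weights trivializes both the monotone-convergence step for the integrals and the a.e.\ argument for the suprema, provided one works along a fixed sequence $\eps_n$ to avoid uncountable unions of null sets. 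As an alternative to the translation scheme, one could instead apply \cite[Theorem 1.1]{Danchin2024} directly at $t=0$ and identify its solution with $(\rho,u)$ by means of the weak--strong uniqueness in \cref{thm:weakstrong2D}, whose regularity hypothesis \cref{ass:data-thu} is imposed only on the Danchin solution and is therefore available for free; this incurs no circularity, since that uniqueness uses \cref{it:A1A2A3} of the \emph{strong} solution, which here is supplied directly by Danchin's construction rather than by the present proposition.
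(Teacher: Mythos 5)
Your argument is essentially identical to the paper's proof: translate in time via \cref{prop:contitimeSEI}, apply the time-weighted bounds of \cite[Theorem 1.1]{Danchin2024} to the $H^1$ datum $(\rho(\eps),u(\eps))$ with a constant made $\eps$-uniform through the energy inequality and the no-vacuum bounds, identify the Danchin solution with the translate by \cref{thm:weakstrong1}, and let $\eps\downarrow 0$. Your handling of the limit (monotone convergence with the correctly shifted weights $(\sigma-\eps)^k$) is in fact slightly more careful than the paper's; only your closing ``alternative'' via \cref{thm:weakstrong2D} is doubtful, since the hypothesis $\nabla u_2\in L^1_\loc([0,\infty);L^\infty(\R^2))$ in \cref{ass:data-thu} is not known to hold for Danchin's solution with merely $L^2$ initial velocity.
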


\begin{proof}
Thanks to the results in \cref{subsec:SEI}, we already know that \cref{it:strong} $\iff$ \cref{it:immediately}. By \cref{prop:contitimeSEI}, for every $\varepsilon > 0$, the time-shifted functions
\[
  u_\varepsilon(t,x) \coloneqq u(t+\varepsilon,x) \quad \text{and} \quad \rho_\varepsilon(t,x) \coloneqq \rho(t+\varepsilon,x)
\]
are strong solutions with initial data $(\rho_\varepsilon(0), u_\varepsilon(0))$. Furthermore, by \cref{thm:pzz}, we have $u_\varepsilon(0) \in H^1(\mathbb{R}^2)$. Following the proof of~\cite[Theorem 1.1]{Danchin2024}, we obtain the existence of a Leray--Hopf weak solution $(\widetilde{\rho}_\varepsilon, \widetilde{u}_\varepsilon)$ with initial data $(\rho_\varepsilon(0), u_\varepsilon(0))$ such that
\begin{align}\label{eq:Init}
    A_i^0(\widetilde{u}_\varepsilon) \leq C \quad \text{for } i \in \{1, 2, 3\},
\end{align}
where $C$ depends only on the constants $c_0$ and $C^0$ defined in \cref{ass:data} and $\norm{u_\varepsilon(0)}_{L^2}$%
\footnote{~In the case of smooth triplet $(\rho,u,P)$ the bounds for $A_1^0$, $A_2^0$, and $A_3^0$ are proved in~\cite[(2.11), (2.21), (2.26)]{Danchin2024}. In~\cite[Section 2.2]{Danchin2024}, it is shown that the bound depends only on $c_0$, $C^0$, and the $L^2$ norm of the initial data. The non-smooth case is also treated in~\cite[Section 2.2]{Danchin2024} using an approximation argument.}.
Moreover, $C$ is an increasing function of $\norm{u_\varepsilon(0)}_{L^2}$. By weak-strong uniqueness (see \cref{thm:weakstrong1}), we conclude that $(\widetilde{\rho}_\varepsilon, \widetilde{u}_\varepsilon) \equiv (\rho_\varepsilon, u_\varepsilon)$, and thus \cref{eq:Init} holds with $\widetilde{u}_\varepsilon$ replaced by $u_\varepsilon$. 

To conclude the proof, we shift back in time. We focus only on the term $A_1^0$, as the other two can be handled similarly, and compute 
\begin{align*}
    A^0_1(u_\varepsilon) &= \esssup_{s \in [0, T]} s \int_{\mathbb{R}^2} |\nabla u_\varepsilon (s)|^2 \, \mathrm{d}x + \int_0^T s \int_{\mathbb{R}^2} \left( |\partial_s u_\varepsilon|^2 + |\nabla^2 u_\varepsilon|^2 + |\nabla P_\varepsilon|^2 \right) \, \mathrm{d}x \, \mathrm{d}s, \\ 
    &= \esssup_{s \in [\varepsilon, T+\varepsilon]} s\int_{\mathbb{R}^2} |\nabla u (s)|^2 \, \mathrm{d}x + \int_\varepsilon^{T+\varepsilon} s \int_{\mathbb{R}^2} \left( |\partial_s u|^2 + |\nabla^2 u|^2 + |\nabla P|^2 \right) \, \mathrm{d}x \, \mathrm{d}s.
\end{align*}
Since $\norm{u_\varepsilon(0)}_{L^2} \leq C \norm{u(0)}_{L^2}$ by the energy inequality and the no-vacuum assumption, the right-hand side of \cref{eq:Init} does not depend on the time shift $\varepsilon$. Thus, we can take the limit as $\varepsilon \to 0$ in $A^0_1(u_\varepsilon)$ and the bound is preserved. Since $A^0_1(u_\varepsilon) \to A^0_1(u)$ as $\varepsilon\to0$, the proof is complete by the freedom of $T>0$.

\end{proof}

\section{Role of the pressure and proof of \texorpdfstring{\cref{it:immediately}, \cref{it:strong}, \cref{it:A1A2A3}$\iff$\cref{it:pressure}}{(i), (ii), (iii) if and only if (iv)}}
\label{sec:pressure}
\subsection{Associated pressure and suitable solutions}

We begin by specifying that when referring to the pressure \emph{associated} with a Leray--Hopf solution $(\rho, u)$ of \cref{eq:ns}, we mean the existence of a pressure $P$ such that the momentum equation is satisfied in the sense of distributions.

\begin{definition}[Associated pressure]\label{def:assPress}
    Let \( (\rho,u) \) be a Leray--Hopf solution. We say that \( P \in 
    L^1_\loc((0,\infty) \times \R^2) \) is a \emph{pressure associated with \( (\rho,u) \)} if, for every \( \varphi \in C_{c}^\infty((0,\infty) \times \R^2;\R^2) \),
    \begin{align*}
        - \int_0^\infty \int_{\R^2} &\rho \partial_t \varphi \cdot u +
        \rho u \otimes u : \nabla \varphi \dd x \dd s  
         + \nu \int_0^\infty \int_{\R^2} \nabla u : \nabla \varphi \dd x \dd s = 
        \int_0^\infty \int_{\R^2} P \dive \varphi \dd x \dd s.
    \end{align*}
\end{definition}

We also recall the notion of \emph{suitable solution}. The suitability condition can be interpreted as local energy conservation, which is obtained by taking the scalar product of \cref{eq:ns} with \( u \).

\begin{definition}[Suitable solutions]\label{def:suitable}
We say that \( (\rho,u,P) \) is \emph{suitable} if \( (\rho,u) \) is a {Leray--Hopf weak solution} and \( P \) is an associated {pressure} such that \( P u \in L^1_\loc((0,\infty) \times \R^2) \) and the following equality holds in the sense of distributions:
\begin{align*}
    \partial_t \left(\frac{\rho \abs{u}^2}{2}\right) - \nu \Delta\left( \frac{\abs{u}^2}{2}\right) + \nu \abs{\nabla u}^2  + &  \div \left(\frac{1}{2} \rho \abs{u}^2 u + P u \right) = 0,
\end{align*}
meaning that for every \( \varphi \in C^\infty_c((0,\infty) \times \R^2) \), we have 
\begin{align*}
    \begin{aligned}
        - \int_0^\infty\int_{\R^2} \frac{1}{2} \rho |u|^2 \partial_t \varphi  - \nu & 
        \int_0^\infty\int_{\R^2} \frac{1}{2}|u|^2 \Delta \varphi + \nu 
        \int_0^\infty\int_{\R^2} |\nabla u|^2 \varphi  - \int_0^\infty\int_{\R^2} \left(\frac{1}{2} \rho |u|^2  u + P  u \right) \cdot \nabla \varphi = 0.
    \end{aligned}
\end{align*}
\end{definition}

The definition of suitability can be extended to a sequence of test functions, provided we restrict to a time interval of full measure.
\begin{lemma}\label{lem:suit}
    Suppose \( (\rho,u,P) \) is suitable and consider a sequence \( \{\varphi_n\}_{n \in\mathbb N} \subset C^\infty_c(\R^2) \). There exists a set \( I \subset (0,\infty) \), with \( \mathscr{L}^1((0,\infty) \setminus I) = 0 \), such that, for every \( n \in \N \) and every \( t,s \in I \) with \( s<t \),
    \begin{align}\label{eq:timeindependent}
    \begin{aligned}
        \int_{\R^2} \frac{1}{2} \rho(t) |u(t)|^2 \varphi_n \dd x & - \int_{\R^2} \frac{1}{2} \rho(s) |u(s)|^2 \varphi_n \dd x =   \nu 
        \int_s^t \int_{\R^2} \frac{1}{2}|u|^2 \Delta \varphi_n \dd x \dd \tau  \\
       & - \nu
        \int_s^t \int_{\R^2} |\nabla u|^2 \varphi_n \dd x \dd \tau + \int_s^t \int_{\R^2} \left(\frac{1}{2} \rho |u|^2  u + P u \right) \cdot \nabla \varphi_n \dd x 
        \dd \tau. 
    \end{aligned}
    \end{align}
\end{lemma}
\begin{proof}
    For every \( n \in \N \), let \( I_n \) be the set of Lebesgue points of \( t \mapsto \frac{1}{2} \int_{\R^2} \rho(t) |u(t)|^2 \varphi_n \dd x \), and set $I \coloneqq \bigcap_{n \in \N} I_n$. As in the proof of~\cite[Lemma 3.2]{CrinBaratSkondricViolini2024}, we can use Lebesgue's  differentiation theorem to show that \cref{eq:timeindependent} is true for every \( t,s \in I \) and every 
    \( n \in \N \).
\end{proof}

\subsection{Pressure regularity and properties of the energy}
\label{ssec:pr1}

In this subsection, we show that the regularity of the pressure \cref{it:pressure} implies the strong energy inequality \cref{it:strong}. We divide this proof into two steps. First, in \cref{lem:suitable}, we show that, if there exists an associated pressure \( P \in L^2_\loc((0, \infty) \times \R^2) \), then the triplet \( (\rho,u,P) \) is a suitable solution (in the sense of \cref{def:suitable}), meaning that it conserves the energy locally. For the proof of this fact, we follow the strategy used in~\cite{BieWang2023}. Secondly, in \cref{lem:StrongIneq}, we prove that, under the additional assumption \( P \in L^2_\loc((0, \infty); \bmo(\R^2)) \), the solution \( (\rho,u) \) satisfies the strong energy inequality.

\begin{lemma}\label{lem:suitable}
    Let $(\rho, u)$ be a Leray--Hopf solution and assume that there exists an associated pressure $P$ in the class $ L_\loc^2((0,\infty)\times \R^2)$. Then the triplet $(\rho, u, P)$ is suitable. 
\end{lemma}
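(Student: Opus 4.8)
The goal is to show that a Leray–Hopf solution $(\rho,u)$ with an associated pressure $P \in L^2_\loc((0,\infty)\times\R^2)$ is suitable, i.e.\ satisfies the local energy equality of \cref{def:suitable}. The natural strategy — following \cite{BieWang2023} — is to mollify the momentum equation in space, test the mollified equation against $\varphi\, u_\gamma$ (the mollified velocity weighted by a test function $\varphi \in C^\infty_c((0,\infty)\times\R^2)$), integrate over space and time, and then send the mollification parameter $\gamma \to 0$. Because $u \in C((0,\infty);L^2)$ with $\nabla u \in L^2$ and $P \in L^2_\loc$, the product $\varphi\, u_\gamma$ is a legitimate test function for the distributional momentum equation at the mollified level. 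The point of multiplying by $u$ is precisely to reconstruct the local energy balance: the term $\rho\,\partial_t u \cdot u$ yields $\partial_t(\tfrac12\rho|u|^2)$ (using the mass equation to handle the $\partial_t\rho$ contribution), the viscous term $-\nu\Delta u\cdot u$ produces the $-\nu\Delta(\tfrac12|u|^2) + \nu|\nabla u|^2$ combination, the convective term produces $\div(\tfrac12\rho|u|^2 u)$, and the pressure term produces $\div(Pu)$.

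\textbf{Key steps, in order.} First I would write the spatially mollified momentum equation, which holds pointwise a.e.\ in $(0,\infty)\times\R^2$ since every term is at least $L^1_\loc$ in time with values in a suitable space:
\begin{align*}
    \partial_t (\rho u)_\gamma + \div(\rho u \otimes u)_\gamma - \nu \Delta u_\gamma + \nabla P_\gamma = 0.
\end{align*}
Second, I would take the scalar product with $\varphi\, u_\gamma$ and integrate. The crux of the argument is then to pass to the limit $\gamma \to 0$ in each resulting term. The \emph{linear} terms (viscous and pressure) converge immediately by strong $L^2_\loc$ convergence of $u_\gamma \to u$ and $\nabla u_\gamma \to \nabla u$ in $L^2$, together with $P_\gamma \to P$ in $L^2_\loc$. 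Third — and this is where the commutator enters — I would handle the \emph{nonlinear} transport and time-derivative terms. Here one cannot simply replace $(\rho u \otimes u)_\gamma$ by $\rho u \otimes u$ inside the product with $u_\gamma$; the difference is controlled by a \emph{commutator} of the form $[\,\cdot\,, \eta_\gamma *]$, and the appropriate tool is the DiPerna–Lions / Friedrichs commutator lemma (the technical results collected in the appendix referenced as \cref{app:conv-comm}). The commutator converges to zero in $L^1_\loc$ provided $\rho \in L^\infty$ and $u, \nabla u$ have the stated integrability, which is exactly the Leray–Hopf regularity available here.

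\textbf{Main obstacle.} The delicate point — and the heart of the Bie–Wang method — is the treatment of the time-derivative term $\int \partial_t(\rho u)_\gamma \cdot (\varphi\, u_\gamma)$. One wants to rewrite $\rho\,\partial_t u \cdot u = \tfrac12 \rho\,\partial_t |u|^2$ and then use the mass equation $\partial_t \rho + \div(\rho u)=0$ to convert $\tfrac12 \int \partial_t \rho\,|u|^2 \varphi$ into the convective contribution, so that the two combine into $-\int \tfrac12 \rho|u|^2 \partial_t\varphi - \int \tfrac12\rho|u|^2 u\cdot\nabla\varphi$. Performing this manipulation rigorously at the mollified level produces commutator terms that must be shown to vanish as $\gamma\to0$; controlling these requires the a priori bound $u \in L^4_\loc$ (from Ladyzhenskaya's inequality applied to $u \in L^\infty_\loc L^2 \cap L^2\dot H^1$, as used in \cref{rk:matDer}) so that $\rho|u|^2 u \in L^1_\loc$ and the convective commutator is integrable. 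Once all commutators are shown to vanish, collecting the surviving terms yields exactly the distributional local energy identity of \cref{def:suitable}, and the integrability $Pu \in L^1_\loc$ follows from $P \in L^2_\loc$ and $u \in L^2_\loc$ by Cauchy–Schwarz. I expect the commutator estimates for the convective and time-derivative terms to be the genuinely technical part; everything else is a careful but routine limiting argument.
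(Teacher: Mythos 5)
There is a genuine gap, and it sits exactly at the point you flag as ``the delicate point'': your choice of test function. You propose to test the mollified momentum equation against $\varphi\, u_\gamma$ and then rewrite the time-derivative term via $\rho\,\partial_t u\cdot u=\tfrac12\rho\,\partial_t|u|^2$. But for a Leray--Hopf solution of the \emph{inhomogeneous} system, $\partial_t u$ is not a well-defined distribution with any usable integrability: the equations only control $\partial_t(\rho u)$ and $\partial_t\rho$, and recovering $\partial_t u$ would require dividing by $\rho$ and multiplying $u$ against $\partial_t\rho$, a product of an $L^2$ function with a distribution that is merely a spatial divergence of an $L^2_{\mathrm{loc}}$ field. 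Consequently $\partial_t u_\gamma=(\partial_t u)_\gamma$ is not controlled either, and the integration by parts in time needed to turn $\int\partial_t(\rho u)_\gamma\cdot(\varphi\,u_\gamma)$ into $\partial_t(\tfrac12\rho|u|^2)$ plus commutators cannot be performed. This is precisely the obstruction the paper points out in the introduction when it says it is unclear whether $u$, or any spatial mollification of $u$, can be used as a test function in the weak formulation.

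The paper's proof avoids this by testing against $\frac{(\rho u)_\gamma}{\rho_\gamma}\,\varphi$ instead of $u_\gamma\varphi$. With this choice the time-derivative term becomes $\partial_t(\rho u)_\gamma\cdot\frac{(\rho u)_\gamma}{\rho_\gamma}\varphi=\partial_t\bigl(\tfrac12|(\rho u)_\gamma|^2\bigr)\frac{\varphi}{\rho_\gamma}$, a perfect time derivative divided by $\rho_\gamma$; integrating by parts in time produces a $\partial_t\rho_\gamma$ term that is then eliminated using the mollified mass equation $\partial_t\rho_\gamma=-\div(\rho u)_\gamma$. Everything needed about the time variable is thus supplied by the two equations themselves, never by time regularity of $u$. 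The price is that one must show $\frac{(\rho u)_\gamma}{\rho_\gamma}\to u$ together with its gradient in the appropriate $L^2_{\mathrm{loc}}$ and $L^4_{\mathrm{loc}}$ topologies, which is where the DiPerna--Lions commutator $(\rho u)_\gamma-\rho_\gamma u$ and the lower bound $\rho\geq c_0>0$ enter (Step 5 of the paper's proof and \cref{lem:commutator1}, \cref{lem:deriv-conv}). Your overall architecture (mollify, test, pass to the limit via commutators, use $u\in L^4_{\mathrm{loc}}$ from Ladyzhenskaya) is the right one, but without replacing $u_\gamma$ by the density-weighted regularization $\frac{(\rho u)_\gamma}{\rho_\gamma}$ the argument does not close.
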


\begin{proof}
    We divide the proof into five steps.

    \textbf{Step 1.} \emph{Regularization via spatial mollification.} Convolving in space the terms in \cref{eq:ns}, we deduce that the following identities hold almost everywhere:
    \begin{align*}
        \partial_t \rho_\gamma + \dive((\rho u)_\gamma) = 0 \quad \text{and}\quad 
        \partial_t (\rho u)_\gamma + \dive( (\rho u \otimes u)_\gamma) + \nabla P_\gamma = \nu \Delta u_\gamma.
    \end{align*}
    Let \(\varphi \in C^\infty_c((0,+\infty) \times \R^2)\) and fix compact sets \(I \subset (0,\infty)\) and \(K \subset \R^2\) such that \(\supp  \varphi  \subset I \times K\). Multiplying the regularized momentum equation by 
    \(
    \frac{(\rho u)_\gamma}{\rho_\gamma}  \varphi,
    \)
    and integrating over space-time, we obtain
    \begin{align}\label{eq:ccss}
    \begin{aligned}
        &\iint_{I \times K} \partial_s (\rho u)_\gamma \cdot \frac{(\rho u)_\gamma}{\rho_\gamma} \varphi \, \mathrm d x \, \mathrm d s   \\ &= \underbrace{- \iint_{I \times K}   \left( \dive((\rho u \otimes u)_\gamma) \cdot \frac{(\rho u)_\gamma}{\rho_\gamma} \varphi -   \nabla P_\gamma \cdot \frac{(\rho u)_\gamma}{\rho_\gamma} \varphi   + \nu  \Delta u_\gamma \cdot \frac{(\rho u)_\gamma}{\rho_\gamma} \varphi \right) \, \mathrm d x \, \mathrm d s 
 }_{\eqqcolon R_\gamma}.
    \end{aligned}
    \end{align}
    Our aim is to let $\gamma \to 0$ in \cref{eq:ccss}. 

    \textbf{Step 2.} \emph{The right-hand side.} Using integration by parts on the right-hand side of \cref{eq:ccss}, we get
    \begin{align*}
        R_\gamma &= \iint_{I \times K} (\rho u \otimes u)_\gamma : \nabla \left( \frac{(\rho u)_\gamma}{\rho_\gamma} \varphi \right)  \, \mathrm d x \, \mathrm d s + \iint_{I \times K}  P_\gamma \div  \left( \frac{(\rho u)_\gamma}{\rho_\gamma} \varphi \right)  \, \mathrm d x \, \mathrm d s 
        \\ & \quad - \nu \iint_{I \times K} \nabla u_\gamma : \nabla  \left( \frac{(\rho u)_\gamma}{\rho_\gamma} \varphi \right) \, \mathrm d x \, \mathrm d s.
    \end{align*}
We claim that, as $\gamma\to0$,
    \begin{align}\label{eq:claim}
        \norm{\nabla \left( \frac{(\rho u)_\gamma}{\rho_\gamma} \right) - \nabla u \ }_{L^2(I \times K)} \longrightarrow 0.
    \end{align}
We will prove \eqref{eq:claim} in Step 5. It is straightforward to check that, using \cref{eq:claim}, we have that
    \begin{align}\label{eq:hardconv}
        \norm{\nabla \left( \frac{(\rho u)_\gamma}{\rho_\gamma} \varphi \right) - \nabla( u \varphi)}_{L^2} \longrightarrow 0 \quad \text{and} \quad  \norm{\div \left( \frac{(\rho u)_\gamma}{\rho_\gamma} \varphi \right) - \div ( u \varphi)}_{L^2} \longrightarrow 0.
    \end{align}
    The pressure belongs to \(L^2_\loc ((0, \infty) \times \R^2)\) by assumption and \(\nabla u \in L^2_\loc ((0, \infty) \times \R^2)\) by the energy inequality. Then \(u \in L_\loc^4((0,\infty) \times \R^2)\) due to  Ladyzhenskaya's inequality, and thus
    \begin{align*}
       \norm{(\rho u \otimes u)}_{L^2}  
        &\leq \norm{\rho}_{L^\infty} \norm{u}^2_{L^4} \leq C.
    \end{align*}
    By the properties of the mollification, we have that 
    \begin{align}\label{eq:easycon}
       (\rho u \otimes u)_\gamma \longrightarrow (\rho u \otimes u), \quad P_\gamma \longrightarrow P, \quad \text{and} \quad \nabla u_\gamma \longrightarrow \nabla u \qquad \text{in \(L^2(I \times K)\)}.
    \end{align}
    Combining \cref{eq:hardconv} and \cref{eq:easycon}, we get that
    \begin{equation}\label{eq:Rths}
      \begin{aligned}
        \lim_{\gamma \to 0}R_\gamma & = \iint_{I \times K} \left( \rho u \otimes u : \nabla \left( u \varphi \right) +   P \div  \left( u \varphi \right)  -\nu  \nabla u : \nabla  \left( u \varphi \right) \right) \, \mathrm d x \, \mathrm d s  \\  
        & =  \iint_{I \times K} \left(  \varphi \rho u \otimes u : \nabla u +  \rho \abs{u}^2 u \cdot \nabla \varphi +   P u \cdot \nabla \varphi  - \nu  \abs{\nabla u}^2 \varphi + \nu  \frac{\abs{u}^2}{2} \Delta \varphi \right) \, \mathrm d x \, \mathrm d s,
      \end{aligned}
    \end{equation}
    where we used the fact that
    \begin{align*}
        - \iint_{I \times K} \nabla u : \nabla  \varphi \otimes  u \, \mathrm d x \, \mathrm d s =  \iint_{I \times K} \frac{\abs{u}^2}{2} \Delta \varphi \, \mathrm d x \, \mathrm d s.
    \end{align*}

    \textbf{Step 3.} \emph{Time derivative.} Integrating by parts in time, we get 
    \begin{align*} 
    \iint_{I \times K} \partial_s \left(\frac{1}{2}\left|(\rho u)_{\gamma}\right|^2\right) \frac{\varphi}{\rho_{\gamma}} \, \mathrm d x \, \mathrm d s= \underbrace{- \frac{1}{2}\iint_{I \times K} \left|(\rho u)_{\gamma}\right|^2 \partial_t \varphi \frac{1}{\rho_{\gamma}} \, \mathrm d x \, \mathrm d s}_{\eqqcolon T_{1,\gamma}} +\underbrace{\frac{1}{2}\iint_{I \times K} \left|(\rho u)_{\gamma}\right|^2 \varphi \frac{\partial_s \rho_{\gamma}}{\rho_{\gamma}^2} \, \mathrm d x \, \mathrm d s}_{\eqqcolon T_{2,\gamma}}.
    \end{align*}
    The convergence of \(T_{1,\gamma}\) is clear: as $\gamma \to0$, we have
    \begin{align}\label{eq:T1ths}
        T_{1,\gamma} \longrightarrow -\frac{1}{2} \iint_{I \times K} \rho |u|^2 \partial_s \varphi \, \mathrm d x \, \mathrm d s.
    \end{align}
    For \(T_{2,\gamma}\), we first use the transport equation \(\partial_t \rho_\gamma = - \div (\rho u)_\gamma\)  and integration by parts to write
    \begin{equation}\label{eq:T2ths}
      \begin{aligned}
        \lim_{\gamma \to 0} T_{2,\gamma} & = -\frac{1}{2} \lim_{\gamma \to 0} \left(\iint_{I \times K} \left|(\rho u)_{\gamma}\right|^2 \varphi \frac{\div ((\rho u)_\gamma)}{\rho_\gamma^2}  \, \mathrm d x \, \mathrm d s\right) \\ 
        & = \frac{1}{2} \lim_{\gamma \to 0} \left(\iint_{I \times K} \nabla \left( \left|(\rho u)_{\gamma}\right|^2 \frac{\varphi}{\rho_{\gamma}^2}  \right) \cdot (\rho u)_\gamma \, \mathrm d x \, \mathrm d s \right) \\ 
        & = \frac{1}{2}  \lim_{\gamma \to 0} \left( \iint_{I \times K} \frac{\left|(\rho u)_{\gamma}\right|^2 }{\rho_{\gamma}}  \nabla \varphi \cdot   u \, \mathrm d x \, \mathrm d s \right) + \frac{1}{2}\lim_{\gamma \to 0} \left( \iint_{I \times K} \varphi   \nabla \left( \frac{\left|(\rho u)_{\gamma}\right|^2}{\rho_\gamma^2} \right) \cdot (\rho u)_\gamma \, \mathrm d x \, \mathrm d s \right)  \\ 
        & = \frac{1}{2}  \iint_{I \times K} \rho \abs{u}^2 \nabla \varphi \cdot  u  \, \mathrm d x \, \mathrm d s + \lim_{\gamma \to 0} \left( \iint_{I \times K} \varphi  (\rho u)_\gamma \otimes \frac{(\rho u)_\gamma}{\rho_\gamma} :  \nabla \left( \frac{(\rho u)_{\gamma}}{\rho_\gamma} \right) \, \mathrm d x \, \mathrm d s \right) \\ 
        & = \frac{1}{2}  \iint_{I \times K} \rho \abs{u}^2 \nabla \varphi \cdot  u \, \mathrm d x \, \mathrm d s +  \iint_{I \times K} \varphi  \rho u \otimes u  :  \nabla  u \, \mathrm d x \, \mathrm d s.
       \end{aligned}
    \end{equation}
    To prove the last limit in \cref{eq:T2ths}, we used the fact that
    \begin{align*}
        \norm{(\rho u)_\gamma -\rho u}_{L^4} \longrightarrow 0, \qquad \norm{\frac{(\rho u)_\gamma}{\rho_\gamma} - u}_{L^4} \longrightarrow 0 \quad \text{and} \quad \norm{\nabla \left( \frac{(\rho u)_{\gamma}}{\rho_\gamma}\right) - \nabla u}_{L^2} \longrightarrow 0,
    \end{align*}
    where the first one follows from $\rho u \in L^4(I \times K)$, the last one by \cref{eq:claim}, and, for the second one, we use again Ladyzhenskaya's inequality to obtain
    \begin{align*}
        \norm{\frac{(\rho u)_\gamma}{\rho_\gamma} - u}_{L^4}& \leq \norm{\frac{(\rho u)_\gamma}{\rho_\gamma} - u}^{1\slash 2}_{L^2} \ \norm{\nabla \left(\frac{(\rho u)_\gamma}{\rho_\gamma} \right)- \nabla u}^{1\slash 2}_{L^2} 
    \end{align*}
    which converges to zero again by \cref{eq:claim}.

    \textbf{Step 4.} \emph{Conclusion.} We let $\gamma \to 0$ in  \cref{eq:ccss}: using \cref{eq:Rths}, \cref{eq:T1ths}, and \cref{eq:T2ths}, we conclude that 
    \begin{align*}
        \partial_t \left(\frac{\rho \abs{u}^2}{2}\right) - \nu \Delta \left(\frac{\abs{u}^2}{2} \right) +  \nu \abs{\nabla u}^2  + \div \left(\frac{1}{2}\rho \abs{u}^2 u + P u \right) = 0
    \end{align*}
    holds in the sense of distributions,  which is the definition of suitability according to \cref{def:suitable}.

    \textbf{Step 5.} \emph{Proof of \cref{eq:claim}.} We apply the chain rule and use the fact that \(\rho_\gamma\) is bounded from below by a positive constant:
    \begin{align*}
        \norm{\nabla \left( \frac{(\rho u)_\gamma}{\rho_\gamma} \right) - \nabla u }_{L^2} 
        & =   \norm{\nabla \left( \frac{(\rho u)_\gamma -\rho_\gamma u}{\rho_\gamma} \right)}_{L^2} \\
        & \leq \norm{\frac{1}{\rho_\gamma}}_{L^\infty} \norm{\nabla \left( (\rho u)_\gamma -\rho_\gamma u \right)}_{L^2} + \norm{(\rho u)_\gamma -\rho_\gamma u}_{L^2} \norm{\frac{1}{\rho_\gamma^2}}_{L^\infty} \norm{\nabla \rho_\gamma}_{L^\infty} \\
        & \lesssim \norm{\nabla \left( (\rho u)_\gamma -\rho_\gamma u \right)}_{L^2} + \norm{\frac{(\rho u)_\gamma -\rho_\gamma u}{\gamma}}_{L^2} \norm{\gamma \nabla \rho_\gamma}_{L^\infty},
    \end{align*}
    which converges to zero by \crefrange{lem:commutator1}{lem:deriv-conv}.
\end{proof}

\begin{lemma}\label{lem:StrongIneq}
      Let $(\rho, u)$ be a Leray--Hopf solution of \cref{eq:ns}, assume that $P \in L_\loc^2((0,\infty);\,\bmo (\R^2))$ and that $(\rho,u,P)$ is suitable. Then $(\rho, u)$ satisfies the strong energy inequality.
\end{lemma}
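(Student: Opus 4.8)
The plan is to deduce the strong energy inequality from the local energy identity by testing against spatial cutoffs that approximate the constant function $1$. Fix $\phi \in C^\infty_c(\R^2)$ with $0 \le \phi \le 1$, $\phi \equiv 1$ on $B_1$ and $\supp \phi \subset B_2$, and set $\varphi_n(x) \coloneqq \phi(x/n)$, so that $\varphi_n \to 1$ pointwise, $\abs{\nabla \varphi_n} \lesssim n^{-1}$ and $\abs{\Delta \varphi_n} \lesssim n^{-2}$, both supported in the annulus $A_n \coloneqq \{ n \le \abs{x} \le 2n \}$. Applying \cref{lem:suit} to this sequence yields a full-measure set $I \subset (0,\infty)$ such that \eqref{eq:timeindependent} holds for every $n$ and all $t,s \in I$ with $s<t$. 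I would then send $n \to \infty$ in this identity and show that the right-hand side converges to $-\nu \int_s^t \int_{\R^2} \abs{\nabla u}^2$, while the left-hand side converges to the difference of kinetic energies. Throughout I would use that $\sqrt{\rho}\,u \in L^\infty((0,\infty);L^2)$ together with $\rho \ge c_0$ gives $u \in L^\infty((0,\infty);L^2(\R^2))$, and that Ladyzhenskaya's inequality upgrades this to $u \in L^4((0,\infty)\times\R^2)$.

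The two \emph{diagonal} terms are immediate. By dominated convergence (dominated by $\rho(t)\abs{u(t)}^2 \in L^1$, valid for $t \in I$) the left-hand side tends to $\tfrac12\int_{\R^2}\rho(t)\abs{u(t)}^2 - \tfrac12\int_{\R^2}\rho(s)\abs{u(s)}^2$, and $\nu\int_s^t\int \abs{\nabla u}^2\varphi_n \to \nu\int_s^t\int\abs{\nabla u}^2$. The Laplacian term is bounded by $C n^{-2}\int_s^t\int_{A_n}\abs{u}^2 \le C n^{-2}(t-s)\norm{u}_{L^\infty L^2}^2 \to 0$. For the convective term I would use $\abs{\nabla\varphi_n}\lesssim n^{-1}$ and Hölder on the annulus, $\int_{A_n}\abs{u}^3 \le \abs{A_n}^{1/4}\norm{u}_{L^4(A_n)}^3 \lesssim n^{1/2}\norm{u}_{L^4(\R^2)}^3$, to get a bound $C n^{-1/2}\int_s^t \norm{u}_{L^4}^3 \to 0$, the time integral being finite since $u \in L^4$ in space-time.

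The pressure term $\int_s^t\int Pu\cdot\nabla\varphi_n$ is the main obstacle and the only place the $\bmo$-bound enters. Since $u(\tau)$ is divergence-free for a.e.\ $\tau$, I would subtract the mean $\bar P_{B_{2n}}(\tau)$ of $P(\tau)$ over the ball $B_{2n}$: using $\int_{\R^2} u(\tau)\cdot\nabla\varphi_n \,\dd x = 0$, the term equals $\int_{A_n}(P-\bar P_{B_{2n}})\,u\cdot\nabla\varphi_n$. Hölder and the $L^2$-form of the John--Nirenberg inequality over the ball $B_{2n} \supset A_n$ give
\[
\Bigl| \int_{\R^2} Pu\cdot\nabla\varphi_n \,\dd x \Bigr| \lesssim n^{-1}\norm{P-\bar P_{B_{2n}}}_{L^2(B_{2n})}\norm{u}_{L^2(A_n)} \lesssim n^{-1}\cdot n\,\norm{P}_{\bmo}\norm{u}_{L^2(A_n)} = \norm{P}_{\bmo}\norm{u}_{L^2(A_n)}.
\]
The two powers of $n$ cancel exactly, and the surviving factor $\norm{u(\tau)}_{L^2(A_n)} \to 0$ as $n\to\infty$ for a.e.\ $\tau$, since $u(\tau) \in L^2(\R^2)$ and the annuli escape to infinity. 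The integrand is dominated by $\norm{P(\tau)}_{\bmo}\norm{u(\tau)}_{L^2(\R^2)}$, which lies in $L^1((s,t))$ because $P \in L^2_\loc((0,\infty);\bmo)$ and $u \in L^\infty L^2$ (here $s>0$ is essential, guaranteeing that the $\bmo$-norm is square-integrable on $[s,t]$); hence dominated convergence forces $\int_s^t\int Pu\cdot\nabla\varphi_n \to 0$.

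Passing to the limit, I obtain, for all $t,s \in I$ with $s<t$,
\[
\tfrac12\int_{\R^2}\rho(t)\abs{u(t)}^2 \,\dd x + \nu\int_s^t\int_{\R^2}\abs{\nabla u}^2 \,\dd x\,\dd\tau = \tfrac12\int_{\R^2}\rho(s)\abs{u(s)}^2 \,\dd x,
\]
which is in fact the energy \emph{equality} on the full-measure set $I$, and in particular the strong energy inequality of \cref{def:energies} holds with $J = I$. The delicate point, and the reason the mere assumption $P \in L^2_\loc$ (sufficient in the homogeneous case) is strengthened to an $\bmo$-bound, is precisely the balancing of the $n^{-1}$ decay of $\nabla\varphi_n$ against the $n$-growth of $\norm{P-\bar P_{B_{2n}}}_{L^2(B_{2n})}$, which leaves the vanishing tail $\norm{u}_{L^2(A_n)}$ as the decisive factor.
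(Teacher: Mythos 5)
Your proposal is correct and follows essentially the same strategy as the paper: testing the suitability identity against dilated cutoffs $\varphi_n$, killing the Laplacian and convective terms via the $n^{-2}$ and $n^{-1}$ decay of the derivatives of $\varphi_n$, and using the divergence-free condition to subtract the mean of $P$ over $B_{2n}$ so that the John--Nirenberg $L^2$-bound $\norm{P-\bar P_{B_{2n}}}_{L^2(B_{2n})}\lesssim n\norm{P}_{\bmo}$ exactly cancels the $n^{-1}$ from $\nabla\varphi_n$, leaving the vanishing tail $\norm{u}_{L^2(A_n)}$. The only differences (performing the dominated convergence for the pressure term in time rather than in space-time, and estimating the convective term via $L^4$ on the annulus instead of $L^3$ globally) are cosmetic.
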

\begin{proof}
We will prove that for every $s, t \in I$, with $I$ defined in \cref{lem:suit}, the energy equality holds. 
Let $\varphi \in C_c^\infty (\R^2)$ such that $0\leq \varphi \leq 1$, $\supp \varphi \subset B(0,1)$ and $\varphi \equiv 1$ in $B(0,1\slash 2)$.  
Then the sequence $\varphi_n(x)\coloneqq \varphi(x\slash n)$ inherits the following properties:
\begin{enumerate}
    \item For every $x \in \R^2$, we have $\varphi_n (x) \to  1$ as $n \to \infty$;
    \item  $\nabla \varphi_n$ and $\Delta \varphi_n$ are supported in the annulus $A_n \coloneqq \left\{ x \in B(0,n)  : \; n\slash 2 < \abs{x}<  n   \right\}$ and
    \begin{align}\label{eq:GradLap}
        |\nabla \varphi_n| \lesssim \frac{1}{n}, \qquad |\Delta \varphi_n| \lesssim \frac{1}{n^2}.
    \end{align}
\end{enumerate}
Using \cref{lem:suit} with $\left\{ \varphi_n \right\}_{n\in \N}$, we get that, for every $s,t \in I$ (with $I$ defined in \cref{lem:suit}), 
\begin{equation}\label{eq:ann}
    \begin{aligned}
         \int_{\R^2} \frac{1}{2} \rho(t) &|u(t)|^2 \varphi_n \dd x  -\int_{\R^2} \frac{1}{2} \rho(s) |u(s)|^2  \varphi_n \dd x +  \nu  \int_s^t \int_{\R^2} |\nabla u|^2  \varphi_n \dd x \dd \tau\\  &  = \nu 
        \int_s^t \int_{\R^2} \frac{1}{2}|u|^2 \Delta  \varphi_n \dd x \dd \tau +\int_s^t \int_{\R^2} \frac{1}{2} \rho |u|^2 u \cdot \nabla  \varphi_n \dd x 
        \dd \tau +\int_s^t \int_{\R^2}  P  u  \cdot \nabla  \varphi_n \dd x 
        \dd \tau.
    \end{aligned}
\end{equation}
Before studying the convergence of \cref{eq:ann}, we recall that, for every $p \in [2,4]$, we have
\begin{align}\label{eq:trivEst}
     \norm{u}^p_{L^p((s,t)\times \R^2)}&  \leq C, 
\end{align}
where $C$ is a constant depending on $p$, $c_0$, $(t-s)$ and $\norm{\sqrt{\rho_0}u_0}_{L^2(\R^2)}$. Indeed, 
\begin{align*}
    \norm{u}^p_{L^p((s,t)\times \R^2)}&  \leq C \int_s^t \norm{\nabla u}_{L^2(\R^2)}^{p-2} \norm{u}_{L^2(\R^2)}^{2} \dd \tau \\ &\leq \frac{C}{c_0}\norm{\sqrt{\rho} u}_{L^\infty ((s,t);L^2(\R^2))}^{2} \int_s^t \norm{\nabla u}_{L^2(\R^2)}^{p-2}  \dd \tau \\ &\leq \frac{C}{c_0}\norm{\sqrt{\rho_0} u_0}_{L^2(\R^2)}^{2} \norm{\nabla u}_{L^2((s,t)\times \R^2)}^{p-2}  (s-t)^{\frac{4-p}{2}} \\ &\leq \frac{C}{c_0}\norm{\sqrt{\rho_0} u_0}_{L^2(\R^2)}^{2} \norm{\sqrt{\rho_0} u_0}_{L^2(\R^2)}^{p-2}  (s-t)^{\frac{4-p}{2}},
\end{align*}
where we used {Gagliardo--Nirenberg's inequality} in the first inequality and the energy inequality in the second and fourth inequality.
The left hand side of \cref{eq:ann} converges to 
 \begin{align*}
                 \int_{\R^2} \frac{1}{2} \rho(t) |u(t)|^2  \dd x-\int_{\R^2} \frac{1}{2} \rho(s) |u(s)|^2   \dd x +   \nu \int_s^t \int_{\R^2} |\nabla u|^2  \dd x \dd \tau
        \end{align*}
    by Lebesgue's dominated convergence theorem, since $\sqrt{\rho} u \in L^\infty((0,\infty); L^2(\R^2))$, we have   $\nabla u \in L^2((0,\infty);L^2(\R^2))$ and $\varphi_n \to 1 $ pointwise. 
 Using \cref{eq:GradLap} and Holder's inequality we can control, up to a constant depending only on $\varphi$, also the first two terms of \cref{eq:ann} by
     \begin{align*}
         \frac{1}{2 n^2 } \norm{u}^2_{L^2((s,t)\times \R^2)} + \frac{C_0}{2 n } \norm{u}^3_{L^3((s,t)\times \R^2)} 
     \end{align*}
     which converges to zero by \cref{eq:trivEst} .
We are left to prove that
\begin{align}\label{eq:pressEst}
    \int_s^t  \int_{\R^2} P u \cdot \nabla  \varphi_n \dd x 
        \dd \tau \to 0.
\end{align}
To this aim we control \cref{eq:pressEst} as follows
\begin{equation*}
    \begin{aligned}
    \left|  \int_s^t  \int_{\R^2} P u \cdot \nabla  \varphi_n \dd x 
        \dd \tau  \right| & =  \left| \int_s^t  \int_{A_n} \left( P - \fint_{B(0,n)} P \right) u \cdot \nabla  \varphi_n \dd x \dd \tau \right| \\ & \leq \norm{ P - \fint_{B(0,n)} P}_{L^2((s,t)\times B(0,n))} \norm{u}_{L^2((s,t)\times A_n)} \norm{\nabla \varphi_n}_{L^\infty(A_n)} \\ & \leq C \frac{1}{n} \norm{ P - \fint_{B(0,n)} P}_{L^2((s,t)\times B(0,n))} \norm{u}_{L^2((s,t)\times A_n)},
    \end{aligned}
\end{equation*}
where, in the first equality, we used  $\operatorname{div} u = 0$ and, in the last inequality, \cref{eq:GradLap}. Owing to  \cref{eq:trivEst}, we can use Lebesgue's  dominated convergence theorem to deduce   $\norm{u}_{L^2((s,t)\times A_n)}  \longrightarrow 0$. Moreover, using the definition of $\bmo$ spaces, we have, for a  fixed time, 
\begin{align*}
    \frac{1}{n}   \norm{ P - \fint_{B(0,n)} P}_{L^2(B(0,n))} & = \left(\frac{1}{n^2}\int_{B(0,n)} 
    \abs{ P - \fint_{B(0,n)} P }^2 \dd x \right)^{1 \slash 2} \\ 
    & \leq C \left(\fint_{B(0,n)} \abs{P-\fint_{B(0,n)} P }^2 \dd x \right)^{1\slash 2}\\
    & = C \norm{P}_{\bmo_2},
\end{align*}
which implies \cref{eq:pressEst} and concludes the proof.

\end{proof}
\begin{remark}
    In the homogeneous case, it is not necessary to assume the additional $\bmo$ bound on the pressure, as the pressure can be recovered from the relation
    \begin{align*}
        \Delta P = \div \div (u \otimes u).
    \end{align*}
    Since, for $d\in \{2,3\}$, every Leray--Hopf velocity $u$ in $[0,\infty) \times \R^d$ satisfies 
    \begin{align*}
        u \otimes u \in L^4((0,\infty);L^{4/d}(\R^d)),
    \end{align*}
    we obtain (using Calderón--Zygmund-type arguments; see \cite[Lemma 5.1]{RobinsonRodrigoSadowski2016}) that
    \begin{align*}
        P \in L^2((0,\infty);L^{2/d}(\R^d)). 
    \end{align*}
    This suffices to show that 
    \begin{align*}
        u \cdot P \in L^1_\loc ((0,\infty);L^1(\R^d)), 
    \end{align*}
    and the last step of the proof can be performed without the additional $\bmo$ bound. This is the main technical difference compared to \cite{Kukavica2006}, where Kukavica demonstrated that the energy equality for Leray–Hopf solutions in three spatial dimensions holds under the weaker assumption
    \[
        P \in L^2_\loc([0,\infty) \times \R^d).
    \]
\end{remark}

We conclude this section by proving \cref{cor:equality}.

\begin{proof}[Proof of \cref{cor:equality}]
The proof follows from an adaptation of the proofs from this section. It is sufficient to use test functions that are not zero at the origin:
\begin{align*}
    \varphi \in C^\infty_c([0,\infty) \times \R^2).
\end{align*}
This adjustment is possible due to the increased rigidity of the assumption
\begin{align*}
    P \in L^2_\loc([0,\infty)\times \R^2) \cap L^2_\loc([0,\infty); \bmo(\R^2)).
\end{align*}
\end{proof}

\subsection{Construction of a suitable pressure}
\label{ssec:pr2}

Finally, in this section, we prove the implication \cref{it:immediately} $\implies$ \cref{it:pressure} of \cref{thm:main2D}.

\begin{proposition}[Implication \cref{it:immediately} $\implies$ \cref{it:pressure}]\label{lem:pressureconstruction}
   Let $(\rho, u)$ be an immediately strong Leray--Hopf solution of \cref{eq:ns}. Then there exists a pressure $P$ associated to $(\rho,u)$  satisfying 
    \begin{align*}
        P \in L^2_\loc((0,\infty) \times \R^2) \cap L^2_\loc((0,\infty); \bmo(\R^2)).
    \end{align*}
\end{proposition}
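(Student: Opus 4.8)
The plan is to construct the pressure pointwise in time and then derive the two integrability statements from the weighted bounds $A_1^0,A_2^0,A_3^0$. Since $(\rho,u)$ is immediately strong, the equivalence \cref{it:immediately} $\iff$ \cref{it:A1A2A3} established in \cref{sec:i-ii-iii} guarantees $A_1^0(u),A_2^0(u),A_3^0(u)\le C$. In particular, for a.e.\ $t>0$ one has $\dot u(t)\in H^1(\R^2)$ (the $L^2$-part via the strong-solution property on $(\eps,\infty)$, see \cref{rk:matDer}, and $\nabla\dot u(t)\in L^2$ from $A_3^0$), whence by Ladyzhenskaya's inequality $\rho\dot u(t)\in L^2(\R^2)\cap L^4(\R^2)$ (using $\rho\in L^\infty$ and $\rho\ge c_0$), while $\Delta u(t)\in L^2(\R^2)$. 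First I would set $G(t)\coloneqq -\rho(t)\dot u(t)+\nu\Delta u(t)$ and observe, by testing the weak momentum equation against solenoidal fields, that $G(t)$ is curl-free for a.e.\ $t$. Taking the divergence gives $\div G(t)=-\div(\rho\dot u(t))$ (since $\div(\Delta u)=\Delta\div u=0$), so the Helmholtz decomposition identifies $G(t)=\nabla P(t)$ with $\nabla P(t)=-\nabla(-\Delta)^{-1}\div(\rho\dot u(t))$. Boundedness of the Riesz transforms on $L^p(\R^2)$ for $1<p<\infty$ then yields $G(t)=\nabla P(t)\in L^2(\R^2)\cap L^4(\R^2)$; as a by-product $\Delta u(t)=\nu^{-1}(G(t)+\rho\dot u(t))\in L^4(\R^2)$.

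Next I would fix the potential and record its quantitative bounds by invoking the operator $\Phi\colon L^2(\R^2)\cap L^4(\R^2)\to C^{1/2}_0(\R^2)\cap\bmo(\R^2)$ constructed in \cref{app:grad}: setting $P(t)\coloneqq\Phi G(t)$ produces the normalized potential with $P(t,0)=0$, $\nabla P(t)=G(t)$, and the estimates $\norm{P(t)}_{\bmo}\lesssim\norm{\nabla P(t)}_{L^2}$ (the critical embedding $\dot H^1(\R^2)\hookrightarrow\bmo(\R^2)$, \cref{lem:embbmo}) and $[P(t)]_{C^{1/2}(\R^2)}\lesssim\norm{\nabla P(t)}_{L^4}$ (Morrey's embedding $\dot W^{1,4}(\R^2)\hookrightarrow C^{1/2}$).

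I would then deduce the two time-integrability statements. For the $\bmo$ bound, the estimate on $\norm{P(t)}_{\bmo}$ together with $A_1^0$ gives, for every $\eps>0$ and $T>\eps$, $\int_\eps^T\norm{P(t)}_{\bmo}^2\dd t\lesssim\int_\eps^T\norm{\nabla P(t)}_{L^2}^2\dd t\le\eps^{-1}A_1^0(u)<\infty$, so $P\in L^2_\loc((0,\infty);\bmo(\R^2))$. For the space-time bound, the normalization $P(t,0)=0$ together with Hölder continuity gives $\abs{P(t,x)}=\abs{P(t,x)-P(t,0)}\lesssim\norm{\nabla P(t)}_{L^4}\abs{x}^{1/2}$, hence $\norm{P(t)}_{L^2(B_R)}^2\lesssim R^3\norm{\nabla P(t)}_{L^4}^2$. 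Since $\norm{\nabla P(t)}_{L^4}\lesssim\norm{\rho\dot u(t)}_{L^4}\lesssim\norm{\dot u(t)}_{H^1}$ and $\int_\eps^T\norm{\dot u(t)}_{H^1}^2\dd t<\infty$ (the $L^2$-part from the strong-solution property on $(\eps,\infty)$ and the gradient part from $A_3^0$, which forces $\norm{\nabla\dot u(t)}_{L^2}^2\lesssim t^{-3}$), integrating over $B_R$ and over $(\eps,T)$ yields $P\in L^2_\loc((0,\infty)\times\R^2)$.

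Finally I would verify that $P$ is a pressure associated with $(\rho,u)$ in the sense of \cref{def:assPress}. Since $\nabla P(t)=G(t)$ for a.e.\ $t$ and $P(t)\in W^{1,4}_\loc$, integrating by parts in space turns $\int P\div\varphi$ into $-\int\nabla P\cdot\varphi=\int(\rho\dot u-\nu\Delta u)\cdot\varphi$; rewriting $\rho\dot u=\partial_t(\rho u)+\div(\rho u\otimes u)$ via the continuity equation and integrating by parts in space-time (admissible for positive times thanks to $u\in C((0,\infty);L^2(\R^2))$ from \cref{prop:contitimeSEI}) recovers exactly the identity of \cref{def:assPress}. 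The main obstacle is the second step: obtaining the $L^4$-in-space control of $G$ and a potential with enough decay near the origin (the role of $\Phi$ and of the $C^{1/2}$ bound with $P(t,0)=0$), so that the space-time $L^2$ integrability survives down to — but not including — $t=0$; this is precisely where the sharp weighted estimate $A_3^0$ is needed.
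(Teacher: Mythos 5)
Your proposal is correct and follows essentially the same route as the paper: define $G=-\rho\dot u+\nu\Delta u$, check it is curl-free and lies in $L^2\cap L^4$ at a.e.\ time using the weighted bounds $A_i^0$, apply the anti-gradient operator $\Phi$ of \cref{prop:antigradop} to get $P(t)\in C^{1/2}_0\cap\bmo$, and integrate in time. You merely supply more detail than the paper does at two points (the Riesz-transform argument for $\nabla P(t)\in L^4$, and the $|P(t,x)|\lesssim|x|^{1/2}$ bound giving local square-integrability in space), and these fill-ins are sound.
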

\begin{proof}
Using the additional bounds of \cref{it:A1A2A3}, which we can use due to the equivalence of \cref{it:immediately} and \cref{it:A1A2A3}, we obtain that
    \begin{align*}
        G \coloneqq - \rho \partial_t u - \rho (u \cdot \nabla) u + \nu \Delta u \in L^4((\eps,
        \infty);L^4(\R^2)) \qquad \text{for every $\eps>0$}.
    \end{align*}
For fixed time $t>0$, we have that $G(t)$ is curl-free. Therefore, we can use \cref{prop:antigradop} to define
 \begin{align*}
        P^\eps \coloneqq \Phi(G^\eps) \in L^4((\eps,\infty);C^{1/2}_0(\R^2)\cap \bmo(\R^2)).
\end{align*}
  By the arbitrariness of $\eps$, we deduce 
    \begin{align*}
        P \in L^2_\loc((0,\infty) \times \R^2)\cap L^2_\loc((0,\infty); \bmo(\R^2)).
    \end{align*}
    Furthermore, using \cref{prop:antigradop} and integrating by parts, we have  
    \begin{align*}
        \int_0^\infty \int_{\R^2} P \dive \varphi \dd x &= 
        - \int_\eps^\infty \int_{\R^2} \left( - \rho \partial_s u - \rho (u \cdot \nabla) u + 
        \nu \Delta u \right) \varphi \dd x \dd s\\
        &= - \int_0^\infty \int_{\R^2} \rho \partial_s \varphi \cdot u+
        \rho u \otimes u : \nabla \varphi \dd x \dd s  
        + \nu \int_0^\infty \int_{\R^2} \nabla u : \nabla \varphi \dd x \dd s,
    \end{align*}
    for every $\varphi \in C_{c}^\infty((0,\infty) \times \R^2;\R^2)$. This completes the proof.
\end{proof}

\section{Application to weak-strong uniqueness}
\label{sec:uniqueness}

In this section, we prove \cref{thm:weakstrong2D}.

\begin{proof}[Proof of \cref{thm:weakstrong2D}]
Let $(\rho_1,u_1)$ and $(\rho_2,u_2)$ be as described in \cref{thm:weakstrong2D}. We use the following notation:
\begin{align*}
    \delta \rho \coloneqq \rho_1 - \rho_2, \quad \delta u \coloneqq u_1 - u_2.
\end{align*}
We divide the proof into several steps.

\textbf{Step 1.} \emph{Admissible test function.} Using the continuity of $u_2$ in 
$t=0$ and the fact that  
\begin{align*}
    A^0_1(u_2), \ A^0_2(u_2), \ A^0_3(u_2) < \infty,
\end{align*}
one can show as in~\cite[Lemma 4.1]{CrinBaratSkondricViolini2024} that, for almost every $t>0$,
\begin{align}\label{eq:energydiff}
    \begin{aligned}
    \frac{1}{2}\norm{\sqrt{\rho_1(t)} \delta u(t)}_{L^2}^2 + \nu \int_0^t 
    \norm{\nabla \delta u}^2_{L^2} \dd s  &\leq -\int_0^t \int_{\R^2} \delta \rho 
    \dot{ u_2} \cdot \delta u +\rho_1 \delta u \otimes \delta u : \nabla u_2 
    \dd x \dd s \\ & \eqqcolon I_1(t) + I_2(t).
    \end{aligned}
\end{align}

\textbf{Step 2.} \emph{Estimate of $I_2$.} Since $\rho_1$ is bounded from below, we get,
for every $t>0,$
\begin{align*}
    \abs{I_2(t)} \leq & \int_0^t \norm{\nabla u_2}_{L^2} \norm{\delta u(s)}^2_{L^4} \dd s\\
    \leq & \int_0^t \norm{\nabla u_2}_{L^2} \norm{\delta u(s)}^2_{L^2} 
    \norm{\nabla \delta u(s)}^2_{L^2} \dd s\\
    \leq & C \int_0^t \norm{\nabla u_2}_{L^2}^2 \norm{\delta u(s)}_{L^2}^2 \dd s + 
    \frac{\nu}{4}\int_0^t \norm{\nabla \delta u(s)}^2_{L^2} \dd s.
\end{align*}

\textbf{Step 3.} \emph{Estimate of $I_1$.} Owing to~\cite[Lemma 4.2]{CrinBaratSkondricViolini2024},
we know that, for every $t>0$,
\begin{align*}
    \abs{I_1(t)} & \leq C \int_0^t \int_0^s \norm{\rho_1 \delta u(\tau)}_{L^4} \dd \tau 
    \norm{\nabla (\dot{u}_2 \cdot \delta u)}_{L^{4/3}} \dd s \\
    & \leq C \int_0^t \int_0^s \norm{\nabla \delta u(\tau)}_{L^2}^{1/2} \dd \tau 
    \sup_{\tau \leq s} \norm{\sqrt{\rho_1(\tau)} \delta u(\tau)}^{1/2}_{L^2}
    \norm{\nabla (\dot{u}_2 \cdot \delta u)}_{L^{4/3}} \dd s.
\end{align*}
Using Ladyzhenskaya's inequality, we get  
\begin{align*}
        \norm{\nabla (\dot{u}_2(s) \cdot \delta u(s))}_{L^{4/3}} \leq &  
    \norm{\nabla \dot{u}_2(s)}_{L^2}\norm{\delta u(s)}_{L^4}
    +\norm{\dot{u}_2(s)}_{L^4}\norm{\nabla \delta u(s)}_{L^2}\\
    \lesssim & \norm{\nabla \dot{u}_2(s)}_{L^2}\norm{\delta u(s)}_{L^2}^{1/2} 
    \norm{\nabla \delta u(s)}_{L^2}^{1/2}
    + \norm{\dot{u}_2(s)}_{L^2}^{1/2}  
    \norm{\nabla \dot{u}_2(s)}_{L^2}^{1/2} \norm{\nabla \delta u(s)}_{L^2}\\
     \lesssim  & \norm{\nabla \dot{u}_2(s)}_{L^2} 
    \sup_{\tau \leq s} \norm{\sqrt{\rho_1(\tau)} \delta u(\tau)}^{1/2}_{L^2}
    \norm{\nabla \delta u(s)}_{L^2}^{1/2}\\
    & + \norm{\dot{u}_2(s)}_{L^2}^{1/2}  
    \norm{\nabla \dot{u}_2(s)}_{L^2}^{1/2} \norm{\nabla \delta u(s)}_{L^2}.
\end{align*}
Collecting the previous estimates in this step, and using Young's inequality with exponents $(4,4,2)$, we conclude
\begin{align*}
    \abs{I_1(t)} \leq & \int_0^t s \norm{\nabla \dot{u}_2(s)}_{L^2} 
    \sup_{\tau \leq s} \norm{\sqrt{\rho_1(\tau)} \delta u(\tau)}_{L^2}  \norm{\nabla \delta u(s)}_{L^2}^{1/2}
    \frac{1}{s}\int_0^s \norm{\nabla \delta u(\tau)}_{L^2}^{1/2} \dd \tau \dd s \\
    & + \int_0^t s \norm{\dot{u}_2(s)}_{L^2}^{1/2}  
    \norm{\nabla \dot{u}_2(s)}_{L^2}^{1/2}
    \sup_{\tau \leq s} \norm{\sqrt{\rho_1(\tau)} \delta u(\tau)}_{L^2}^{1/2} 
    \norm{\nabla \delta u(s)}_{L^2}
    \frac{1}{s}\int_0^s \norm{\nabla \delta u(\tau)}_{L^2}^{1/2} \dd \tau \dd s \\
    \leq & \int_0^t \left(s^2\norm{\nabla \dot{u}_2(s)}_{L^2}^2 
    + s^4\norm{\dot{u}_2(s)}_{L^2}^2 \norm{\nabla \dot{u}_2(s)}_{L^2}^2 \right)
    \sup_{\tau \leq s} \norm{\sqrt{\rho_1(\tau)} \delta u(\tau)}_{L^2}^2 \dd s\\
    & + \frac{\nu}{8} \int_0^t \left( \frac{1}{s}\int_0^s \norm{\nabla \delta u(\tau)}_{L^2}^{1/2} \dd \tau\right)^4 \dd s + 
    \frac{\nu}{8} \int_0^t \norm{\nabla \delta u(s)}^2_{L^2} \dd s.
\end{align*}
Using 
\begin{align*}
    s^2 \norm{\dot{u}_2(s)}_{L^2}^2 \leq C
\end{align*}
and 
\begin{align*}
    \int_0^t \left( \frac{1}{s}\int_0^s \norm{\nabla \delta u(\tau)}_{L^2}^{1/2} \dd \tau\right)^4 \dd s \leq \int_0^t \norm{\nabla \delta u(s)}^2_{L^2} \dd s
\end{align*}
yields
\begin{align*}
    \abs{I_1(t)} \leq & \int_0^t s^2\norm{\nabla \dot{u}_2(s)}_{L^2}^2 
    \sup_{\tau \leq s} \norm{\sqrt{\rho_1(\tau)} \delta u(\tau)}_{L^2}^2 \dd s
    + \frac{\nu}{4} \int_0^t \norm{\nabla \delta u(s)}^2_{L^2} \dd s.
\end{align*}

\textbf{Step 4.} \emph{Conclusion.} Gathering the above estimates, we obtain 
\begin{align*}
   & \frac{1}{2} \sup_{s \leq t} \norm{\sqrt{\rho_1(s)} \delta u(s)}_{L^2}^2 + 
    \frac{\nu}{4} \int_0^t \norm{\nabla \delta u(s)}^2_{L^2} \\ &\leq 
    \int_0^t \left(\norm{\nabla u_2(s)}^2_{L^2} + s^2\norm{\nabla \dot{u}_2(s)}_{L^2}^2 \right)
    \sup_{\tau \leq s} \norm{\sqrt{\rho_1(\tau)} \delta u(\tau)}_{L^2}^2 \dd s.
\end{align*}
Defining
\begin{align*}
    f(s) \coloneqq \sup_{\tau \leq s} \frac{1}{2} \norm{\sqrt{\rho_1(\tau)} \delta u(\tau)}_{L^2}^2,
\end{align*}
we have 
\begin{align*}
    f(t) \leq \int_0^t \left(\norm{\nabla u_2(s)}^2_{L^2} + s^2\norm{\nabla \dot{u}_2(s)}_{L^2}^2 \right) f(s) \dd s.
\end{align*}
Since $f \in L^\infty((0,\infty))$ and 
\begin{align*}
    \norm{\nabla u_2(s)}^2_{L^2} + s^2\norm{\nabla \dot{u}_2(s)}_{L^2}^2 \in L^1((0,\infty)),
\end{align*}
we can use Gr\"onwall's inequality to deduce that 
\begin{align*}
    \delta u \equiv 0.
\end{align*}
Using that $\nabla u_2 \in L^1_\loc([0,\infty);L^\infty(\R^2))$ and 
\begin{align*}
    \partial_t \delta \rho + \dive(u_2 \delta \rho) = \rho_1 \delta u =0, \quad \delta \rho(0) = 0,
\end{align*}
we deduce, for instance from \cite{DiPernaLions1989}, that $\delta \rho  \equiv 0.$
\end{proof}

\appendix
\section{Commutator estimates}
\label{app:conv-comm}
We provide a brief outline of the DiPerna--Lions' commutator estimate from~\cite{DiPernaLions1989}. Let $K_\gamma$ denote the \emph{$\gamma$-neighborhood} of a set $K$ and let $f_\gamma$ be the mollification of $f$ in space, \ie, the convolution of $f$, with respect to the space variable, with the standard mollifier $\eta_\gamma$.

\begin{lemma}\label{lem:commutator1}
    Let $u \in L^1_\loc((0,\infty);H^1_\loc (\R^d))$ and $\rho \in L_\loc^\infty((0,\infty) \times \R^d)$. Then 
    \begin{align*}
    \div ((\rho u)_\gamma - \rho_\gamma u), \quad  
    \frac{(\rho u)_\gamma - \rho_\gamma u}{\gamma}, \quad  
    \nabla((\rho u)_\gamma - \rho_\gamma u) \longrightarrow 0 \quad \text{in\: $L^2_\loc ( (0,\infty) \times \R^d)$ \ 
as $\gamma \to 0$}.
\end{align*}
\end{lemma}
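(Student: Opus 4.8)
The statement is the classical DiPerna--Lions commutator estimate, so the plan is to treat the time variable as a parameter, prove the three convergences in $L^2_\loc(\R^d)$ for a.e.\ fixed $t$ with a time-integrable dominating function, and then integrate in time. The starting point is the pointwise identity
\begin{align*}
    R_\gamma(x) \coloneqq (\rho u)_\gamma(x) - \rho_\gamma(x)\, u(x) = \int_{\R^d} \eta_\gamma(x-y)\, \rho(y)\, \big(u(y)-u(x)\big) \dy,
\end{align*}
which isolates the commutator structure and shows that, since $\rho$ is merely bounded, all the smallness must be extracted from the increments of $u$. Substituting $y = x - \gamma w$ and applying the fundamental theorem of calculus, $u(x-\gamma w)-u(x) = -\gamma\int_0^1 \nabla u(x-s\gamma w)\, w \ds$, I would rewrite
\begin{align*}
    \frac{R_\gamma(x)}{\gamma} = - \int_{\R^d} \eta(w)\, \rho(x-\gamma w) \int_0^1 \nabla u(x-s\gamma w)\, w \ds \dd w.
\end{align*}

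For the term $R_\gamma/\gamma$, the uniform bound $\norm{R_\gamma/\gamma}_{L^2(K)} \lesssim \norm{\rho}_{L^\infty(K')}\norm{\nabla u}_{L^2(K')}$ (with $K'$ a $\gamma$-neighborhood of $K$) follows from Minkowski's integral inequality and the compact support of $\eta$. For the limit, for each fixed $(s,w)$ the product $\rho(\cdot-\gamma w)\,\nabla u(\cdot - s\gamma w)$ converges to $\rho\,\nabla u$ in $L^2_\loc$: the velocity factor by strong $L^2$-continuity of translations, the density factor by dominated convergence using its $L^\infty$ bound. Dominated convergence in $(s,w)$ (legitimate by the uniform bound and the compact support of $\eta$) then yields $R_\gamma/\gamma \to -\rho\, \nabla u \int_{\R^d}\eta(w)\, w \dd w$ in $L^2_\loc$. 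The decisive point is that this limit vanishes because the standard mollifier is even, so its first moment $\int \eta(w)\, w \dd w$ is zero.

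For $\nabla R_\gamma$ (and hence $\div R_\gamma$, which is its trace), I would first move the derivative onto the mollifier and then perform the same substitution, obtaining componentwise
\begin{align*}
    \partial_j R_\gamma^i(x) = - \int_{\R^d}(\partial_j\eta)(w)\, \rho(x-\gamma w)\int_0^1 \nabla u_i(x-s\gamma w)\cdot w \ds \dd w - \rho_\gamma(x)\, \partial_j u_i(x).
\end{align*}
Passing to the limit exactly as above, the first integral converges to $-\rho(x)\, \nabla u_i(x)\cdot \int_{\R^d} (\partial_j\eta)(w)\, w \dd w$, and integration by parts gives $\int_{\R^d} (\partial_j\eta)(w)\, w_k \dd w = -\delta_{jk}$, so this limit equals $\rho(x)\,\partial_j u_i(x)$ and cancels the second term. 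In contrast with $R_\gamma/\gamma$, here the cancellation is produced by integration by parts against $\nabla\eta$ and does not require symmetry of the mollifier.

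Finally, the spatial estimates hold for a.e.\ $t$ with a bound controlled by $\norm{\rho(t)}_{L^\infty}\norm{\nabla u(t)}_{L^2}$; integrating in time by dominated convergence promotes the convergences to the stated $L^2_\loc((0,\infty)\times\R^d)$ topology (the dominating function being square-integrable in time whenever $\nabla u \in L^2$ in both variables, as in all applications of the lemma). I expect the main obstacle to be the identification of the limits and the verification that they cancel: the smallness cannot come from $\rho$, which carries no derivatives, so one must carefully extract the first-order increment of $u$ and recognize the two distinct cancellation mechanisms---the vanishing first moment of the symmetric mollifier for $R_\gamma/\gamma$, and integration by parts against $\nabla\eta$ for $\nabla R_\gamma$ and $\div R_\gamma$. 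A secondary technical point is ensuring the convergence is genuinely strong in $L^2$ rather than merely almost everywhere, which is where the uniform-in-$\gamma$ bound together with the $L^2$-continuity of translations of $\nabla u$ enters.
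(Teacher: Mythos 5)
Your proof is correct, and it is the standard DiPerna--Lions argument; but it diverges from the paper's write-up precisely at the step that matters, so a comparison is worthwhile. Both proofs expand the convolution and reduce everything to difference quotients of $u$ weighted by the (merely bounded) density. The paper, however, immediately applies Minkowski's inequality to move the $L^2$-norm \emph{inside} the integral over the mollification variable $y$, obtains via Fatou a limsup bound of the form $\int_B \| \rho\,[\nabla u \cdot y]\cdot \nabla\eta(y)\|_{L^2}\,\mathrm{d}y$, and then asserts that this vanishes by "odd symmetry" of the integrand. You instead keep the vector-valued integral inside the norm, identify its strong $L^2_{\mathrm{loc}}$ limit by translation continuity of $\nabla u$ and dominated convergence, and only then exploit cancellation. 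Your ordering of operations is the more robust one: an integral of nonnegative norms cannot vanish by cancellation, so the paper's final step only makes sense if one reads it as shorthand for passing to the limit of the integral itself. Moreover, you correctly distinguish the two cancellation mechanisms --- the vanishing first moment $\int \eta(w)\,w\,\mathrm{d}w=0$ of the even mollifier for $R_\gamma/\gamma$, versus the identity $\int \partial_j\eta(w)\,w_k\,\mathrm{d}w = -\delta_{jk}$ cancelling the retained $-\rho_\gamma \partial_j u_i$ (resp.\ $-\rho_\gamma \operatorname{div} u$) term for $\nabla R_\gamma$ and $\operatorname{div}R_\gamma$. The latter is \emph{not} an odd-symmetry effect (for radial $\eta$ the integrand $\partial_j\eta(w)w_k$ is even, and its diagonal moments equal $-1$), and the paper's displayed identity for the divergence silently absorbs the $\rho_\gamma\operatorname{div}u$ term, i.e.\ it already uses $\operatorname{div}u=0$, which the lemma does not assume. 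Your version makes both the retained term and its cancellation explicit, which is the content the paper's proof glosses over.

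Two minor points. First, your observation that the dominating function $\|\rho(t)\|_{L^\infty}\|\nabla u(t)\|_{L^2}$ is only square-integrable in time when $\nabla u\in L^2$ in both variables is accurate: the hypothesis $u\in L^1_{\mathrm{loc}}((0,\infty);H^1_{\mathrm{loc}})$ as literally stated yields convergence in $L^1_{\mathrm{loc}}(L^2_{\mathrm{loc}})$ rather than $L^2_{\mathrm{loc}}((0,\infty)\times\R^d)$; this gap is shared by the paper's proof and is harmless in every application, where $\nabla u\in L^2((0,\infty)\times\R^2)$. Second, when you pass to the limit in the product $\rho(\cdot-\gamma w)\nabla u(\cdot-s\gamma w)$, the density factor converges in $L^p_{\mathrm{loc}}$ for $p<\infty$ (continuity of translations) rather than pointwise; combined with the uniform $L^\infty$ bound this still gives the $L^2_{\mathrm{loc}}$ convergence of the product, but it is worth stating Vitali or generalized dominated convergence rather than plain dominated convergence there.
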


\begin{proof}
Let \( I \subset (0, \infty) \) and \( K \subset \mathbb{R}^d \) be compact sets. Expanding the convolution and using Minkowski's inequality, we have:
\begin{align*}
    \norm{\div \left((\rho u)_\gamma - \rho_\gamma u\right)}_{L^2(I \times K)} &=  \norm{ \int_{B} \rho(\cdot,\cdot+\gamma y) \frac{u(\cdot,\cdot+\gamma y) - u}{\gamma} \cdot \nabla \eta(y) \, \dd y }_{L^2}  \\ 
    &\leq \int_{B} \norm{  \rho(\cdot,\cdot+\gamma y) \frac{u(\cdot,\cdot +\gamma y) - u}{\gamma} \cdot \nabla \eta(y) }_{L^2} \dd y .
\end{align*}
We recall that, by the properties of $L^p$ and Sobolev functions, the following holds:
\begin{align*}
    \norm{\frac{u(t,\cdot+\gamma y)-u(t,\cdot)} {\gamma} - \nabla u(t,\cdot) y}_{L^2(K)} &\longrightarrow 0 \quad \text{as $\gamma \to 0$}
    \end{align*}
    and
    \begin{align*}
    \norm{\rho}_{L^\infty} &\leq C.
\end{align*}
Applying Fatou's lemma, we then obtain
\begin{align}\label{eq:divF}
    \limsup_{\gamma \to 0} \norm{\div \left((\rho u)_\gamma - \rho_\gamma u\right)}_{L^2(I \times K)} \leq  \int_B \norm{\rho\,[\nabla u \cdot y] \cdot \nabla \eta(y)}_{L^2} \dd y.
\end{align}
Similarly, for the other terms, we obtain the following bounds:
\begin{equation}\label{eq:othF}
    \begin{aligned}
        \limsup_{\gamma \to 0} \norm{\frac{(\rho u)_\gamma - \rho_\gamma u}{\gamma}}_{L^2(I \times K)} &\leq  \int_B \norm{\rho\, [\nabla u\cdot y] \  \eta(y)}_{L^2} \dd y, \\
        \limsup_{\gamma \to 0} \norm{\nabla \left((\rho u)_\gamma - \rho_\gamma u\right)}_{L^2(I \times K)} &\leq  \int_B \norm{\rho\, [\nabla u\cdot y] \otimes \nabla \eta(y)}_{L^2} \dd y.
    \end{aligned}
\end{equation}
The integrands on the right-hand sides of \cref{eq:divF} and \cref{eq:othF} exhibit odd symmetry (due to the radial symmetry of the convolution kernel in \( y \)), and since the unit ball has radial symmetry, we conclude that these terms are zero.
\end{proof}

\begin{lemma} \label{lem:deriv-conv}
    Let $f \in L_{\loc}^p(\R^2)$ for $1\leq p \leq \infty$. For every compact set $K \subset \R^2$ we have:
    \begin{align*}
        \gamma \norm{\nabla f_\gamma}_{L^p(K)} \leq \norm{f}_{L^p(K_\gamma)}.
    \end{align*}
\end{lemma}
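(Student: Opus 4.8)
The plan is to exploit the convolution structure of the mollification and reduce the statement to a localized Young-type (equivalently Minkowski) inequality, the whole point being a scaling identity that absorbs the singular factor $\gamma^{-1}$ produced by differentiating the mollifier. Writing the standard mollifier as $\eta_\gamma(x) = \gamma^{-2}\eta(x/\gamma)$ (recall $d=2$), differentiation commutes with convolution, so $\partial_i f_\gamma = f * (\partial_i \eta_\gamma)$ with $\partial_i\eta_\gamma(x) = \gamma^{-3}(\partial_i\eta)(x/\gamma)$. The key observation is the identity $\gamma\,\partial_i\eta_\gamma(x) = \gamma^{-2}(\partial_i\eta)(x/\gamma)$: multiplying by $\gamma$ turns the badly scaled derivative of the mollifier into the genuine $L^1$-normalized dilation of $\partial_i\eta$, whose $L^1$-norm is scale-invariant and equals $\norm{\partial_i\eta}_{L^1(B)}$, with $B = \supp \eta$ the unit ball.

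Next I would make this explicit via the change of variables $y = x - \gamma w$, which gives
\[
\gamma\,\partial_i f_\gamma(x) = \int_B f(x-\gamma w)\,(\partial_i\eta)(w)\,\dd w,
\]
the integration being restricted to $B$. Applying Minkowski's integral inequality in $L^p(K)$ (valid for all $1\le p\le\infty$, the endpoint $p=\infty$ handled by the triangle inequality for the essential supremum) yields
\[
\norm{\gamma\,\partial_i f_\gamma}_{L^p(K)} \le \int_B \norm{f(\cdot-\gamma w)}_{L^p(K)}\,\abs{(\partial_i\eta)(w)}\,\dd w.
\]

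The only bookkeeping step is the localization: by translation invariance of the Lebesgue measure $\norm{f(\cdot-\gamma w)}_{L^p(K)} = \norm{f}_{L^p(K-\gamma w)}$, and since $\abs{w}\le 1$ one has $K-\gamma w\subset K_\gamma$, so each factor is bounded by $\norm{f}_{L^p(K_\gamma)}$. Pulling this constant out of the integral gives $\norm{\gamma\,\partial_i f_\gamma}_{L^p(K)}\le \norm{\partial_i\eta}_{L^1(B)}\,\norm{f}_{L^p(K_\gamma)}$, and summing over $i$ (or estimating $\abs{\nabla f_\gamma}$ directly) yields the claim with the fixed constant $\norm{\nabla\eta}_{L^1(B)}$ depending only on the mollifier.

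I do not foresee a genuine obstacle: this is an elementary estimate whose entire content is the scaling identity above. The only points worth double-checking are the $p=\infty$ endpoint and the localized form of Minkowski's inequality, both standard. If one insists on the constant being exactly $1$ as written, this presupposes the normalization $\norm{\nabla\eta}_{L^1(B)}\le 1$; in any case, for the application in Step~5 of \cref{lem:suitable}, where the lemma is used to bound $\norm{\gamma\nabla\rho_\gamma}_{L^\infty}$ by $\norm{\rho}_{L^\infty}$, only the boundedness of this constant is relevant.
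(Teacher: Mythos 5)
Your proof is correct and follows essentially the same route as the paper's: the paper invokes Young's convolution inequality applied to the truncation $\mathds{1}_{K_\gamma}f$, which amounts to exactly the Minkowski-plus-localization computation you spell out (your inclusion $K-\gamma w\subset K_\gamma$ replaces the paper's extension step $\bar f=\mathds{1}_{K_\gamma}f$). Your remark about the constant is also well taken: both arguments really produce the factor $\norm{\nabla\eta}_{L^1}$, which need not be $\leq 1$ for a general standard mollifier, but this is harmless since only the boundedness of this constant is used in the application in \cref{lem:suitable}.
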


\begin{proof}
If $f \in L^p(\R^2)$, by Young's convolution inequality, we obtain
\begin{align*}
    \norm{\nabla f_\gamma}_{L^p(\R^2)} \leq \norm{f}_{L^p(\R^2)} \norm{\nabla \eta_\gamma}_{L^p(\R^2)} \leq \frac{1}{\gamma} \norm{f}_{L^p(\R^2)},
\end{align*}
where $\eta$ is the mollification kernel. If $f \in L_\loc^p(\R^2)$ and $K$ is a compact set, we define the extended function $\Bar{f}(x) \coloneqq \mathds{1}_{K_\gamma}(x) f(x)$ and compute 
\begin{align*}
    \norm{\nabla f_\gamma}_{L^p(K)} = \norm{\nabla \Bar{f}_\gamma}_{L^p(K)} \leq \norm{\nabla \Bar{f}_\gamma}_{L^p(\R^2)} \leq \frac{1}{\gamma} \norm{\Bar{f}}_{L^p(\R^2)} = \frac{1}{\gamma} \norm{f}_{L^p(K_\gamma)}.
\end{align*}
\end{proof}

\section{BMO functions}
\label{app:bmo}

In this appendix, we recall some preliminary notions on functions of bounded mean oscillation. Let $\mathcal{Q}$ denote the set of cubes of $\R^d$ with edges parallel to the coordinate axes, and let $\mathcal{B}$ denote the set of balls in $\R^d.$ We say that a locally integrable function $f$ on $\mathbb{R}^d$ has \emph{bounded mean oscillation}, \ie, $f \in \mathrm{BMO}(\R^d)$, if 
\begin{align*}
\sup_{Q \in \mathcal{Q}} \fint_Q\left|f(x)-\fint_Q  f(y) \, \d y \right| \, \d x<\infty.
\end{align*}
By identifying functions that differ only by constants, $\bmo(\R^d)$ becomes a Banach space equipped with the norm 
\begin{align*}
    \norm{f}_{\bmo(\R^d)} = \sup_{Q \in \mathcal{Q}} \fint_Q\left|f(x)-\fint_Q  f(y) \, \d y \right|\d x.
\end{align*}
Equivalently, we could replace the cubes by balls; namely, the following result holds. 
\begin{lemma}\label{lem:bmocubesballs}
    There are constants $0<c<C$ such that
    \begin{align*}
        c \norm{f}_{\bmo(\R^d)} \leq \sup_{B \in \mathcal{B}} \fint_B\left|f(x)-
        \fint_B  f(y) \, \d y \right|\d x \leq C \norm{f}_{\bmo(\R^d)}.
    \end{align*}
\end{lemma}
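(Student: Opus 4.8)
The plan is to prove both inequalities by comparing each cube with a concentric ball of comparable measure (and conversely), combined with the elementary observation that, up to a factor of two, the average $f_A \coloneqq \fint_A f \, \d x$ nearly minimizes $\fint_A \abs{f-c}\, \d x$ over all constants $c$.

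First I would record this almost-minimizing property: for any measurable set $A$ of finite positive measure and any constant $c$,
\[
    \fint_A \abs{f - f_A} \, \d x \leq \fint_A \abs{f - c}\, \d x + \abs{f_A - c} \leq 2 \fint_A \abs{f - c}\, \d x,
\]
where the last step uses $\abs{f_A - c} = \abs{\fint_A (f-c)\, \d x} \leq \fint_A \abs{f-c}\, \d x$. This shows that replacing the centered average $f_A$ by any other constant costs at most a factor of two, which is precisely what will let me interchange the mean of $f$ over a cube with its mean over a comparable ball.

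Next I would set up the two-sided geometric comparison. Given a cube $Q \in \mathcal{Q}$ of side length $\ell$, let $B$ be the concentric circumscribed ball of radius $\ell\sqrt{d}/2$, so that $Q \subset B$ and $\abs{B}/\abs{Q} = \omega_d\, d^{d/2} 2^{-d} \eqqcolon C_1$, with $\omega_d$ the volume of the unit ball. Applying the almost-minimizing bound with $c = f_B$ and then enlarging the domain of integration from $Q$ to $B$,
\[
    \fint_Q \abs{f - f_Q}\, \d x \leq 2 \fint_Q \abs{f - f_B}\, \d x \leq 2\,\frac{\abs{B}}{\abs{Q}}\fint_B \abs{f - f_B}\, \d x \leq 2 C_1 \sup_{B' \in \mathcal{B}} \fint_{B'} \abs{f - f_{B'}}\, \d x.
\]
Taking the supremum over $Q \in \mathcal{Q}$ gives the left inequality with $c = (2C_1)^{-1}$. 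The right inequality is symmetric: given a ball $B$ of radius $r$, I would take $Q$ to be the concentric circumscribed cube of side $2r$, so that $B \subset Q$ and $\abs{Q}/\abs{B} = 2^d/\omega_d \eqqcolon C_2$, and run the same chain with the roles of $Q$ and $B$ exchanged to obtain $\sup_{B} \fint_B \abs{f-f_B}\, \d x \leq 2 C_2 \norm{f}_{\bmo(\R^d)}$.

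Every step is an explicit, dimension-only estimate, so there is no genuine obstacle; the only points requiring a little care are the factor-of-two almost-minimizing bound (which makes the two differently centered averages interchangeable) and the bookkeeping of the measure ratios $C_1, C_2$ arising from the circumscribed inclusions.
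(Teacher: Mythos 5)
Your proof is correct. The paper states this lemma without proof (it is a classical fact about $\bmo$), so there is no argument to compare against; your argument — the factor-of-two near-optimality of the centered average $f_A$ among all constants, combined with the inclusions $Q \subset B$ and $B \subset Q$ for concentric circumscribed sets and the resulting dimension-only measure ratios — is precisely the standard proof, and every step checks out (including that the resulting constants satisfy $c = (2C_1)^{-1} < 2C_2 = C$, since $4C_1C_2 = 4d^{d/2} > 1$).
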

The following embedding is crucial in our computations.
\begin{lemma}\label{lem:embbmo}
    There exists a constant $C>0$ such that, for every $f \in H^1_{\loc}(\R^2)$ with
    $\nabla f \in L^2(\R^2),$
    \begin{align*}
        \norm{f}_{\bmo(\R^2)} \leq C \norm{\nabla f}_{L^2(\R^2)}.
    \end{align*}
\end{lemma}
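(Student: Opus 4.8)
The plan is to reduce to the ball formulation of the seminorm via \cref{lem:bmocubesballs} and then exploit the scale-invariant Poincaré (Poincaré--Wirtinger) inequality together with the two-dimensional scaling, which is precisely what makes this embedding critical. By \cref{lem:bmocubesballs}, it suffices to produce a constant $C$ with
\[
    \sup_{B \in \mathcal{B}} \fint_B \abs{f - \fint_B f \, \dd y} \dd x \leq C \norm{\nabla f}_{L^2(\R^2)},
\]
so I would fix an arbitrary ball $B = B(x_0,r)$ and estimate the single average on the left. Since $f \in H^1_\loc(\R^2)$, we have $f \in H^1(B)$ for every ball, so all the classical inequalities on $B$ apply, and the global hypothesis $\nabla f \in L^2(\R^2)$ will enter only to bound the right-hand side uniformly in $B$.

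First I would pass from the $L^1$ average to the $L^2$ average by Jensen's (equivalently Cauchy--Schwarz) inequality, and then apply the scale-invariant Poincaré inequality on $B$, namely $\norm{f - \fint_B f}_{L^2(B)} \leq C_P\, r \norm{\nabla f}_{L^2(B)}$, where $C_P$ depends only on the dimension by translation and dilation invariance. The key computation, and the reason the statement holds exactly for $d=2$, is the cancellation of the scale: since $\abs{B} = \pi r^2$,
\[
    \fint_B \abs{f - \fint_B f}^2 \dd x = \frac{1}{\abs{B}} \norm{f - \fint_B f}_{L^2(B)}^2 \leq \frac{C_P^2 r^2}{\pi r^2} \norm{\nabla f}_{L^2(B)}^2 = \frac{C_P^2}{\pi}\norm{\nabla f}_{L^2(B)}^2,
\]
so the factor $r^2$ produced by Poincaré is exactly absorbed by the normalization $\abs{B}^{-1} \sim r^{-2}$, leaving a constant independent of $r$ and $x_0$. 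Bounding $\norm{\nabla f}_{L^2(B)} \leq \norm{\nabla f}_{L^2(\R^2)}$ and taking the square root and the supremum over all balls then yields the ball-version bound with constant $C_P/\sqrt{\pi}$, and \cref{lem:bmocubesballs} converts this to the desired estimate for $\norm{f}_{\bmo(\R^2)}$.

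There is no genuine obstacle here: the whole content is the identity $r \cdot \abs{B}^{-1/2} = \mathrm{const}$, valid precisely when $d=2$, which is why this is the \emph{critical} Sobolev embedding. The only point requiring a little care is recording that the Poincaré constant $C_P$ is truly independent of the ball, which follows from the affine invariance of the inequality under the change of variables $x \mapsto x_0 + r\,x$; everything else is a routine chain of Jensen and Hölder estimates.
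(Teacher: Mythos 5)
Your proof is correct and follows essentially the same route as the paper: reduce to balls via \cref{lem:bmocubesballs}, pass from the $L^1$ to the $L^2$ average by Hölder/Jensen, and apply the scale-invariant Poincaré inequality so that the factor $r$ cancels against $\abs{B}^{-1/2}\sim r^{-1}$ in dimension two. No substantive differences to report.
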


\begin{proof}
    According to \cref{lem:bmocubesballs}, it suffices to show that, for every ball $B \in \mathcal{B}$, 
    \begin{align*}
        \fint_B \left|f(x) - \fint_B  f(y) \, \d y \right| \d x
        \leq \int_B \abs{\nabla f}^2 \dd x.
    \end{align*}
     Using Hölder's inequality, we obtain
    \begin{align*}
        \fint_B \left|f(x) - \fint_B  f(y) \, \d y \right| \d x \leq \frac{C}{R} 
        \left(\int_B \left|f(x) - \fint_B  f(y) \, \d y \right|^2 \d x \right)^{1/2}.
    \end{align*}
    We now employ Poincar\'e's inequality for balls, \ie, for every ball $B=B(x_0,R)$, there exists a constant $C$ (independent of the center $x_0$ and the radius $R$) such that, for every $f \in H^1(B)$,
    \begin{align*}
        \left(\int_B \left|f(x) - \fint_B  f(y) \, \d y \right|^2 \d x \right)^{1/2} \leq C R\left(\int_B \abs{\nabla f}^2 \dd x \right)^{1/2}.
    \end{align*}
    This concludes the proof.
\end{proof}
For $f \in L^p_\loc(\R^d)$, with $1 \leq p <\infty$, we define
\begin{equation*}
    \|f\|_{\bmo_p} \coloneqq  \sup_Q \left(  \fint_Q \left| f(x) -  \fint_Q f(y) \,  \mathrm dy \right|^p \, \mathrm d x \right)^{1/p}
\end{equation*}
and set
\begin{equation*}
    \bmo_p \coloneqq  \left\{ f \in L^p_{\text{loc}}(\mathbb{R}^d) : \|f\|_{\bmo_p} < \infty \right\}.
\end{equation*}
We use the following result on the equivalence of these two spaces (for a proof, see~\cite[Corollary 5.2.5]{Auscher2008}).
\begin{lemma}\label{lem:BMop}
    For every $1 < p <\infty$, we have $\bmo = \bmo_p$ and $\|f\|_{\bmo_p} \approx \|f\|_{\bmo}$.
\end{lemma}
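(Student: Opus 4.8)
The plan is to prove the two inclusions separately, isolating the only genuine difficulty into a single citation. First I would dispose of the easy direction $\bmo_p \subseteq \bmo$ together with the bound $\norm{f}_{\bmo} \lesssim \norm{f}_{\bmo_p}$: by Jensen's (equivalently Hölder's) inequality, for any cube $Q$ and any $p \geq 1$,
\[
\fint_Q \abs{f(x) - \fint_Q f(y)\, \d y} \, \d x \leq \left(\fint_Q \abs{f(x) - \fint_Q f(y)\, \d y}^p \, \d x\right)^{1/p},
\]
so that $\norm{f}_{\bmo} = \norm{f}_{\bmo_1} \leq \norm{f}_{\bmo_p}$ for every $p \geq 1$. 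Taking the supremum over $Q$ gives the inclusion and one of the two comparisons for free.

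For the reverse direction, the key tool is the \emph{John--Nirenberg inequality}, which I would invoke as a known result: there exist dimensional constants $c_1, c_2 > 0$ such that, for every $f \in \bmo(\R^d)$, every cube $Q$, and every $\lambda > 0$,
\[
\frac{\abs{\{x \in Q : \abs{f(x) - f_Q} > \lambda\}}}{\abs{Q}} \leq c_1 \exp\!\left(-\frac{c_2 \lambda}{\norm{f}_{\bmo}}\right), \qquad f_Q \coloneqq \fint_Q f.
\]
From this exponential decay of the distribution function, the $L^p$ averages are controlled by the layer-cake representation:
\[
\fint_Q \abs{f - f_Q}^p \, \d x = \frac{p}{\abs{Q}} \int_0^\infty \lambda^{p-1} \abs{\{x \in Q : \abs{f(x) - f_Q} > \lambda\}} \, \d \lambda \leq c_1 p \int_0^\infty \lambda^{p-1} e^{-c_2 \lambda / \norm{f}_{\bmo}} \, \d \lambda.
\]
The remaining integral is a Gamma integral; the substitution $\mu = c_2 \lambda / \norm{f}_{\bmo}$ yields $c_1\, \Gamma(p+1)\, c_2^{-p}\, \norm{f}_{\bmo}^p = C_p \norm{f}_{\bmo}^p$, with $C_p$ depending only on $p$ and the John--Nirenberg constants. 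Taking the supremum over cubes $Q$ and the $p$-th root gives $\norm{f}_{\bmo_p} \leq C_p^{1/p} \norm{f}_{\bmo}$, which together with the first step establishes both the set equality $\bmo = \bmo_p$ and the norm equivalence $\norm{f}_{\bmo_p} \approx \norm{f}_{\bmo}$.

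The only genuine obstacle is the John--Nirenberg inequality itself, whose proof rests on a Calderón--Zygmund stopping-time decomposition to upgrade the mean-oscillation bound into exponential integrability. Since the statement of the lemma already cites \cite{Auscher2008} for precisely this equivalence, I would treat John--Nirenberg as a black box and present only the elementary Jensen and layer-cake computations above, which are the self-contained part of the argument.
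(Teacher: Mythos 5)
Your proposal is correct: the easy inequality $\norm{f}_{\bmo} \leq \norm{f}_{\bmo_p}$ follows from Jensen's inequality exactly as you write, and the reverse bound via the John--Nirenberg exponential distribution estimate combined with the layer-cake formula is the standard argument (your Gamma-integral computation checks out, giving $\norm{f}_{\bmo_p} \leq C_p^{1/p}\norm{f}_{\bmo}$ with $C_p = c_1 \Gamma(p+1) c_2^{-p}$). The paper itself offers no proof and simply cites \cite[Corollary 5.2.5]{Auscher2008}, which rests on precisely this John--Nirenberg mechanism, so your argument coincides with the one behind the citation; treating John--Nirenberg as a black box is the reasonable place to draw the line.
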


\section{Anti-gradient operator}
\label{app:grad}

We start by recalling some notions on H\"older continuous functions. 

\begin{lemma}[H\"older continuous functions] \label{lem:Hold}
With the $\alpha$-Hölder seminorm 
\begin{align*}
    \abs{u}_{\alpha} \coloneqq \sup_{\substack{x, y \in \mathbb{R}^2 \\ x \neq y}} \frac{\abs{u(x) - u(y)}}{\abs{x - y}^{\alpha}} \quad \alpha\in (0,1),
\end{align*}
    the space $C^\alpha_0(\R^2) \coloneqq \{u \in C(\R^2): \, u(0)=0, \ \abs{u}_\alpha < \infty \}$
 is a Banach space.
\end{lemma}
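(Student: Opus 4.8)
The plan is to verify first that $\abs{\blank}_\alpha$ is genuinely a norm on $C^\alpha_0(\R^2)$ (and not merely a seminorm), and then to establish completeness in the usual way via pointwise limits. Homogeneity and the triangle inequality for $\abs{\blank}_\alpha$ are immediate from the definition of the supremum. The only point that uses the constraint $u(0)=0$ is definiteness: if $\abs{u}_\alpha=0$, then $u$ is constant, and the normalization $u(0)=0$ forces $u\equiv 0$. Thus $(C^\alpha_0(\R^2),\abs{\blank}_\alpha)$ is a normed space, and it remains to prove that it is complete.

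For completeness, I would start from the elementary bound exploiting the normalization at the origin: for any $v\in C^\alpha_0(\R^2)$ and any $x\in\R^2$,
\[
\abs{v(x)} = \abs{v(x)-v(0)} \le \abs{v}_\alpha \abs{x}^\alpha.
\]
Given a Cauchy sequence $\{u_n\}_{n\in\N}$ in $(C^\alpha_0,\abs{\blank}_\alpha)$, applying this to $v=u_n-u_m$ shows that, for each fixed $x$, the real sequence $\{u_n(x)\}_n$ is Cauchy and hence converges to some value $u(x)$; moreover the bound is uniform on every bounded set, so the convergence $u_n\to u$ is locally uniform. Consequently $u$ is continuous and $u(0)=\lim_n u_n(0)=0$.

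The crucial step is to upgrade this to convergence in the Hölder norm and to check $u\in C^\alpha_0$. Fix $\eps>0$ and choose $N$ with $\abs{u_n-u_m}_\alpha\le\eps$ for all $n,m\ge N$; this means that for every pair $x\neq y$,
\[
\frac{\abs{(u_n-u_m)(x)-(u_n-u_m)(y)}}{\abs{x-y}^\alpha}\le\eps.
\]
Here is where I would be careful: with $n\ge N$ and the pair $(x,y)$ held fixed, I let $m\to\infty$ and use the pointwise convergence $u_m\to u$ to pass to the limit inside the single (finite) difference quotient, obtaining
\[
\frac{\abs{(u_n-u)(x)-(u_n-u)(y)}}{\abs{x-y}^\alpha}\le\eps.
\]
Taking the supremum over all $x\neq y$ yields $\abs{u_n-u}_\alpha\le\eps$ for every $n\ge N$. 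In particular $u_n-u\in C^\alpha_0$, whence $u=u_N-(u_N-u)\in C^\alpha_0$, and the same inequality shows $u_n\to u$ in $\abs{\blank}_\alpha$, proving completeness.

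The argument presents no serious obstacle; the one place demanding care is the interchange of the limit $m\to\infty$ with the supremum over the pairs $(x,y)$. I would handle this exactly as above — fixing the pair, passing to the pointwise limit in one difference quotient, and only then taking the supremum — rather than attempting a uniform passage to the limit, since the supremum runs over an unbounded set and need not be attained.
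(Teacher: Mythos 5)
Your proposal is correct and follows essentially the same route as the paper's proof: the pointwise bound $\abs{v(x)}\le\abs{v}_\alpha\abs{x}^\alpha$ coming from the normalization at the origin to get pointwise limits, followed by passing to the limit in each fixed difference quotient before taking the supremum to obtain convergence in the Hölder norm. The only (harmless) cosmetic differences are that you verify the norm axioms and the membership $u\in C^\alpha_0$ slightly more explicitly than the paper does.
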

\begin{proof}
    The fact that the semi-norm $ \abs{u}_{\alpha}$ is a norm in $C^\alpha_0(\R^2)$ is straightforward. To prove that $(C^\alpha_0(\R^2),\abs{u}_{\alpha}) $ is complete, consider a Cauchy sequence $\{ u_n\}_{n\in \mathbb N} \subset C^\alpha_0(\R^2) $. For $x\in \R^2 \setminus \{ 0\}$ we have
    \begin{align*}
        \abs{u_n (x) -u_m(x)} & \leq \abs{x}^\alpha\frac{\abs{u_n (x) -u_m(x)}}{\abs{x}^\alpha} \\ & = \abs{x}^\alpha \frac{\abs{ (u_n  -u_m)(x) - (u_n -u_m)(0)}}{\abs{x}^\alpha} \\ & \leq \abs{x}^\alpha \abs{u_n - u_m}_{\alpha},
    \end{align*}
    where we used the fact that $u_n(0)=0$ for every $n\in \N$. Then, for every $x\in \R^2$, the sequence $\{ u_n(x)\}_{n\in \mathbb N} \subset \R $ is Cauchy, hence converging. Finally, we define 
    \begin{align*}
    u(x) \coloneqq 
\begin{cases}
\lim_{n \to \infty} u_n(x), &  \text{if } x \in \R^2 \setminus \{ 0\},\\
0, & \text{if } x =0.
\end{cases}
\end{align*} 
We have that $u \in C^\alpha_0(\R^2)$. Indeed,
\begin{align*}
     \frac{\abs{u(x) - u(y)}}{\abs{x - y}^{\alpha}} = \lim_{n\to \infty}     \frac{\abs{u_n(x) - u_n(y)}}{\abs{x - y}^{\alpha}} \leq C <\infty,
\end{align*}
where the uniform bound holds  since $\{ u_n\}_{n\in \mathbb N}$ is a Cauchy sequence in  $C^\alpha_0(\R^2) $. To conclude the proof, we have to show $\abs{u_n - u}_{C^\alpha} \rightarrow 0$. For every $\eps>0$, let $N$ be large enough to have $\abs{u_n - u_m}_{C^\alpha} < \eps$ for every $n,m > N$. For $n> N$,  we have
\begin{align*}
    \abs{u_n-u}_{C^\alpha} & \leq \sup_{x\neq y}\frac{\abs{(u_n-u)(x)-(u_n-u)(y)}}{\abs{x-y}^\alpha} \\  & = \sup_{x\neq y} \lim_{m \to \infty}\frac{\abs{(u_n-u_m)(x)-(u_n-u_m)(y)}}{\abs{x-y}^\alpha} \\  & \leq \sup_{x\neq y} \limsup_{m \to \infty} \abs{u_n-u_m}_{C^\alpha}\\  & < \eps,
\end{align*}
which concludes the proof.
\end{proof}

Next, we define the Banach space of curl-free functions in $L^p \cap L^2$, with $p >2$, and prove the existence of a unique linear operator that serves as the inverse of the gradient (in a suitable sense).

\begin{proposition}[Anti-gradient operator]\label{prop:antigradop}
    Let $p>2$ and $\alpha_p = 1-\frac{2}{p}$. Let $G^p$ be the Banach space of curl-free $L^p(\R^2; \R^2) \cap L^2(\R^2; \R^2)$ functions:
    \begin{align*}
    G^p(\R^2) \coloneqq \{ g \in L^p(\R^2;\R^2) \cap L^2(\R^2;\R^2):\, \skal{g, \nabla^\perp \psi}=0  
    \text{ for all } \psi \in C^\infty_c \}.
\end{align*}
    There exists a unique linear operator
    $ \Phi \colon G^p(\R^2;\R^2) \to C^{\alpha_p}_0(\R^2;\R) \cap \bmo(\R^2;\R)$
    such that 
    \begin{align*}
        \Phi(g) \in C^\infty(\R^2) \quad \text{and} \quad\nabla \Phi(g) = g, \qquad \text{for all } g \in  G^p(\R^2) \cap C^\infty(\R^2).
    \end{align*}
    Furthermore, for every $g \in G^p(\R^2)$, we have 
    \begin{align*}
        \skal{\Phi(g), \div \psi} = - \skal{g,\psi}, \qquad \text{for all } \psi \in C^\infty_c(\R^2).
    \end{align*}
\end{proposition}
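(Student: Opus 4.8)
The plan is to build $\Phi$ first on the dense subspace of smooth curl-free fields via the explicit Poincar\'e formula \eqref{eq:antigradientintro}, prove two continuity estimates there, and then extend by density. For $g \in G^p(\R^2) \cap C^\infty(\R^2)$ I would set $\Phi(g)(x) \coloneqq \int_0^1 x \cdot g(\tau x)\dd \tau$. This integral defines a smooth function with $\Phi(g)(0)=0$, and differentiating under the integral sign, then using the curl-free identity $\partial_j g_i = \partial_i g_j$, gives $\partial_j \Phi(g)(x) = \int_0^1 \frac{\d}{\d\tau}\big(\tau\, g_j(\tau x)\big)\dd\tau = g_j(x)$, i.e. $\nabla\Phi(g)=g$. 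Hence $\Phi(g)$ is the unique smooth potential of $g$ normalized by $\Phi(g)(0)=0$, and it is manifestly linear in $g$.

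Next I would establish the two a priori bounds on this subspace. Since $\nabla\Phi(g)=g\in L^p(\R^2)$ with $p>2$, the homogeneous Morrey inequality in dimension two yields $\abs{\Phi(g)}_{\alpha_p}\lesssim \norm{g}_{L^p}$ with $\alpha_p=1-2/p$; combined with $\Phi(g)(0)=0$ this places $\Phi(g)$ in $C^{\alpha_p}_0(\R^2)$. Since also $g\in L^2(\R^2)$, \cref{lem:embbmo} gives $\norm{\Phi(g)}_{\bmo(\R^2)}\lesssim\norm{\nabla\Phi(g)}_{L^2}=\norm{g}_{L^2}$. Thus $\Phi$ is a bounded linear map from $G^p\cap C^\infty$, equipped with the norm $\norm{\cdot}_{L^p}+\norm{\cdot}_{L^2}$, into the Banach space $C^{\alpha_p}_0\cap\bmo$, which is complete by \cref{lem:Hold} together with the completeness of $\bmo$.

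To extend, I would first check that $G^p\cap C^\infty$ is dense in $G^p$: the mollifications $g_\gamma$ are smooth, lie in $L^p\cap L^2$, and stay curl-free because convolution commutes with $\nabla^\perp$, while $g_\gamma\to g$ in $L^p\cap L^2$. The bounded linear extension theorem then produces a unique continuous $\Phi\colon G^p\to C^{\alpha_p}_0\cap\bmo$; concretely $\{\Phi(g_\gamma)\}$ is Cauchy in $C^{\alpha_p}_0$, since $\abs{\Phi(g_\gamma)-\Phi(g_{\gamma'})}_{\alpha_p}\lesssim\norm{g_\gamma-g_{\gamma'}}_{L^p}$, and its limit defines $\Phi(g)$ as a genuine function. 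The delicate point, which I expect to be the main obstacle, is that this limit is taken in the H\"older topology, so the $\bmo$ bound must be transported to the limit separately: $C^{\alpha_p}_0$-convergence implies uniform convergence on each cube, hence every mean oscillation passes to the limit and $\norm{\Phi(g)}_{\bmo}\le\liminf_\gamma\norm{\Phi(g_\gamma)}_{\bmo}\lesssim\norm{g}_{L^2}$, using \cref{lem:bmocubesballs}.

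Finally, for the distributional identity I would integrate by parts in the smooth case to get $\skal{\Phi(g),\div\psi}=-\skal{\nabla\Phi(g),\psi}=-\skal{g,\psi}$ for $\psi\in C^\infty_c$, and then pass to the limit along $g_\gamma$, using uniform convergence of $\Phi(g_\gamma)$ on $\supp\psi$ and $L^2$-convergence of $g_\gamma$; this extends the identity to all $g\in G^p$. Uniqueness follows because any operator with the stated property must coincide with the normalized Poincar\'e potential on the dense smooth subspace, and the continuity furnished by the two estimates forces equality on all of $G^p$; equivalently, two admissible potentials for the same $g$ differ by a function with vanishing distributional gradient that vanishes at the origin, hence they coincide.
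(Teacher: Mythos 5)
Your proposal is correct and follows essentially the same route as the paper's proof: define $\Phi$ on smooth curl-free fields by the Poincaré formula, obtain the $C^{\alpha_p}_0$ bound from Morrey's inequality and the $\bmo$ bound from \cref{lem:embbmo}, extend by density, and derive the distributional identity and uniqueness exactly as you describe. You additionally fill in some details the paper leaves implicit (the computation showing $\nabla\Phi(g)=g$, the mollification argument for density of $G^p\cap C^\infty$, and the transport of the $\bmo$ bound to the limit), all of which are sound.
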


\begin{proof}
    We define $\Phi^r: G^p(\R^2) \cap C^\infty(\R^2) \rightarrow C^\infty(\R^2)$ by setting
    \begin{align*}
        \Phi^r(g)(x) = \int_0^1 g(\tau x) \cdot x \, \mathrm{d}\tau.
    \end{align*}
    Since $\curl g = 0$ for every $g \in G^p \cap C^\infty$, we have $\nabla \Phi^r(g) = g$.
    Furthermore, by Morrey's inequality, we obtain
    \begin{align*}
        \norm{\Phi^r(g)}_{C^{\alpha_p}} \leq C \norm{\nabla \Phi^r(g)}_{L^p} = C \norm{g}_{L^p}.
    \end{align*}
    By \cref{lem:embbmo}, we can also estimate the $\bmo$ norm:
    \begin{align*}
         \norm{\Phi^r(g)}_{\bmo} \leq C \norm{\nabla \Phi^r(g)}_{L^2} = C \norm{g}_{L^2}.
    \end{align*}
    Then $\Phi^r: G^p \cap C^\infty \rightarrow C^{\alpha_p}_0 \cap \bmo$ is a linear and bounded operator between Banach spaces (see \cref{lem:Hold}). Since $G^p(\R^2) \cap C^\infty(\R^2)$ is dense in $G^p(\R^2)$, we can extend uniquely $\Phi^r$ to an operator $\Phi :G^p \rightarrow C^\alpha_0 \cap \bmo$ which satisfies 
    \begin{align*}
        \Phi(g) = \lim_{n \to \infty} \Phi^r(g_n) \quad \text{whenever } \norm{g_n- g}_{L^p} \rightarrow 0.
    \end{align*}
    Let $g \in G^p$ and $\{g_n\}_{n\in \N} \subset G^p \cap C^\infty$ such that $\norm{g_n - g}_{L^p} \rightarrow 0.$ Then, for every $\psi \in C^\infty_c(\R^2)$, we have
    \begin{align*}
        \skal{\Phi(g), \div \psi} = \lim_{n \to \infty} \skal{\Phi^r(g_n), \div \psi} =
        - \lim_{n \to \infty} \skal{\nabla \Phi^r(g_n), \psi} = 
        - \lim_{n \to \infty} \skal{g_n, \psi} = -\skal{g,\psi}.
    \end{align*}
    It remains to prove the uniqueness of the operator. To this end, we let $\Tilde{\Phi} \colon
    G^p \to C^\alpha_0 \cap \bmo$ be another operator with the same properties as $\Phi$. For every $g \in G^p \cap C^\infty$, we have
    \begin{align*}
        \nabla(\Tilde{\Phi}(g) - \Phi(g)) = g - g =0.
    \end{align*}
    Hence, there exists a constant $c \in \R$ such that $\Tilde{\Phi}(g) = \Phi(g) + c$. From 
    this, we deduce
    \begin{align*}
        0 = \Tilde{\Phi}(g)(0) = \Phi(g)(0) + c = c,
    \end{align*}
    and, therefore, we have $\Tilde{\Phi}(g) = \Phi(g)$ for every $g \in G^p \cap C^\infty$. 
    Since the extension to $G^p$ is unique, we conclude that $\Phi = \Tilde{\Phi}$.
\end{proof}

\vspace{0.5cm}
\section*{Acknowledgments}

T.~Crin-Barat is supported by the project ANR-24-CE40-3260,  ``Hyperbolic Equations, Approximations \& Dynamics'' (HEAD). 

N.~De~Nitti is a member of the Gruppo Nazionale per l'Analisi Matematica, la Probabilità e le loro Applicazioni (GNAMPA) of the Istituto Nazionale di Alta Matematica (INdAM). He has been partially supported by the Swiss State Secretariat for Education, Research and Innovation (SERI) under contract number MB22.00034 through the project TENSE.

S.~\v{S}kondri\'c and A.~Violini are supported  by the Deutsche Forschungsgemeinschaft (DFG) via the project ``Inhomogeneous and compressible fluids: statistical solutions and dissipative anomalies'' within the SPP 2410 ``Hyperbolic Balance Laws in Fluid Mechanics: Complexity, Scales, Randomness'' (CoScaRa).

This work was partially carried out during A.~Violini's visit to the Chair of Analysis at the Friedrich-Alexander-Universität Erlangen-N\"urnberg (FAU).

We thank G.~Crippa, R.~Danchin, L.~Galeati, and E.~Wiedemann for some helpful conversations.

\vspace{0.5cm}

\printbibliography

\vfill 

\end{document}